\newcommand\redout{\bgroup\markoverwith
{\textcolor{red}{\rule[.5ex]{2pt}{0.4pt}}}\ULon}
\newtheorem{theorem}{Theorem}[section]
\newtheorem*{theorem*}{Theorem}
\newtheorem{proposition}[theorem]{Proposition}
\newtheorem{lemma}[theorem]{Lemma}
\theoremstyle{definition}
\theoremstyle{remark}
\numberwithin{equation}{section}
\newcommand{\dis}{\displaystyle}
\newcommand{\R}{\mathbb{R}}
\newcommand{\M}{\mathcal{M}}
\newcommand{\dt}{\partial_t}
\newcommand{\dx}{\partial_x}
\newcommand{\drho}{\partial_\rho}
\newcommand{\dthe}{\partial_\theta}
\newcommand{\va}{{\varphi}}
\newcommand{\eps}{\varepsilon}
\newcommand{\vsi}{\varsigma}
\title[The Isometric Immersion of  Surfaces]
{
The Isometric Immersion of Negatively Curved Surfaces with Finite Total Curvature}
\author[W. Cao, Q. Han, F. Huang and D. Wang]
{Wentao Cao \and Qing Han \and Feimin Huang \and Dehua Wang}
\address{Academy for Multidisciplinary Studies, Capital Normal University, Beijing, 100048, P.R. China.} 
\email{cwtmath@cnu.edu.cn}
\address{Department of Mathematics,
University of Notre Dame, Notre Dame, IN 46556, USA.}
\email{qing.han.7@nd.edu}
\address{Institute of Applied Mathematics,
   AMSS, Chinese Academy of Sciences, Beijing 100190, China.}
\email{fhuang@amt.ac.cn}
\address{Department of Mathematics, University of Pittsburgh, Pittsburgh, PA 15260, USA.}
\email{dwang@math.pitt.edu}
\begin{document}

\date{}

\begin{abstract}
	
In this paper, we study the smooth isometric immersion of a complete, simply connected surface with a negative Gauss curvature into the three-dimensional Euclidean space. 
A fundamental and longstanding problem  is to find a sufficient condition for a complete negatively curved surface to be isometrically embedded in $\R^3$ \cite{Yau}. 
It can be described as an initial and/or boundary value problem for a hyperbolic system of nonlinear partial differential equations derived from the Gauss-Codazzi equations. The mathematical theory associated with this system is largely incomplete. The global smooth isometric immersion has been proven in the literature when the Gauss curvature decays rapidly and monotonically. However, when the Gauss curvature oscillates or decays slowly, the problem becomes much more challenging and little is known. 

 In our paper, we find a sufficient condition, consisting of a finite total Gauss curvature and appropriate oscillations of the Gauss curvature. Under this condition we prove the global existence of a smooth solution to the Gauss-Codazzi system, achieving a global smooth isometric immersion of the surface into the three-dimensional Euclidean space. Furthermore, we show that the finite total Gauss curvature is necessary for the existence of a solution in a special case of the Gauss-Codazzi system.

New techniques are developed to overcome the difficulties posed by the slow decay and oscillations of the Gauss curvature. By observing that certain combinations of the Riemann invariants decay faster than others, we reformulate the Gauss-Codazzi equations as a symmetric hyperbolic system and uncover a crucial structure of partial dampings. These partial dampings, along with the finite total curvature and appropriate oscillations of the Gauss curvature, enable us to obtain a global smooth solution through delicate analysis, and consequently establish a global smooth isometric immersion of such surfaces.

\end{abstract}

\keywords {Global smooth isometric immersion, negative curvature, Gauss-Codazzi system, total curvature}

\subjclass[2020]{35L65, 53C42, 53C21, 53C45}

\maketitle
\tableofcontents

%%%%%%%%%%%%%%%%%%%%%%%%%%%%%%%%%%%%%%%%%%%%%%%%%%
\section{Introduction}
%%%%%%%%%%%%%%%%%%%%%%%%%%%%%%%%%%%%%%%%%%%%%%%%%%%%%%%%%%%%%%%%%
%

In this paper, we study the global smooth isometric immersion in the three-dimensional Euclidean space $\R^3$
of two-dimensional complete simply connected Riemannian manifolds (or surfaces) with negative Gauss curvatures.  The classical theory of surfaces asserts that the isometric immersion of such surfaces exists if the Gauss-Codazzi system is satisfied (cf.  \cite{BS, Carmo, HH, PS, Mainardi}).
Let $(\M, g)$ be a surface with the metric
$$g=\sum_{i,j=1}^2g_{ij}dx_idx_j.$$
Assume that $(\M, g)$ is isometrically immersed in $\mathbb R^3$
and its second fundamental form is given by
\begin{equation*}
 I\!\!I= Ldx_1^2+2Mdx_1dx_2+Ndx_2^2.
\end{equation*}
Then, the Gauss-Codazzi system is of the following form:
\begin{equation}\label{e:GC}
\begin{split}
& \partial_{2}L-\partial_{1}M=\Gamma^1_{12}L+(\Gamma^2_{12}-\Gamma^1_{11})M-\Gamma^2_{11}N,\\
 &\partial_{2}M-\partial_{1}N=\Gamma^1_{22}L+(\Gamma^2_{22}-\Gamma^1_{21})M-\Gamma^2_{21}N,
 \end{split}
 \end{equation}
with
 \begin{equation}\label{e:Gauss}
 LN-M^2=K(g_{11}g_{22}-g_{12}^2),
\end{equation}
where $\partial_i$ denotes $\partial_{x_i},$
$\Gamma^i_{jk}\, (i,j,k=1,2)$ are the Christoffel symbols of the given metric $g$,  and $K$ is the Gauss curvature.
To establish the existence of global isometric immersion of a surface, we need to find a global solution to the above Gauss-Codazzi system.

The isometric embedding or immersion of Riemannian manifolds is a well-known fundamental problem in differential geometry.
For general $n$-dimensional Riemannian manifolds, Janet \cite{Janet} and Cartan \cite{Cartan} proved
the local analytic isometric embedding in $\R^{n_*}$, with $n_*$ denoting the Janet dimension $n(n+1)/2$.
Nash \cite{nash1956} established the smooth global isometric embedding of smooth $n$-dimensional manifold
in $\R^m$ for sufficiently large $m$. Gromov \cite{gromov70} and G\"unther \cite{Gun89, Gun90} improved the target dimension.
In fact, G\"unther provided an alternative proof
and improved the target dimension to $m\geq\max\{n_*+2n, n_*+n+5\}$. % Gromov \cite{gromov70}
On the other hand, Nash \cite{nash1954} and Kuiper \cite{Kui55} obtained the flexibility of
$C^1$ isometric embedding in $\R^m(m\geq n+1)$, which yields $C^1$ isometric embeddings
of $n$-dimensional Riemannian manifold into $\R^{2n}$ with Whitney's strong embedding theorem (cf.\cite{Wh44}).
The Nash-Kuiper theorem also holds for $C^{1,\theta}$ isometric embeddings with $\theta<(1+n+n^2)^{-1}$
for $n\geq 3$ (cf. \cite{CS19, CDS12}) and with $\theta<1/5$ for $n=2$ (cf. \cite{DIS15, CS19}).

It is a challenging problem to find smooth or sufficiently smooth local isometric embeddings
of $n$-dimensional Riemannian manifolds in the Euclidean spaces with the Janet critical dimension $n(n+1)/2$.
This is the Schl\"afli conjecture (cf.\cite{HH}).
Regarding the local and global smooth isometric immersion in the case of $n=2$,
the book by Han-Hong \cite{HH} provides an extensive review and discussions.
For $n=2$, the Gauss-Codazzi system for the isometric immersion of surfaces in $\R^3$ is elliptic for the positive curvature $K>0$,
is hyperbolic for the negative curvature $K<0$, and is of mixed type for the Gauss curvature changing signs.
For the curvature $K\ge 0$, there have been many results in the literature on local and global smooth embedding, and we 
refer the reader to \cite{Han, hhl, HK2, hw, Lin, Guan, gl, he, hz, n, w}, the book \cite{HH}, and their references.
When the curvature changes signs, sufficiently smooth local isometric embeddings of surfaces are obtained
in \cite{Lin0, Han, Khuri, HK1, Dong} with additional conditions on $\nabla K$.
When the curvature is negative, the smooth isometric immersion was studied in \cite{H, HH, CHW-smooth}, and
the $C^{1, 1}$ isometric immersion was obtained in \cite{CHW, CHW1, CSW, Christoforou, cs, LiS2020}.
The isometric immersion of surfaces with low regularity was also investigated in \cite{M1, sz, Chen-Li2018}.
For $n=3$ or $n=4$, the local smooth isometric immersion was studied in \cite{bgy, GY, NM, NM2, Poole, CCSWY1}.

 %%%%%%%%%%%%%%%%%%%%

This paper  focuses  on the {\em global smooth} isometric immersion in $\R^3$
of complete negatively curved surfaces, i.e., complete two-dimensional Riemannian manifolds with negative Gauss curvatures.
An example of such surfaces is given by the hyperbolic plane, with the Gauss curvature being $-1$.
The first relevant result was due to Hilbert \cite{hi},
who proved that the hyperbolic plane does not admit smooth isometric immersions in $\mathbb R^3$.
Efimov  \cite{Efimov1, Efimov2} generalized this result to various classes of complete negatively curved surfaces,
and found different conditions on the Gauss curvature under which no isometric immersions in $\R^3$ exist. More precisely, in \cite{Efimov1} and  \cite{Efimov2}, respectively, Efimov showed that there is no $C^3$ isometric immersion in $\R^3$ if the Gauss curvature is bounded above by a negative constant, or if the Gauss curvature $K$ is negative and satisfies
\begin{equation}\label{eq-Efimov}\sup |K|, \, \sup|\nabla |K|^{-1/2}|<\infty.\end{equation}
This condition allows $|K|$ to decay at the inverse quadratic order of the geodesic distance at infinity.
Specifically, if
$K=-\rho^{-2}$ for all large $\rho$ in some geodesic polar coordinates $(\theta, \rho)$ on $(\mathcal{M}, {g})$, %for all large $\rho$,
then \eqref{eq-Efimov} is satisfied and hence
there is no $C^3$ isometric immersion.

Concerning the existence of isometric immersions of
the negatively curved surfaces, Yau  \cite{Yau} proposed the following problem: %``
{\it Find a sufficient condition for a complete negative curved surface to be isometrically embedded in $\R^3$}.  He indicated that reasonable conditions should involve decay rates of the Gauss curvature at infinity.
Obviously, such decay rates should be faster than the inverse square of the geodesic distance at infinity.
Along this direction, Hong \cite{H} gave an affirmative answer and obtained the following result.

\smallskip
\smallskip

\noindent
{\bf Theorem A \cite{H}}.
{\it
Let $(\mathcal{M}, {g})$ be a smooth complete simply connected 2-dimensional surface with a negative Gauss curvature
$K$. %=-k^2(\theta, \rho).$
Assume that in some geodesic polar coordinates $(\theta, \rho)$ on $(\mathcal{M}, {g})$,
$K$ satisfies, for some constant $\gamma>0$,
\begin{equation}\label{c1}
\partial_\rho\log(|K|\rho^{2+\gamma})\leq0 \text{ for large } \rho,
\end{equation}
and
\begin{equation}\label{eq-a-hong}
	\partial_\theta^i\log |K|~(i=1,2), ~~\rho\partial_\theta\partial_\rho\log |K| \text{ are bounded.}
	\end{equation}
Then, $(\mathcal{M}, {g})$ has a smooth isometric immersion in $\mathbb{R}^3.$}
%\end{theorem*}
\smallskip

The condition (\ref{c1}) requires that the Gauss curvature $K$ itself should be decreasing along all geodesic rays
and do not allow oscillating factors.
It is a longstanding  open problem {\em whether the decay assumption \eqref{c1} can be relaxed and oscillations can be permitted in the Gauss curvature.}

\subsection{The main result}
In this paper, we will prove the existence of global isometric immersions of surfaces in $\R^3$ with a finite total Gauss curvature
and appropriate
oscillations of the Gauss curvature.
More precisely, we have the following theorem.

\begin{theorem}
\label{thrm-main1}
Let ($\mathcal{M},g$) be a smooth complete simply connected surface with a negative Gauss curvature $K$ and
\begin{equation}\label{e:inte-cond}
	\int_{\mathcal{M}}|K|dA_g<\infty,
\end{equation}
where $dA_g$ is the area element of $g$.
Assume that in some geodesic polar coordinates $(\theta, \rho)$ on $(\mathcal{M}, g)$,
$K$ has the decomposition
\begin{equation}\label{eq-decomposition}
\rho^{2+\gamma}|K|(\theta, \rho)=\overline{K}(\rho)a^2(\theta, \rho)\quad\text{for $\rho$ large},
\end{equation}
where $\gamma\in(0, 1)$ is a constant and $\overline{K}$ and $a$ are positive functions such that
\begin{equation}\label{eq-K-bar-bound}
\overline{K}(\rho)\text{ is monotone for $\rho$ large},
\end{equation}
and
\begin{align}
a,\, a^{-1}, \,\partial^i_\theta\log a, \, &\rho\partial_\theta^i\partial_\rho\log a \text{ are bound for }i=1, 2, 3,\label{eq-a-bul}\\
&\int_{1}^\infty\displaystyle\max_{\theta}|\partial_\rho a|d\rho<\infty.\label{eq-a-bv}
\end{align}
Then, $(\mathcal{M}, g)$ admits a smooth isometric immersion in $\mathbb{R}^3.$
\end{theorem}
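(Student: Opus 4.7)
The plan is to work in the global geodesic polar coordinates $(\theta,\rho)$ supplied by completeness and simple connectedness, in which the metric takes the normal form $g=d\rho^2+G^2(\theta,\rho)d\theta^2$ with $\partial_\rho^2G+KG=0$. In these coordinates the Gauss-Codazzi system \eqref{e:GC}--\eqref{e:Gauss} reduces to a first-order nonlinear system for the second fundamental form $(L,M,N)$; after using the Gauss equation \eqref{e:Gauss} to eliminate the algebraic constraint $LN-M^2=KG^2$ and renormalizing the components by suitable powers of $G$ and $\sqrt{|K|}$, I would obtain a $2\times 2$ strictly hyperbolic system for two scaled unknowns in which the wave speeds are bounded and the coefficients are regular away from $\rho=0$. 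Passing to Riemann invariants $r,s$ diagonalizes the principal part and gives two transport-type equations of the schematic form
\begin{equation*}
\partial_+r=\Lambda_1r+\Lambda_2s+\Phi,\qquad \partial_-s=\Lambda_3r+\Lambda_4s+\Psi,
\end{equation*}
where the diagonal coefficients $\Lambda_i$ are determined by $\partial_\rho\log(|K|G^2)$ and the inhomogeneities collect lower-order couplings.

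The central structural observation to exploit is the one highlighted in the abstract: the decomposition \eqref{eq-decomposition} separates the radial profile $\overline K$ from the angular modulation $a$, and one expects a specific symmetric combination of $r$ and $s$ to decay strictly faster than $r$ and $s$ individually, producing a partial damping of order $-(1+\gamma/2)/\rho$ on the ``good'' direction while the other direction is only coupled to it. Under the monotonicity \eqref{eq-K-bar-bound} of $\overline K$ and the bounds \eqref{eq-a-bul} on angular and mixed derivatives of $a$, one can symmetrize the system in a way that concentrates the damping in this good combination and keeps the coupling terms bounded. The finite total curvature hypothesis \eqref{e:inte-cond} manifests itself as an $L^1$ bound on $G|K|$ in $(\theta,\rho)$ and thus controls the integrated sources along each characteristic, while the BV condition \eqref{eq-a-bv} supplies precisely the $L^1$ control of oscillations of the modulation $a$ along $\rho$-rays that is needed to prevent a secular drift of the characteristics.

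With such a priori estimates in hand I would close the proof by a continuity/approximation argument: using $2\pi$-periodicity in $\theta$, solve the renormalized hyperbolic system on expanding geodesic annuli $\{\rho_0\le\rho\le R\}$ via the classical local theory for strictly hyperbolic systems with smooth data coming from a suitable initial hypersurface, and pass to $R\to\infty$ using the uniform weighted bounds. Higher regularity follows by differentiating the system up to third order in $\theta$ and applying the bounds \eqref{eq-a-bul}. Once $(L,M,N)$ is solved globally, the smooth isometric immersion in $\mathbb R^3$ is produced by integrating the Gauss-Weingarten equations, as in the classical fundamental theorem of surfaces. The main obstacle, and the reason the result goes beyond Theorem~A, is that the decay $|K|\sim\rho^{-2-\gamma}$ with $\gamma\in(0,1)$ is borderline and $a(\theta,\rho)$ is allowed to oscillate, so the simple Grönwall argument available under the monotone decay assumption \eqref{c1} no longer closes; one must delicately track cancellations between the two Riemann invariants, rely on the partial damping of the faster-decaying combination, and use \eqref{eq-a-bv} precisely at the points where oscillation would otherwise drive unbounded growth along the characteristics.
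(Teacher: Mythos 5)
Your high-level reading of the mechanism (reformulating in Riemann invariants, exploiting that a particular combination decays faster and carries a damping of order $\rho^{-1}$, using \eqref{e:inte-cond} for integrability of the sources and \eqref{eq-a-bv} to control oscillation along $\rho$) matches the paper's strategy, but the proposal has two genuine gaps that the actual proof spends most of its length closing. First, you cannot work ``in the global geodesic polar coordinates'': the metric coefficient satisfies $G(\theta,0)=0$, so $\partial_\rho\log G$ is singular at the origin and the transport equations for the Riemann invariants degenerate there. The paper is forced to solve the Gauss--Codazzi system near the origin in geodesic coordinates $(x,t)$, with a very carefully constructed, rapidly decaying initial datum $\phi$, and then transfer the resulting solution to the boundary $\partial\tilde\Omega_2$ of the polar-coordinate region. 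Your ``suitable initial hypersurface'' with ``smooth data'' leaves unanswered where the data come from and, more importantly, why they are small in the weighted norms: the whole continuity argument hinges on the boundary data being of size $\eps(R)=C\max\{R^{-\delta/2},k_*(2R)R\}$, which can be made arbitrarily small by pushing $R$ out, and this smallness is inherited from the decay of $\phi$ and from quantitative estimates of the transition function $\Phi$ and of $B$, $\partial_\theta\log B$ in the two coordinate systems (Sections 3--6 of the paper). Without this, there is no small parameter to run the bootstrap on.

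Second, the damping structure is not as uniform as you describe. The paper must split into two cases according to the monotonicity of $\overline K$, choosing different normalizations $u=G^\alpha k^{-\beta}(w+z)$, $v=G^\alpha k^{-\beta}(z-w)$ with $(\alpha,\beta)=(0,1)$ or $(1,0)$. In the case where $\overline K$ is increasing, the equation for $v$ has \emph{no} damping at all and the coefficients of the nonlinear terms are not integrable in $\rho$; one cannot simply ``keep the coupling terms bounded'' and apply Gr\"onwall. The closure requires first proving a decay rate for $\|u\|_{H^1}$ using the damping in the $u$-equation alone (under an a priori $H^2$ bound), then feeding that decay into the $v$-equation to bound $\|v\|_{H^1}$, and finally using the damping in the equation for $\partial_\theta^2 u$ together with the symmetry of the quadratic terms to close at the $H^2$ level. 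A related omission: strict hyperbolicity (equivalently $z>w$, i.e.\ $v>0$) must be propagated, which the paper does by exhibiting the $v$-equation in the form $\partial_\rho v+\tfrac12G^{-\alpha}k^\beta u\,\partial_\theta v+\mathcal Fv=0$ and integrating along characteristics. These are not routine details but the core of why the theorem goes beyond Theorem A, so the proposal as written does not yet constitute a proof.
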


The decomposition \eqref{eq-decomposition} was motivated by \cite{ac2005, dd01, dr97, ol70, han10}.
The functions $\overline{K}$ and $a$ are referred to {\it a monotone factor} (increasing or decreasing)
%{\it monotone factor} (increasing or decreasing)
and an {\it oscillating factor} (bounded from below and above), respectively. Set
\begin{equation}\label{eq-K-star}K_*(\rho)=\rho^{-2-\gamma}\overline{K}(\rho).\end{equation}
Then,  $K_*(\rho)$ is the {\it decay factor} for the curvature $K$.
By \eqref{eq-decomposition} and \eqref{eq-K-star}, we have
\begin{equation}\label{eq-decomposition-alter}
|K|(\theta, \rho)={K}_*(\rho)a^2(\theta, \rho)\quad\text{for $\rho$ large},
\end{equation}
To understand Theorem \ref{thrm-main1}, we make some remarks on assumptions \eqref{e:inte-cond}-\eqref{eq-a-bv}.

%\begin{remark}
First, the {\it finite total curvature} condition in \eqref{e:inte-cond} is independent of coordinates.
This condition was suggested by Hong \cite{h-private}.
In polar coordinates as in Theorem \ref{thrm-main1},
\eqref{e:inte-cond} can be reformulated as an integrability condition for the decay factor $K_*$ in the form
\begin{equation}\label{e:inte-polar}
	\int_0^\infty \rho K_*(\rho)d\rho<\infty.
\end{equation}
This indicates that the inverse square decay of $K_*(\rho)$ as $\rho\to\infty$ is the borderline case for \eqref{e:inte-polar}.
In fact, according to Lemma \ref{l:G-property} below, \eqref{e:inte-polar} implies
$$\lim_{\rho\to\infty}\rho^2K_*(\rho)=0.$$

Second, in \eqref{eq-decomposition}, we allow $\overline{K}(\rho)=\rho^{2+\gamma}{K}_*(\rho)$
to be either decreasing or increasing.
If we ignore the oscillating factor $a$ in \eqref{eq-decomposition}, then \eqref{c1} reduces to
$\partial_\rho(\rho^{2+\gamma}{K}_*(\rho))\le 0$. However, we also allow $\rho^{2+\gamma}{K}_*(\rho)$ to be increasing.
For example, for a fixed $\gamma_0>0$, consider
\begin{equation}\label{c2-1}K_*(\rho)=\frac{1}{\rho^2(\log\rho)^{2+\gamma_0}}\quad\text{for }\rho\ge 2.\end{equation}
Then, $K_*$ satisfies \eqref{e:inte-polar} and, for any $\gamma>0$,
$\rho^{2+\gamma}{K}_*(\rho)$ is increasing for large $\rho$.

It is easy to construct examples that are excluded by Theorem A but included in Theorem \ref{thrm-main1}.
In fact, $K$ does not satisfy \eqref{c1} if for any $\gamma>0$,
$\rho^{2+\gamma}|K|$ is strictly increasing or changes its sign infinitely many times as $\rho\to\infty$ for a fixed $\theta$.
For example, define
\begin{equation*}%\label{c2-2}
a(\rho)=\exp\left\{1+\int_1^\rho\alpha(s)ds\right\} \quad \text{for $\rho\geq2$,}\end{equation*}
where
$$\alpha(\rho) \text{ is  the standard smooth mollification of  the function } \sum_{n=1}^\infty(-1)^n{\bf1}_{[n,\,n+n^{-2}]}.
$$
Consider, for a fixed $\gamma_0>0$,
$$K=-\frac{a^2}{\rho^2(\log\rho)^{2+\gamma_0}}\quad\text{for }\rho\ge 2.$$
It is easy to verify that \eqref{e:inte-cond}-\eqref{eq-a-bv} are satisfied.

However, for the corresponding $K$ given by \eqref{eq-decomposition}, it is straightforward to check that,
for any constant $\gamma>0$, the derivative
	\begin{equation*}%\label{eq-verify}
\drho\log(\rho^{2+\gamma}|K|)=\frac{\gamma}{\rho}-\frac{2+\gamma_0}{\rho\log\rho}+2\alpha(\rho)
	\end{equation*}
changes its sign infinitely many times as $\rho\to \infty$ and thus violates \eqref{c1}.
In addition,  $|K|$ also oscillates as $\rho\to \infty$. % with $\delta_*=-2$.

Lastly, the condition \eqref{eq-a-bul} is parallel to \eqref{eq-a-hong} in Theorem A
but is imposed on the oscillating factor $a$.
We point out that \eqref{eq-a-bv} is a technical assumption closely related to the method employed to prove Theorem \ref{thrm-main1}. We will solve a symmetric quasilinear hyperbolic system
and prove the long-time existence of solutions.
With \eqref{eq-a-bv}, we will prove that solutions remain \lq\lq small\rq\rq\
if the initial values are \lq\lq small\rq\rq.
If  \eqref{eq-a-bv} is violated, solutions may become infinite near infinity even for small initial values
and linear equations with a similar structure  (cf.\cite{CS-acta-82}).

\subsection{Strategy and novelties of the proof of Theorem \ref{thrm-main1}}\label{s-strategy}
Since the monotonicity assumption in \eqref{eq-K-bar-bound} is given in the geodesic polar coordinates $(\theta,\rho)$ for large $\rho$,
it is natural to study the  Gauss-Codazzi system in $(\theta,\rho)$.
Unfortunately, there exists a singularity at the origin for the polar coordinates $(\theta,\rho)$. 
Similarly to \cite{H, HH}, we solve the Gauss-Codazzi system in a domain including the origin in the geodesic coordinates $(x,t)$
and in its complement in the geodesic polar coordinates $(\theta,\rho)$.
We need to analyze the coordinate transform carefully in order to patch solutions from two regions to form a solution in $\mathbb R^2$.

The crucial  part of the proof  is to prove the global existence of the Gauss-Codazzi system \eqref{e:GC}-\eqref{e:Gauss}
in a domain excluding the origin 
in the geodesic polar coordinates $(\theta, \rho)$, as stated in Proposition \ref{p:uv}.
The novelties for the proof of Proposition \ref{p:uv}  are outlined as follows.

In the proof of Theorem A of Hong \cite{H}, the comparison principle was essentially used to solve the Gauss-Codazzi system,
and a key step is to construct suitable dominating functions under the assumption \eqref{c1}.

{ However, due to the presence of the oscillating factor and slow decay allowed by \eqref{e:inte-polar}, it becomes challenging to construct suitable dominating functions. 
Therefore, instead of relying on the comparison principle, we resort to the energy method to study the Gauss-Codazzi system.}

To study the asymptotic behavior of solutions, we consider a special case of the system \eqref{e:GC}-\eqref{e:Gauss}
that the metric and Riemann invariants depend only on one variable $\rho$,
which is a system of ordinary differential equations, and its solutions can be obtained explicitly.
Such explicit expressions reveal an interesting phenomenon that the Riemann invariants decay at the same order as $\rho\to\infty$
but their certain linear combinations may decay faster.
We prove Proposition \ref{p:uv} by solving an equivalent differential system to the Gauss-Codazzi system,
which is given by \eqref{eq-u-general}-\eqref{eq-v-general} for specific linear combinations (denoted by $u$ and $v$) of the Riemann invariants,  and establish different decay rates.

The reformulated system \eqref{eq-u-general}-\eqref{eq-v-general} has the following advantages.  
(i) It is a symmetric hyperbolic system, which allows us to employ energy methods to derive {\it a priori} estimates of its solutions $(u,v)$;
(ii)  it has a partial damping effect so that some terms with non-integrable coefficients can be controlled;
(iii)  the positivity of $v$ can be directly justified and the equivalence between the two systems
   \eqref{eq-u-general}-\eqref{eq-v-general} and \eqref{e:GC}-\eqref{e:Gauss} is then established.

In the derivation of the {\it a priori} estimates for the case that $\rho^{2+\gamma}{K}_*(\rho)$ is increasing,
there are two major difficulties. Specifically, $v$ has no damping and the coefficients of the nonlinear terms are not integrable.
To overcome the difficulties, we  apply an energy method {and a continuity argument}. Rather than estimating $\|u\|_{H^1}$ and $\|v\|_{H^1}$ synchronously as usual, we first fully use the damping effect of $u$ to obtain a decay estimate of $\|u\|_{H^1}$ { if $\|(u, v)\|_{H^2}$ has  an {\it a priori}   bound}.   %a weaker bound}.  
Then, we estimate $\|v\|_{H^1}$ directly from the decay estimate of $\|u\|_{H^1}$.  Finally, we take advantage of the decay estimate of $\|u\|_{H^1}$ already established
and the damping term in the equation for $\partial^2_\theta u$ to control all nonlinear terms in the equations so that
{ the {\it a priori} bound can be improved.}

It is relatively easier to derive {\it a priori} estimates for the case that $\rho^{2+\gamma}{K}_*(\rho)$ is decreasing.
One major difficulty originates from the fact that the coefficients of the nonlinear terms are not integrable.
However, both equations for $u$ and $v$ have damping effects.
We can derive decay estimate of $\|(u, v)\|_{L^2}$  { provided that $\|(u, v)\|_{H^2}$ has  an {\it a priori} bound}.
Then, we apply the decay estimates already obtained for $\|(u, v)\|_{L^2}$
and the symmetry of the system to establish the desired decay estimates of $\|\partial_\theta(u, v)\|_{H^1}$.

To prove Proposition \ref{p:uv}, we require Lemma \ref{l:derivative-boundary}, which can be derived from Lemmas \ref{l:boundary-bounds}-\ref{l:boundary-bounds-1}. The key issue to get Lemmas \ref{l:boundary-bounds}-\ref{l:boundary-bounds-1} is to estimate the $C^2$-norms of the coefficients of the metric in the geodesic coordinates and the function $\Phi$ originated from the coordinate transformations  as in

Lemmas \ref{l:bphi-under-h1-h2}-\ref{l:dthe-drho}.
Due to the singularity of $\Phi$ at $x=0$ (see \eqref{rhot}),
we prove Lemmas \ref{l:bphi-under-h1-h2}-\ref{l:bphi-under-h1-h2-1}
in two regions $|x|\leq c$ and $|x|\geq c$ and derive some fine estimates under the conditions of Theorem \ref{thrm-main1}  through an iteration for $\dt B$ as in \cite{H}.

\subsection{Organization of the paper}
The rest of the paper is organized as follows.
In Section \ref{s:formula}, 	we reformulate the Gauss-Codazzi system as an equivalent symmetric quasilinear hyperbolic system with partial dampings.
In Section \ref{s-near}, we present the existence of solutions in geodesic coordinates in a region $\Omega_1$ containing the origin.
In Section \ref{s-property}, we derive some properties of metrics and curvatures under the assumptions \eqref{e:inte-cond}-\eqref{eq-a-bul}.
In Section \ref{s-transform}, we study the coordinate transformation $F$ from geodesic coordinates to geodesic polar coordinates. % for the preparation of estimates for initial boundary values on $\partial F(\Omega_1)$.
In Section \ref{s-boundary}, we establish estimates of initial boundary values on $\partial F(\Omega_1)$. 
In Section \ref{s-local}, we prove the local existence of solutions in Sobolev spaces in geodesic polar coordinates in the region $\tilde\Omega_2=F(\R^2_+\setminus\Omega_1)$. % away from the origin.
In Section \ref{s-global}, we employ an energy method and make use of the partial damping effect to prove the global existence of solutions in geodesic polar coordinates in $\tilde\Omega_2$.

%%%%%%%%%%%%%%%%%%%%%%%%%%%%%%%%%%%%%%%%%%%%%%%%%%
\section%{Reformulation and a Special Solution}
{Preliminaries}
\label{s:formula}
%%%%%%%%%%%%%%%%%%%%%%%%%%%%%%%%%%%%%%%%%%%%%%%%%%%%%%%%%%%%%%%%%
%

In this section, we reformulate the Gauss-Codazzi system in the geodesic coordinates and geodesic polar coordinates
and derive an equivalent system for establishing the {\em a priori} estimates. %we shall study in the rest of the paper. ???
Throughout the paper, we assume $(\M, g)$ is a complete simply connected smooth surface with Gauss curvature $K<0$.

\subsection{The Gauss-Codazzi system with negative Gauss curvature}

According to Lemma 10.2.1 in \cite{HH}, there exists a global geodesic coordinate system $(x,t)$
in $\M$.
In this coordinate system, the curvature is written as $K=-\kappa^2(x, t),$ with $\kappa(x, t)$ being a positive function on $\mathcal{M}$. The metric $g$ is then of the form
\begin{equation}\label{e:metric}
g=B^2(x, t)dx^2+dt^2,
\end{equation}  where $B$ is a positive function satisfying
\begin{equation}\label{e:gauss-b}
\begin{cases}
\dis \dt^2B=\kappa^2B,\\
\dis B(x,0)=1,\, \dt B(x,0)=0.
\end{cases}
\end{equation}
The Christoffel symbols  are then given by
\begin{equation*}
\Gamma^1_{11}=\partial_x\log B, \,\, \Gamma^2_{11}=-B\dt B,\,\, \Gamma^1_{12}=\partial_t\log B, \,\,
\Gamma^1_{22}=\Gamma^2_{12}=\Gamma^2_{22}=0.
\end{equation*}
Then, the Gauss-Codazzi system \eqref{e:GC}-\eqref{e:Gauss} is reduced to
\begin{equation}\label{gc}
	\begin{split}
\partial_tL-\partial_xM&=L\partial_t\log B-M\partial_x\log
B+NB\partial_tB,\\
\partial_tM-\partial_xN&=-M\partial_t\log B,
\end{split}
\end{equation}
and
\begin{align}\label{gs}
LN-M^2=-\kappa^2B^2.
\end{align}
For $K<0$, the Gauss-Codazzi system is of hyperbolic type, and the second fundamental form $I\!\!I=0$ generates two families of asymptotic
curves:
$$\frac {dx}{dt}=\frac {-M\pm B\kappa}L.$$
The two quantities on the above right-hand side are often referred to as the
Riemann invariants (cf. \cite{Smoller}).

To seek smooth solutions of a hyperbolic system, we usually consider the equations for the Riemann invariants $r$ and $s$ defined as follows:

\begin{equation}\label{1.4.1}
r=\frac {-M-B\kappa}L,\quad s=\frac
{-M+B\kappa}L.
\end{equation}
By a direct computation, we obtain
\begin{align}\label{e:rs}
\begin{split}
\partial_tr+s\partial_xr&=\frac{r-s}2(\partial_t+r\partial_x)\log \kappa
-(r+s)\partial_t\log B\\
&\qquad-rs\partial_x \log B -B\partial_tBr^2s,\\
\partial_ts+r\partial_xs&=\frac{s-r}2(\partial_t+s\partial_x)\log \kappa
-(r+s)\partial_t\log B\\
&\qquad-rs\partial_x \log B -B\partial_tBrs^2.
\end{split}\end{align}
Conversely, if $(r, s)$ is a solution to \eqref{e:rs}, subject to
the condition $s>r$, we define
\begin{equation*}%\label{1.4.2}
L=\frac{2}{s-r}B\kappa,\quad M=-\frac{s+r}{s-r}B\kappa, \quad
N=\frac{2rs}{s-r}B\kappa,
\end{equation*}
and verify by a straightforward computation that $L, M, N$ satisfy the Gauss-Codazzi system.
We conclude by the
fundamental theorem of the surface theory that $g$ admits a smooth
isometric immersion in $\mathbb R^3$ if there exist smooth
functions $s>r$ in $\mathbb R^2$ satisfying \eqref{e:rs}, which is (3.3.9) in \cite{HH}
and was first derived by {Rozhdestvenski\u\i} \cite{Rozh}.

On the other hand, in the geodesic polar coordinates  $(\theta, \rho)$, we write the curvature as
$$K=-k^2(\theta, \rho),$$ with $k$ a positive function on $\M$. The metric $g$ is then of the form
\begin{equation}\label{e:metric-polar}
g=G^2(\theta, \rho)d\theta^2+d\rho^2,\end{equation}  where $G$ is a positive function satisfying
\begin{equation}\label{e:gauss}
\begin{cases}
\dis \drho^2G=k^2G,\\
\dis G(\theta,0)=0,\, \drho G(\theta,0)=1.
\end{cases}
\end{equation}
The Christoffel symbols  are the following:
\begin{equation*}
\Gamma^1_{11}=\dthe\log G, \,\, \Gamma^2_{11}=-G\drho G,\,\, \Gamma^1_{12}=\partial_\rho\log G, \,\,
\Gamma^1_{22}=\Gamma^2_{12}=\Gamma^2_{22}=0.
\end{equation*}
The second-second fundamental form is written as
\begin{equation*}%\label{e:secondform}
 I\!\!I= \tilde Ld\theta^2+2\tilde M d\theta d\rho+\tilde N d\rho^2.
\end{equation*}
Then, the Gauss-Codazzi system \eqref{e:GC}-\eqref{e:Gauss} is reduced to
\begin{equation}\label{gc1}
	\begin{split}
\partial_\rho \tilde L-\partial_\theta \tilde M&=\tilde L\partial_\rho\log G-\tilde M\partial_\theta\log G+\tilde N G\partial_\rho G,\\
\partial_\rho \tilde M-\partial_\theta \tilde N&=- \tilde M\partial_\rho\log G,
\end{split}
\end{equation}
and
\begin{align}\label{gs1}
\tilde L\tilde N-\tilde M^2=-k^2G^2.
\end{align}
Similarly, as in the geodesic coordinates, we also consider the equations for the Riemann invariants $w$ and $z$ defined by
% We introduce new variables $w$ and $z$ as follows:
\begin{equation}\label{1.4.1-1}
w=\frac {-\tilde{M}-Gk} {\tilde L},\quad z=\frac
{-\tilde M+Gk}{\tilde L}.
\end{equation}
By a straightforward calculation, we obtain
\begin{align}\label{e:wz}\begin{split}
\partial_\rho w+z\partial_\theta w&=\frac{w-z}2(\partial_\rho+w\partial_\theta)\log k
-(w+z)\partial_\rho\log G\\
&\qquad-wz\partial_\theta \log G -G\partial_\rho G w^2z,\\
\partial_\rho z+w\partial_\theta z&=\frac{z-w}2(\partial_\rho+z\partial_\theta)\log k
-(w+z)\partial_\rho\log G\\
&\qquad-wz\partial_\theta \log G -G\partial_\rho Gwz^2.
\end{split}\end{align}
As we can see from \eqref{e:gauss}, $\drho\log G$  has a singularity when $\rho=0.$ Thus we always solve \eqref{e:wz} in the region $\{\rho>0\}$.   If $(w, z)$ is a solution to \eqref{e:wz} in $\{\rho>0\}$, subject to
the condition $z>w$, we define
\begin{equation*}%\label{1.4.2-1}
\tilde L=\frac{2}{z-w}Gk,\quad \tilde M=-\frac{z+w}{z-w}Gk, \quad
\tilde N=\frac{2wz}{z-w}Gk,
\end{equation*}
and verify by a straightforward computation that $\tilde L, \tilde M, \tilde N$ satisfy the Gauss-Codazzi system. Then, we can obtain the solutions to the Gauss-Codazzi system in $\{\rho>0\}$.

\subsection{An equivalent system}%\label{s:ode}

We note that $w+z$ and $z-w$ appear in significant positions in the system \eqref{e:wz}. It is natural to derive a differential system of $w+z$ and $z-w$. For later purposes, we derive a more general system.

Let $\alpha$ and $\beta$ be two constants. We introduce 
functions $u$ and $v$ by
\begin{equation*}% \label{eq-relation-rs-uv}
u=G^\alpha k^{-\beta}(w+z),\quad v=G^\alpha k^{-\beta}(z-w).
\end{equation*}
A straightforward computation yields
\begin{equation}\label{eq-u-general}\begin{split}
& \partial_\rho u+\frac{1}{2}G^{-\alpha}k^\beta u\partial_\theta u-\frac {1}{2}G^{-\alpha}k^\beta v\partial_\theta v\\
&\qquad=-u\partial_\rho\log(G^{2-\alpha}k^\beta)
-\frac{1}{2}G^{-\alpha}k^\beta u^2\partial_\theta\log(G^{1-\alpha}k^\beta)\\
&\qquad\quad+\frac12G^{-\alpha}k^\beta v^2\partial_\theta\log(G^{1-\alpha}k^{1+\beta})
-\frac14G^{1-2\alpha}\partial_\rho Gk^{2\beta}(u^2-v^2)u,%=:\mathbf{f}_1(u, v, \theta,\rho),
\end{split}\end{equation}
and
\begin{equation}\label{eq-v-general}\begin{split}
& \partial_\rho v+\frac{1}{2}G^{-\alpha}k^\beta u\partial_\theta v-\frac {1}{2}G^{-\alpha}k^\beta v\partial_\theta u\\
&\qquad=-v\partial_\rho\log(G^{-\alpha}k^{\beta-1})-\frac{1}{2}G^{-\alpha}k^\beta uv\partial_\theta\log k\\
&\qquad\quad+\frac14G^{1-2\alpha}\partial_\rho Gk^{2\beta}(u^2-v^2)v.%=:\mathbf{f}_2(u, v, \theta,\rho).
\end{split}\end{equation}
We aim to solve the Cauchy problem of \eqref{eq-u-general}-\eqref{eq-v-general} with appropriately chosen initial data so that $v>0$ in $\{\rho\geq R\}$ with some $R>0$ to be fixed.

We now make two important observations.
First, the differential system \eqref{eq-u-general}-\eqref{eq-v-general} is a quasilinear {\it symmetric} hyperbolic system in $(u,v)$. The symmetry is important and permits us to employ the energy method to derive a priori estimates.
Second, we can write the  equation \eqref{eq-v-general} as
$$\partial_\rho v+\frac{1}{2}G^{-\alpha}k^\beta u\partial_\theta v+\mathcal{F}v=0,$$
for some function $\mathcal{F}$. It is easy to conclude that $v$ remains positive if it is positive initially.

\subsection{A special solution}%\label{se-special}

To study behaviors of solutions near infinity, we consider a special case that $G$ and $k$ are functions of $\rho$ only,
and seek solutions $u$ and $v$ as functions of $\rho$.
Denote by $'$ the derivative with respect to $\rho$.
Then, the system \eqref{eq-u-general}-\eqref{eq-v-general} reduces to
\begin{align*}
u'
+u(\log(G^{2-\alpha}k^\beta))'
+\frac14G^{1-2\alpha}G'k^{2\beta}(u^2-v^2)u&=0,\\
v'+v(\log(G^{-\alpha}k^{\beta-1}))'
+\frac14G^{1-2\alpha}G'k^{2\beta}(u^2-v^2)v&=0.
\end{align*}
We can write these equations as
\begin{align*}
(\log(G^{2-\alpha}k^\beta u))'+\frac14G^{1-2\alpha}G'k^{2\beta}(u^2-v^2)&=0,\\
(\log (G^{-\alpha}k^{\beta-1}v))'+\frac14G^{1-2\alpha}G'k^{2\beta}(u^2-v^2)&=0.
\end{align*}
A simple comparison yields $G^2ku=cv$ for some constant $c$. Hence, with \eqref{e:gauss}, we have
$$(G^{-\alpha}k^{\beta-1}v)'+\frac14\Big(c^2\frac{G'}{G^3}-G'G''\Big)(G^{-\alpha}k^{\beta-1}v)^3=0.$$
This equation can be integrated to yield an explicit expression of  $G^{-\alpha}k^{\beta-1}v$. We now
prescribe the initial value
$$u(R)=u_0, \quad v(R)=v_0,$$
for some constants $u_0$ and $v_0$ with $v_0>0$. Then, we obtain
\begin{align}\label{eq-ODE-solution}\begin{split}
u&=2k^\beta(R)u_0\,G^{\alpha-2}k^{-\beta}\Big(4+k^2(R)u_0^2(1-G^{-2})-v_0^2k^{2\beta-2}(R)G'^2\Big)^{-1/2},\\
v&=2k^{\beta-1}(R)v_0\,G^{\alpha}k^{1-\beta}\Big(4+k^2(R)u_0^2(1-G^{-2})-v_0^2k^{2\beta-2}(R)G'^2\Big)^{-1/2},
\end{split}\end{align}
as long as the expressions on the right-hand sides make sense.
By \eqref{e:gauss}, both $G$ and $G'$ are increasing and $G\ge G(R)$
on $[R,\infty)$. Hence, $u$ and $v$
exist on $[R,\infty)$ if $v_0k^{\beta-1}(R)G'(\infty)<2$. By \eqref{e:gauss} again, $G'(\infty)$ is finite if and only if
\begin{equation}\label{eq-ODE-condition}
\int_R^\infty (Gk^2)(\rho)d\rho<\infty.\end{equation}
In conclusion, for the ODE case that $G$ is independent of $\theta$,
we find explicit global solutions $(u,v)$ of \eqref{eq-u-general}-\eqref{eq-v-general}
with $v>0$, provided $v_0>0$ is sufficiently small.

We now examine \eqref{eq-ODE-solution} closely. Assume that
\begin{equation*}
v_0k^{\beta-1}(R)\int_R^\infty (Gk^2)(\rho)d\rho<2.\end{equation*}
Then, $u$ and $v$ exist on $[R,\infty)$, and
$$|u|\le C|u_0|G^{\alpha-2}k^{-\beta},\quad |v|\le Cv_0G^{\alpha}k^{1-\beta}\quad\text{on }[R,\infty).$$
By \eqref{eq-ODE-condition}, we may assume $k=o(G^{-1})$. Note that
$$|u|\le C|u_0|G^{\alpha-2+2\beta}(kG^2)^{-\beta},\quad |v|\le Cv_0G^{\alpha-1+\beta}(Gk)^{1-\beta}\quad\text{on }[R,\infty).$$
To make $v$ bounded, we choose $\alpha=1-\beta.$ For the bound of $u, $ we have the following two choices. If $kG^2$ grows, we choose $\alpha=0, \beta=1$ and then $u$ decays. This demonstrates that certain linear combination of Riemann invariants decays faster than themselves. If $kG^2$ decays, we choose  $\alpha=1, \beta=0$ and then both $u$ and $v$ decay.
As a consequence, we need to choose different sets of parameters $(\alpha, \beta)$ according to whether
the function $kG^{2}$ decays or grows as $\rho \to\infty$.
The ODE analysis outlined above suggests different approaches according to the growth and decay of $kG^2$.

\section{Solutions near the Origin}\label{s-near}

To prove Theorem \ref{thrm-main1}, it is sufficient to show the global existence of a smooth solution to the Gauss-Codazzi system \eqref{e:GC}-\eqref{e:Gauss}.
Since \eqref{eq-decomposition}-\eqref{eq-a-bv} holds on the geodesic polar coordinates $(\theta,\rho)$, and there is a singularity at the origin in the  $(\theta,\rho)$ coordinates,  we only use the geodesic polar coordinates $(\theta,\rho)$ to solve the system \eqref{gc1}-\eqref{gs1} in a region $\Omega$ where the origin is excluded. We will study the system \eqref{gc}-\eqref{gs} in the region $\R^2\setminus\Omega$ including the origin in the geodesic coordinates $(x,t)$.

We only consider the domain $t\ge 0$ since the region $t\le 0$ can be treated similarly.  We divide the region $t\ge 0$ into two parts by
\begin{equation}\label{omega}
	\Omega_1=\{t\le t_0(x)\text{ and }\rho(x,t)\le R_1\},\quad  \Omega_2=\R_+^2\setminus\Omega_1,
\end{equation} where
\begin{equation*}
	t_0(x)=R(1+x^2)^{\frac\mu 2}, \quad
	R_1=\max\{\rho(x, t_0(x)); |x|\leq2\},\quad \mu=\tfrac{2}{2-3 \delta}>1,
\end{equation*}
and $R$ is a large constant to be determined later. Then $2R\leq R_1\leq c_0R$ for some constant $c_0$ and we have
\begin{equation*}
	\partial\Omega_1=\{\rho(x, t)=R_1, x\in[b_-, b_+] \}\cup \{t=t_0(x), x\in\R\setminus(b_-, b_+)\},
\end{equation*}
where $\rho(b_-, t_0(b_+))=\rho(b_+, t_0(b_+))=R_1,$ with two constants $b_-<0<b_+$.

 With $(r,s)$ given by \eqref{1.4.1}, set
\begin{align*}
	p=Br, \quad q=Bs. %\quad \text{ and }  \quad \tilde{p}=Gw, \quad \tilde{q}=Gz.
\end{align*}
By \eqref{e:rs}, a simple computation yields that $(p,\, q)$ satisfies
\begin{align}
	\dis \dt p+\frac{q}{B}\dx p&=\frac{p-q}{2}(\dt+\frac{p}{B}\dx)\log\kappa-q(1+p^2)\dt\log B, \label{e:p}\\
	\dis \dt q+\frac{p}{B}\dx q&=\frac{q-p}{2}(\dt+\frac{q}{B}\dx)\log\kappa-p(1+q^2)\dt\log B.\label{e:q}
\end{align}

We impose the following initial data:
\begin{equation}\label{e:initial-pq}
	p(x, 0)=-\phi(x), ~~q(x, 0)=\phi(x),
\end{equation}
where $\phi(x)$ is a smooth and even function in $\R$,  and for $x\geq0$,
\begin{align*}
	\phi(x)=\dis\frac{1}{64\pi^2}\!\int_{x}^\infty e^{-\eta^2}\omega(\eta)\!\int_\eta^\infty
	\frac{\exp\{-700h_1(y+1)(t_0(y+1)+R_1)-y^2\}}{(t_0(y+1)+R_1)^4 h_2^4(y+1)}dyd\eta.
\end{align*}
Here $\omega(\eta)$ is a smooth cutoff function such that $\omega(\eta)=1$ for $\eta\geq1$ and $\omega(\eta)=0$ for $\eta$ near 0. It is easy to see that $\phi(x)$ is a constant near 0. Define $h_1, h_2$ as smooth and even functions in $\R$ by, for $y\geq0$,
\begin{align*}
	&h_1(y)\geq 1+\sup_{\substack{(x, t)\in\Omega_1, |x|\leq y\\ i=0, 1,2, j=1, 2}}\{\dx^i\dt^{j-1}(B^{-1}), 2|\dx^i\dt^j\log k|, 2|\dx^{i+1}\log k|,|\dx^i\dt^j\log B|\},\\
	& h_2(y)\geq1+h_1(y)+\sup_{\substack{(x, t)\in\Omega_1\\ |x|\leq y}}\{|t_0'(x)|(t_0(x)+R_1)^{-1}+|x|^2\},
\end{align*}
%Here $\epsilon$ is a positive constant to be fixed.
and they are increasing in $[0, \infty).$ Note that $h_1$ includes all the coefficients in \eqref{e:p}-\eqref{e:q} and their $x$-derivatives of second order and $t$-derivatives of first order.
We have the following lemma on the existence of solutions in  $\Omega_1$.

\begin{lemma}\label{l:omega-1}
	For any  $R>1$, the Cauchy problem of \eqref{e:p}-\eqref{e:q} with \eqref{e:initial-pq} in $\Omega_1$
	admits a unique smooth solution $(p,\, q)$ such that
	\begin{itemize}
		\item[(i)] $q-p>0$;
		\item[(ii)] $\partial\Omega_1$ is space-like in $t$ for \eqref{e:p}-\eqref{e:q};
		\item[(iii)] for $i=0, 1, 2$,
		\begin{equation}\label{e:p-q-x-derivatives}
			|\dx^ip|+|\dx^iq|\leq C[(t_0(x)+R)h_2(x)]^{-4}\quad\text{on }\partial\Omega_1.
		\end{equation}
	\end{itemize}	
\end{lemma}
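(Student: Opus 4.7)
The plan is to solve the Cauchy problem by the method of characteristics for the diagonal quasilinear system \eqref{e:p}-\eqref{e:q}, and to verify (i)--(iii) via a priori estimates tuned to the specific form of the initial datum $\phi$. Since $\phi$ is smooth, bounded, and has bounded derivatives of all orders, standard local well-posedness for symmetric hyperbolic systems yields a smooth solution on a thin strip $\{0\le t\le\delta\}$; the strategy is to prolong this solution throughout $\Omega_1$ by a continuation argument driven by a pointwise bound that persists as long as the solution exists.

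The heart of the proof is the pointwise estimate
\begin{equation*}
|\dx^i p(x,t)|+|\dx^i q(x,t)|\le C\bigl((t_0(x)+R)\,h_2(x)\bigr)^{-4},\qquad i=0,1,2,
\end{equation*}
throughout $\Omega_1$, which directly implies \eqref{e:p-q-x-derivatives}. By definition, $h_1$ dominates every coefficient appearing on the right-hand sides of \eqref{e:p}-\eqref{e:q} together with their $x$-derivatives of order up to two and their $t$-derivatives of order up to one. Integrating \eqref{e:p}-\eqref{e:q} along the characteristics $dx/dt=q/B$ and $dx/dt=p/B$ from a point $(x_0,0)$ yields integral inequalities of the form
\begin{equation*}
|p(x(t),t)|\le|\phi(x_0)|+C\,h_1(x(t))\int_0^t\bigl(|p|+|q|+|p|^2|q|+|p||q|^2\bigr)(x(\tau),\tau)\,d\tau,
\end{equation*}
and similarly for $q$. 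The damping factor $\exp\{-700h_1(y+1)(t_0(y+1)+R_1)\}$ in the definition of $\phi$ is designed precisely to absorb the Gronwall exponential $e^{Ch_1 t}$ up to the extinction time $t\le t_0(x)+R_1$, while the weight $(t_0(y+1)+R_1)^{-4}h_2^{-4}(y+1)$ furnishes the desired boundary decay. This closes the $i=0$ case. Differentiating \eqref{e:p}-\eqref{e:q} in $x$ once and twice produces linear transport equations for $\dx p,\dx q$ and $\dx^2 p,\dx^2 q$ whose coefficients are again controlled by $h_1$ and its first two $x$-derivatives, and the same barrier argument, combined with explicit differentiation of the formula for $\phi$, propagates the $i=1,2$ bounds.

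Properties (i) and (ii) then follow quickly. For (i), subtracting \eqref{e:p} from \eqref{e:q} shows that $v=q-p$ satisfies a scalar transport equation $\dt v+\tfrac{p+q}{2B}\dx v=A(x,t)\,v$ with $A$ bounded in terms of the already-controlled $(p,q)$, so integration along characteristics propagates the initial positivity $v(x,0)=2\phi(x)>0$ throughout $\Omega_1$. For (ii), the space-like condition on the arc $\{t=t_0(x)\}$ reduces to $|p/B|,|q/B|<1/|t_0'(x)|$; this follows from the previous pointwise bound together with the estimate $|t_0'(x)|/(t_0(x)+R_1)\le h_2(x)$ built into the definition of $h_2$ and the uniform lower bound on $B$, once $R$ is chosen large. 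The arc $\{\rho(x,t)=R_1\}$ is treated analogously using the smallness of $(p,q)$. The principal obstacle is the pointwise barrier construction itself: the fourth-power weights $(t_0+R)^{-4}$ and $h_2^{-4}$ are essentially borderline because double $x$-differentiation produces cubic-in-$(p,q)$ nonlinearities and coefficients involving $\dx^2 B,\dx^2\kappa$, all of which are tracked by $h_1,h_2$ but only close an inductive Gronwall loop when the constant $700$ in the definition of $\phi$ is large enough to swamp the accumulated Gronwall factors.
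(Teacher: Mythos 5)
Your proposal follows essentially the same route as the paper, which omits the proof entirely and refers to Lemma 10.3.4 of \cite{HH}: integration along the two characteristic families, a Gronwall/barrier estimate in which the factor $\exp\{-700h_1(\cdot)(t_0(\cdot)+R_1)\}$ in $\phi$ absorbs the exponential growth and the weight $(t_0+R_1)^{-4}h_2^{-4}$ supplies the decay \eqref{e:p-q-x-derivatives}, followed by the transport equation for $q-p$ and the slope comparison for the space-like property. The only point worth making explicit in a full write-up is that the characteristics drift in $x$ by at most an $O(1)$ amount (which is why $\phi$ is built from $h_1(y+1)$, $h_2(y+1)$ rather than $h_1(y)$, $h_2(y)$), so that the monotone majorants $h_1,h_2$ evaluated along a characteristic are controlled by their values at the shifted foot point.
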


\begin{proof}
This lemma can be proved by similar arguments for Lemma 10.3.4 in \cite{HH} .  The proof is then omitted.
\end{proof}

In the rest of the paper,  we will seek smooth solutions in $\Omega_2.$ To this end,
%get solutions in $ \Omega_2$ under the conditions in Theorem \ref{thrm-main1},
we shall transform the equations in $\Omega_2$ to that in $\tilde\Omega_2$ in geodesic polar coordinates. The proof of the existence of solutions in $\tilde\Omega_2$  is completely different from \cite{H, HH}. After delicate analysis of the data on the boundary of $\tilde\Omega_2$ in  Section \ref{s-transform} and Section \ref{s-boundary}, we prove the global existence in $\tilde\Omega_2$ via an energy method in Section \ref{s-local} and Section \ref{s-global}.

%%%%%%%%%%%%%%
\section{Properties of Metrics and Curvatures}\label{s-property}
%%%%%%%%%%%%%%%

Starting from this section, we consider the geodesic polar coordinates. We first establish some properties of metrics and curvatures under the assumptions \eqref{e:inte-cond}-\eqref{eq-a-bul}, to prepare for the estimates of initial boundary values and the global existence of solutions in $\tilde\Omega_2$.

For simplicity, write
\begin{equation}\label{delta}\delta=\gamma/2.\end{equation}
Then, $\delta\in(0, 1/2)$, since $\gamma$ is assumed to satisfy $0<\gamma<1$ as in Theorem \ref{thrm-main1}. Recall that
$$K(\theta, \rho)=-k^2(\theta, \rho),\, \rho^{2+\gamma}|K|=\overline{K}(\rho)a^2(\theta, \rho),\, K_*=\rho^{-2-\gamma}\overline{K}.$$
Write
\begin{equation}\label{k-star}k_*(\rho)=\sqrt{K_*(\rho)}.\end{equation}
Then,
$$k=ak_*.$$ Hence,  $\rho^{1+\delta}k_*(\rho)$ and $\overline{K}(\rho)$ have the same type of monotonicity;
namely, they increase or decrease simultaneously.

\begin{lemma}\label{l:G-property}
Assume that all the conditions in Theorem \ref{thrm-main1} are fulfilled. Then, %for sufficiently large constant $T,$
\begin{equation}
\label{e:k2t-integrable}
\int_{0}^\infty \rho k_*^2(\rho)d\rho \text{ is finite},
\end{equation}
\begin{equation}\label{eq-limit-krho}
\lim\limits_{\rho\to\infty}k_*(\rho)\rho=0,
\end{equation}
and
\begin{equation}\label{e:bt-integrable}
\frac{\drho G}{G}\geq\frac{1}{\rho}, \quad \int_{1}^\infty\max_\theta\left|\frac{\drho G}{G}-\frac{1}{\rho}\right|d\rho \text{ is finite}.
\end{equation}
Moreover,
\begin{equation}
\label{e:g-upper-lower-bound}
1\leq \partial_\rho G \leq\exp\max_\theta\left\{\int_0^\infty k^2\rho d\rho\right\}, \quad \rho\leq G \leq \rho\exp\max_\theta\left\{\int_0^\infty k^2\rho d\rho\right\},
\end{equation}
and
\begin{equation}\label{e:bx-bound}
\dthe^i\log G,~G\dthe^i\drho\log G \text{ are bounded for $\rho\geq 1$ and $1\leq i\leq3$}.
\end{equation}
\end{lemma}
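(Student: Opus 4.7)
The plan is to exploit the ODE $\drho^2 G = k^2 G$ with $G(\theta,0)=0$, $\drho G(\theta,0)=1$, together with the decomposition $k=ak_*$ and the finite total curvature hypothesis, and to establish the five assertions in a natural order. I would first observe that $\drho G$ is nondecreasing (since $\drho^2 G = k^2 G \ge 0$), hence $\drho G \ge 1$ and $G \ge \rho$. Setting $H:=\rho\,\drho G - G$, I find $H(\theta,0)=0$ and $H' = \rho k^2 G \ge 0$, so $H\ge 0$; this is exactly $\drho G/G \ge 1/\rho$. The equivalent reformulation $G/\drho G \le \rho$ yields $\drho(\log \drho G) = k^2 G/\drho G \le k^2\rho$, and integration gives $\drho G(\theta,\rho) \le \exp\int_0^\rho k^2(\theta,s)s\,ds$ together with $G \le \rho\,\drho G$; once the integral $\int_0^\infty k^2\rho\,d\rho$ is controlled by \eqref{e:k2t-integrable}, taking $\max_\theta$ closes \eqref{e:g-upper-lower-bound}.

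For \eqref{e:k2t-integrable} I would use the polar area element $dA_g = G\,d\theta\,d\rho$, so that
\[
\int_{\M}|K|\,dA_g = \int_0^{2\pi}\!\int_0^\infty a^2 k_*^2 G\,d\rho\,d\theta \ge c\int_0^{2\pi}\!\int_0^\infty k_*^2\rho\,d\rho\,d\theta,
\]
using $a\ge c>0$ from \eqref{eq-a-bul} and $G\ge\rho$, and the left side is finite by \eqref{e:inte-cond}. The limit \eqref{eq-limit-krho} then splits according to \eqref{eq-K-bar-bound}: if $\overline K$ is eventually decreasing, it is bounded and $\rho^2 k_*^2 = \rho^{-\gamma}\overline K \to 0$; if $\overline K$ is eventually increasing, the tail inequality $\int_\rho^{2\rho} s^{-1-\gamma}\overline K(s)\,ds \ge \tfrac{1-2^{-\gamma}}{\gamma}\rho^{-\gamma}\overline K(\rho)$ together with integrability of $\rho k_*^2=\rho^{-1-\gamma}\overline K$ forces $\rho^{-\gamma}\overline K \to 0$, again giving $\rho k_*\to 0$.

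For the integrability in \eqref{e:bt-integrable}, the identity $\drho G/G - 1/\rho = H/(\rho G)$ with $H(\theta,\rho) = \int_0^\rho \tau k^2 G\,d\tau$, the bounds $G(\tau)\le C\tau$ and $G(\rho)\ge\rho$ from \eqref{e:g-upper-lower-bound}, and $k^2\le Ck_*^2$ give
\[
0 \le \frac{\drho G}{G} - \frac{1}{\rho} \le \frac{C}{\rho^2}\int_0^\rho k_*^2(\tau)\tau^2\,d\tau.
\]
Fubini reduces $\int_1^\infty$ of this quantity to a multiple of $\int_0^\infty k_*^2\tau\,d\tau$, finite by the previous step.

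The hardest part will be \eqref{e:bx-bound}. My plan is to set $W_j := \dthe^j G$, differentiate the ODE $j$ times in $\theta$ to obtain the linear inhomogeneous equation $\drho^2 W_j - k^2 W_j = F_j$ with $F_j = \sum_{l=1}^j\binom{j}{l}(\dthe^l k^2)W_{j-l}$ and vanishing initial data for $j\ge 1$, and to convert it to the Volterra form $W_j(\rho) = \int_0^\rho (\rho-s)[k^2 W_j + F_j]\,ds$. Inducting on $j\in\{1,2,3\}$, with $|\dthe^l k^2|\le Ck_*^2$ from \eqref{eq-a-bul} and the inductive hypothesis $|W_{j-l}/\rho|\le C$, I would obtain $|F_j(s)|\le Ck_*^2(s)s$, hence $\int_0^\rho |F_j|\,ds \le C$, and Gronwall based on $\int_0^\infty k^2 s\,ds<\infty$ closes $|W_j/\rho|\le C$; differentiating the Volterra form gives $|\drho W_j|\le C$. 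Since $\dthe^i\log G$ and $G\dthe^i\drho\log G$ are rational expressions in $W_l$, $\drho W_l$, $G$, $\drho G$ with powers of $G$ in the denominators, the bounds $G\ge\rho$, $\drho G\le C$, $|W_l|\le C\rho$, $|\drho W_l|\le C$ give boundedness for $\rho\ge 1$. The principal obstacle lies in the inductive bookkeeping and in the fact that every Gronwall estimate must close at the borderline integrability $\int_0^\infty k^2\rho\,d\rho<\infty$.
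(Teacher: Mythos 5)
Your proposal is correct and proves all five assertions; items \eqref{e:k2t-integrable}, \eqref{eq-limit-krho}, and \eqref{e:bt-integrable} follow essentially the paper's own route (area element plus $G\ge\rho$ and $a\ge c$; tail-of-a-convergent-integral domination of $\rho^2k_*^2$ in the increasing case; the identity $\drho G/G-1/\rho=(\rho G)^{-1}\int_0^\rho \tau k^2G\,d\tau$ followed by Fubini). Where you genuinely diverge is in \eqref{e:g-upper-lower-bound} and \eqref{e:bx-bound}. For the former, the paper invokes the iteration scheme from Hong's (1.8) to get $\drho G\le\exp\int_0^\rho sk^2ds$, whereas you observe that $H=\rho\drho G-G\ge0$ gives $\drho(\log\drho G)=k^2G/\drho G\le k^2\rho$ and integrate once; this is cleaner and entirely self-contained. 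For \eqref{e:bx-bound}, the paper differentiates the Riccati form $\drho^2\log G=k^2-(\drho\log G)^2$ in $\theta$, solves the resulting first-order linear ODE for $\dthe\drho\log G$ explicitly (picking up the $G^{-2}$ weight and the data at $\rho=1$), and iterates for $i=2,3$; you instead run a Volterra--Gronwall induction on $W_j=\dthe^jG$ directly from $\drho^2W_j=k^2W_j+F_j$ with vanishing data, obtaining $|W_j|\le C\rho$, $|\drho W_j|\le C$, and then read off $\dthe^i\log G$ and $G\dthe^i\drho\log G$ as rational expressions. Both close at the same borderline integrability $\int_0^\infty k^2\rho\,d\rho<\infty$; your version avoids the explicit integrating factor but requires the (routine) bookkeeping that the quotient expressions stay bounded, and, like the paper, it needs the small-$\rho$ region (where the decomposition $k=ak_*$ is not assumed) to be handled by compactness, since $|\dthe^lk|\le Ck_*$ is only available for large $\rho$.
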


The proof of Lemma \ref{l:G-property} is straightforward and relies on \eqref{e:inte-cond}-\eqref{eq-a-bul}.

\begin{proof} %[Proof of Lemma \ref{l:G-property}]
	By \eqref{e:gauss}, we have
	\begin{equation}\label{e:b}
		G(\theta, \rho)=\rho+\int_0^\rho\int_0^s(Gk^2)(\theta, \tau)d\tau ds.
	\end{equation}
	An exchange of the order of integration in \eqref{e:b} yields
	\begin{equation}\label{e:b-1}
		%\begin{split}
		G(\theta, \rho)%&=1+\int_0^t(t-s)(Bk^2)(x, s)ds\\
		=\rho+\rho\int_0^\rho(Gk^2)(\theta, s)ds-\int_0^\rho s(Gk^2)(\theta, s)ds.
		%\end{split}
	\end{equation}
	Moreover,
	\begin{equation*}%\label{e:bt-integral}
		\int_0^{2\pi} \drho G(\theta, \rho)dx=2\pi+\int_0^{2\pi}\int_0^\rho(Gk^2)(\theta, s)dsd\theta.
	\end{equation*}
	Thus
	\begin{equation}
		\label{e:bt-integral-1}
		\int_0^{2\pi} \drho G(\theta, \rho)d\theta\geq 2\pi.
	\end{equation}
	Since  $G(\theta, 0)=0$, an integration of \eqref{e:bt-integral-1} over $[0, \rho]$ yields
	\begin{equation}\label{e:b-integral}
		\int_0^{2\pi} G(\theta, \rho)d\theta\geq 2\pi\rho.
	\end{equation}
	
	On the other hand, due to \eqref{eq-a-bul},  there exists a constant $C_1>0$ such that
	$$C_1^{-1}\leq a\leq C_1.$$
	Set
	$$J_0=\int_{\mathcal{M}}|K|dA_g=\int_0^{2\pi}\int_0^\infty(Gk^2)(\theta, s)dsd\theta.$$
	By \eqref{e:b-integral}, we obtain%, for any $t>2T_0$,
	\begin{align*}
		J_0=\int_0^{2\pi}\int_0^\infty k_*^2(s)(Ga^2)(\theta, s)dsdx
		%\geq \frac{1}{C_1^2}\int_0^{2\pi}\int_0^\infty k_*^2(s)G(\theta, s)dsdx\\
		%&\quad =\frac{1}{C_1^2}\int_0^\infty k_*^2(t)\int_0^{2\pi} G(\theta, t)dxdt
		\geq \frac{1}{C_1^2}\int_0^\infty k_*^2(s)\int_0^{2\pi} G(\theta, s)d\theta ds
		\geq\frac{2\pi}{C_1^2}\int_0^\infty sk_*^2(s)ds.
	\end{align*}
	This implies
	$$\int_0^\infty sk_*^2(s)ds\le C_1^2J_0,$$
	%Then by \eqref{e:b-integral}, we obtain%, for any $t>2T_0$,
	%\begin{align*}
	%J_0&=\int_0^{2\pi}\int_0^\infty k_*^2(s)(Ga)(\theta, s)dsdx\\
	%%\geq \frac{1}{C_1^2}\int_0^{2\pi}\int_0^\infty k_*^2(s)G(\theta, s)dsdx\\
	%%&\quad =\frac{1}{C_1^2}\int_0^\infty k_*^2(t)\int_0^{2\pi} G(\theta, t)dxdt
	%&\geq \frac{1}{C_1^2}\int_{2R_0}^\infty k_*^2(s)\int_0^{2\pi} G(\theta, s)d\theta ds
	%\geq\frac{J_0}{4C_1^2}\int_{2R_0}^\infty sk_*^2(s)ds.
	%\end{align*}
	%This implies
	%$$\int_{2R_0}^\infty sk_*^2(s)ds\le 4C_1^2,$$
	and hence \eqref{e:k2t-integrable} holds.
	Moreover,  for any $\rho>0$,
	\begin{align*}
		\int_0^\rho sk^2(\theta, s)ds\leq C_1\int_0^\rho sk_*^2(s)ds%=C_1^2\int_1^Tk_*^2(s)sds+C_1^2\int_T^\rho k_*^2(s)sds
		\leq C_2,
	\end{align*}
	for some positive constant $C_2$.
	
	Next, it is easy to get \eqref{eq-limit-krho} if $\rho^{1+\delta}k_*$ is monotonically decreasing. If $\rho^{1+\delta}k_*$ is increasing, then for any $\tau>\rho>1,$ we have
	\[ k_*^2(\tau)\tau^{2+2\delta}\geq k_*(\rho)\rho^{2+2\delta},\]
	which implies
	\[ k_*^2(\tau)\tau\geq k_*(\rho)\rho^{2+2\delta}\tau^{-1-2\delta}.\]
	Integrating the above inequality on $(\rho, \infty)$ yields
	\[\int_\rho^\infty k_*^2(\tau)\tau d\tau\geq k_*(\rho)\rho^{2+2\delta}\int_\rho^\infty\tau^{-1-2\delta}d\tau=\frac1{2\delta}k_*^2(\rho)\rho^2.\]
	With \eqref{e:k2t-integrable}, we have \eqref{eq-limit-krho}.
	
	In addition, integrating \eqref{e:gauss} yields
	\begin{align*}
		\drho G&=1+\int_0^\rho k^2Gds=1+\int_0^\rho Gd\left(\int_0^sk^2d\tau\right)\\
		&=1+G\int_0^\rho k^2d\tau-\int_0^\rho \partial_s G\int_0^s k^2d\tau ds\\
		&=1+\int_0^\rho \partial_s Gds\int_0^\rho k^2d\tau-\int_0^\rho \partial_s G\int_0^s k^2d\tau ds\\
		&=1+\int_0^\rho \partial_s G\int^\rho_s k^2d\tau ds.
	\end{align*}
	Thus, by the same iteration  for the proof of (1.8) in \cite{H}, we have, for any $\theta\in [0,2\pi]$ and $\rho>0$,
	\begin{equation*}%\label{e:bt-lower-upper}
		1\leq \drho G(\theta, \rho)\leq\exp\int_0^\rho sk^2(\theta , s)ds.
	\end{equation*}
	Then,
	\begin{equation*}%\label{e:B-lower-upper}
		\rho\leq G(\theta, \rho)\leq\int_0^\rho \exp\int_0^\tau sk^2(x, t)ds d\tau.
	\end{equation*}
	Hence, for any $\theta\in[0, 2\pi]$ and any $\rho>0$,
	\begin{equation*}%\label{e:bt-bound}
		1\leq \drho G(\theta, \rho)\leq \exp\max_{\theta}\int_0^\rho sk^2(\theta , s)ds\leq e^{C_2},
	\end{equation*}
	and then, by a simple integration,
	\begin{equation}\label{e:b-bound}
		\rho\leq G(\theta, \rho)\leq \rho\exp\max_{\theta}\int_0^\rho sk^2(\theta , s)ds\leq e^{C_2}\rho.
	\end{equation}
	Then, we obtain \eqref{e:g-upper-lower-bound}. By  \eqref{e:b-1}, we have, for $\rho>0$,
	\begin{equation}\label{e:bt-over-t}
		\frac{\drho G}{G}-\frac{1}{\rho}=\frac{\rho^{-2}\int_0^\rho sGk^2ds}{1+\int_0^\rho Gk^2(1-s\rho^{-1})ds}>0.
	\end{equation}
	Thus, for any $\rho>1$,
	\begin{equation*}
		\max_\theta\left|\frac{\drho G}{G}-\frac{1}{\rho}\right|\leq\frac{e^{C_2}C_1^2\rho^{-2}\int_0^\rho s^2k_*^2ds}{1+C_1^{-2}\int_0^\rho sk_*^2(1-s\rho^{-1})ds}.
	\end{equation*}

	By an exchange of the order of integration and \eqref{e:k2t-integrable}, we have, for any $\rho>1$,
	\begin{align*}
		\int_1^\rho s^{-2}\int_1^s\tau^2k_*^2d\tau ds\leq \int_1^\rho\tau^2k_*^2\int_{\tau}^\rho s^{-2}ds d\tau
		\leq \int_1^\rho \tau k_*^2d\tau\leq C_2.
	\end{align*}
	This implies
	\begin{align*}
		\int_1^\rho \max_\theta\left|\frac{\partial_s G}{G}-\frac{1}{s}\right|ds
		\leq C\int_1^\rho s^{-2}\int_0^1\tau^2k_*^2d\tau ds+C\int_1^\rho s^{-2}\int_1^s\tau^2k_*^2d\tau ds\leq C.
	\end{align*}
	Hence, \eqref{e:bt-integrable} holds.
	
	Finally, we prove \eqref{e:bx-bound}. By \eqref{e:gauss}, we have
	\begin{equation}\label{e:btt-equation}
		\drho\drho\log G=k^2-(\drho\log G)^2.
	\end{equation}
	Differentiating \eqref{e:btt-equation} with respect to $\theta$ yields an equation for $\partial_\theta\partial_\rho\log G$ given by
	$$\partial_\rho(\partial_\theta\partial_\rho\log G)=2k\dthe k-2\partial_\theta\partial_\rho\log G\partial_\rho\log G.$$
	%With $\partial_\theta\partial_\rho\log G(x,0)=0$,
	Solving the above equation directly, we have
	\begin{equation}\label{e:logb-xt}
		\partial_\theta\partial_\rho\log G=\frac{\partial_\theta\partial_\rho\log G(\theta,1)}{G^2}+\frac{2}{G^2}\int_1^\rho k\dthe kG^2ds.
	\end{equation}
	By \eqref{eq-a-bul}, we get $|\dthe a|\leq Ca$, and then $|\dthe k|\leq Ck\leq Ck_*$.
	Thus, by \eqref{e:b-bound} and \eqref{e:k2t-integrable}, for $\rho\geq1,$
	\begin{align*}
		G|\dthe\drho\log G|\leq\frac{C}{G}+\frac{C}{G}\int_1^\rho k_*^2G^2ds
		\leq  C+C\int_1^\rho k_*^2sds\leq C,
	\end{align*}
	since $G$ is increasing in $(0, \infty)$ and $\log G(\theta, 1)$ is a $C^3$ function for $\theta\in[0, 2\pi]$.
	Next,  an integration of \eqref{e:logb-xt} with respect to $\rho$ yields
	\begin{equation}\label{e:logb-x}
		\dthe\log G=\dthe\log G(\theta, 1)+ \int_1^\rho \frac{\partial_\theta\partial_\rho\log G(\theta,1)}{G^2} ds+2\int_1^\rho\frac{1}{G^2}\int_1^sk\dthe kG^2d\tau ds.
	\end{equation}
	Since $a, a^{-1}$ are bounded, by \eqref{e:b-bound}, $|\dthe k|\leq Ck_*$, and an  exchange of the order of integration, we have, for $\rho\geq 1,$
	\begin{equation*}
		\begin{split}
			|\partial_\theta\log G|&\leq C+C\int_1^\rho\frac{1}{s^2}ds+\int_1^\rho\frac{C}{s^2}\int_1^sk_*^2(\tau)\tau^2d\tau ds\\
			&\leq C+C\int_1^\rho k^2_*(\tau)\tau^2\int_\tau^\rho\frac{1}{s^2}dsd\tau\\
			&\leq C+C\int_1^\rho k_*^2\tau d\tau\leq C.
		\end{split}
	\end{equation*}
	A differentiation of \eqref{e:logb-x} with respect to $\theta$ yields
	\begin{align*}
		\dthe^2\log G&=\dthe^2\log G(\theta, 1)+ \int_1^\rho \frac{\partial_\theta^2\partial_\rho\log G(\theta,1)}{G^2} ds
		-2\int_1^\rho \frac{\partial_\theta\partial_\rho\log G(\theta,1)\dthe\log G}{G^2} ds
		\\
		&\qquad -4\int_1^\rho\frac{\dthe\log G}{G^2}\int_1^sk\dthe kG^2d\tau ds
		+2\int_1^\rho\frac{1}{G^2}\int_1^s\dthe(k\dthe kG^2)d\tau ds,
	\end{align*}
	and
	\begin{align*}
		\dthe^3\log G
		&=\dthe^3\log G(\theta, 1)+ \int_1^\rho \frac{\partial_\theta^3\partial_\rho\log G(\theta,1)}{G^2} ds
		-4\int_1^\rho \frac{\partial_\theta^2\partial_\rho\log G(\theta,1)\dthe\log G}{G^2} ds\\
		&\qquad -2\int_1^\rho \frac{\partial_\theta\partial_\rho\log G(\theta,1)\dthe^2\log G}{G^2} ds+4\int_1^\rho \frac{\partial_\theta\partial_\rho\log G(\theta,1)(\dthe\log G)^2}{G^2} ds\\
		&\qquad -4\int_1^\rho\frac{\dthe^2\log G-2(\dthe\log G)^2}{G^2}\int_1^s2k\dthe kG^2d\tau ds\\
		&\qquad-8\int_1^\rho\frac{\dthe\log G}{G^2}\int_1^s\dthe(k\dthe kG^2)d\tau ds
		+2\int_1^\rho\frac{1}{G^2}\int_1^s\dthe^2(k\dthe kG^2)d\tau ds,
	\end{align*}
	where
	$$\dthe(k\dthe kG^2)=[k\dthe^2k+(\dthe k)^2]G^2+2k\dthe kG^2\dthe\log G,$$
	and
	\begin{align*}
		\dthe^2(k\dthe kG^2)&=[k\dthe^3k+3\dthe k\dthe^2k]G^2+4[k\dthe^2k+(\dthe k)^2]G^2\dthe\log G\\
		&\qquad +2k\dthe^2k[G^2\dthe^2\log G+2G^2(\dthe\log G)^2].
	\end{align*}
	By \eqref{eq-a-bul}, we have $|\dthe k|+|\dthe^2k|+|\dthe^3k|\leq Ck_*$, and hence, when $\rho\geq1$, for $i=2, 3$ inductively,
	\begin{equation*}
		|\partial_\theta^i\log G|\leq C.
	\end{equation*}
	Similarly, differentiating \eqref{e:logb-xt} with respect to
	$\theta$, we have,  for $i=2, 3$,
	\begin{align*}
		|G\partial_\theta^{i}\partial_\rho\log G|\leq C.
	\end{align*}
	Therefore,  when $\rho\geq1,$ $\partial_\theta^i\log G$ and $G\partial_\theta^i\partial_\rho\log G$ are bounded, for $i=1, 2, 3.$
\end{proof}
Here in the proof of Lemma \ref{l:G-property} and hereafter, $C$ is a constant that may change from line to line but is independent of $R$.

\section{The Coordinate Transformation}\label{s-transform}

In this section, we will study the coordinate transformation
\begin{equation}\label{F}
F:(x, t)\mapsto(\theta, \rho),\end{equation}
from the geodesic coordinates to the geodesic polar coordinates.

By the definitions of both coordinate systems and the triangle inequality, we immediately have
\begin{equation*}
	t,\,\tfrac12|x|\leq\rho(x, t)\leq t+|x|.
\end{equation*}
By \eqref{e:metric} and \eqref{e:metric-polar}, we also have
$$g=G^2(\rho, \theta)d\theta^2+d\rho^2=B^2(x, t)dx^2+dt^2,$$
and then
\begin{equation}\label{coordinates}
	\rho_t^2+G^2\theta_t^2=1, ~\rho_x^2+G^2\theta_x^2=B^2, ~\rho_t\rho_x+G^2\theta_t\theta_x=0.
\end{equation}
In addition, the geodesic equation of the $t$-curve yields %helps us get
\begin{equation}\label{tgeodesic}
	\rho_{tt}=G_\rho G\theta_t^2.
\end{equation}
Solving $\theta_t$ from the first equation of \eqref{coordinates} and the equation  \eqref{tgeodesic},
we have
\begin{equation}\label{rhot}
	\rho_t=\tanh \Phi, \text{ with } \Phi=\int_0^t\partial_\rho\log Gds, \text{ for } x\neq0.
\end{equation}
Inserting  \eqref{rhot} into  \eqref{coordinates} and \eqref{tgeodesic}, we  get
\begin{equation}\label{thetat}
	\theta_t=\frac{\xi}{G\cosh\Phi}, \quad \rho_x=-\frac{\xi B}{\cosh\Phi}, \quad \theta_x=\frac{B}{G}\tanh\Phi,
\end{equation}
where $\xi=1$ or $-1$.

The function $\Phi$ plays an important role in later studies and has the following property.

\begin{lemma}\label{l:phi}
	Assume that all the conditions in Theorem \ref{thrm-main1} are fulfilled. Then, for $t>0$,
	\begin{equation}\label{e:sinh-cosh-tanh-phi}
		\begin{split}
			&e^{\Phi}\geq\frac{t+|x|}{|x|}\,, \quad 0\leq 1-\tanh\Phi\leq \frac{2|x|}{t+|x|}\,,\\
			&\sinh\Phi\geq \frac{1}{2}\left(\frac{t}{|x|}+\frac{t}{t+|x|}\right), \quad \cosh\Phi\geq\frac{1}{2}\left(\frac{t}{|x|}+1\right).
		\end{split}
	\end{equation}
\end{lemma}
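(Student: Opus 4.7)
The proof will rest entirely on the single structural inequality $\partial_\rho G/G \geq 1/\rho$ from Lemma \ref{l:G-property}, combined with the triangle-inequality bound $\rho(x,s) \leq s+|x|$ obtained by noting $\rho(x,0)=|x|$ and $|\rho_s|\leq 1$ (the latter from the first identity of \eqref{coordinates}). Substituting into the definition \eqref{rhot} of $\Phi$ along the $t$-curve based at $(x,0)$ gives
\begin{equation*}
\Phi \;=\; \int_0^t \partial_\rho \log G\,(\theta(x,s),\rho(x,s))\,ds \;\geq\; \int_0^t \frac{ds}{\rho(x,s)} \;\geq\; \int_0^t \frac{ds}{s+|x|} \;=\; \log\frac{t+|x|}{|x|},
\end{equation*}
which is exactly the first inequality $e^\Phi \geq (t+|x|)/|x|$. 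This is the only analytic input; everything else is an algebraic consequence.

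The remaining three bounds will then be read off from the identity $e^\Phi \geq (t+|x|)/|x|$, equivalently $e^{-\Phi}\leq |x|/(t+|x|)$, via the standard formulas for $\sinh$, $\cosh$, $\tanh$. For $\cosh\Phi \geq \tfrac{1}{2}e^\Phi$, the claim $\cosh\Phi \geq \tfrac12(t/|x|+1)$ is immediate. For the $\tanh$ bound I will write
\begin{equation*}
1-\tanh\Phi \;=\; \frac{2}{e^{2\Phi}+1} \;\leq\; \frac{2|x|^2}{(t+|x|)^2+|x|^2} \;\leq\; \frac{2|x|^2}{(t+|x|)^2} \;\leq\; \frac{2|x|}{t+|x|},
\end{equation*}
the last step using $|x|\leq t+|x|$; nonnegativity of $1-\tanh\Phi$ follows because $\Phi\geq 0$. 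For the $\sinh$ bound, the identity $\sinh\Phi = (e^\Phi - e^{-\Phi})/2$ and the two-sided estimates on $e^{\pm\Phi}$ yield
\begin{equation*}
\sinh\Phi \;\geq\; \frac{1}{2}\left(\frac{t+|x|}{|x|} - \frac{|x|}{t+|x|}\right) \;=\; \frac{1}{2}\cdot\frac{t(t+2|x|)}{|x|(t+|x|)} \;=\; \frac{1}{2}\left(\frac{t}{|x|}+\frac{t}{t+|x|}\right),
\end{equation*}
which is precisely the claimed inequality (equality, in fact, at this level).

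There is no real obstacle here; the proof is a short two-step calculation. The one point that requires a brief justification is the triangle inequality bound $\rho(x,s)\leq s+|x|$ along the $t$-curve, but this follows either from the already-stated inequality $\rho(x,t)\leq t+|x|$ applied at time $s$ or from integrating $|\rho_s|\leq 1$ starting at $\rho(x,0)=|x|$. The argument is symmetric in the sign of $x$, so the bounds written with $|x|$ hold uniformly. Since $\Phi$ is defined only for $x\neq 0$ as in \eqref{rhot}, all the fractions $1/|x|$ appearing in the statement are well-defined.
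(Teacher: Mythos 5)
Your proposal is correct and follows essentially the same route as the paper: the lower bound $\Phi\geq\int_0^t ds/(s+|x|)=\log\frac{t+|x|}{|x|}$ via $\partial_\rho G/G\geq 1/\rho$ and $\rho\leq s+|x|$, followed by the algebraic translation to $\sinh$, $\cosh$, and $\tanh$. The only cosmetic differences are your use of the identity $1-\tanh\Phi=2/(e^{2\Phi}+1)$ in place of the paper's $1-\tanh\Phi\leq 2/(e^{\Phi}+e^{-\Phi})$, and your explicit justification of the triangle inequality, which the paper records separately at the start of Section 5.
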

\begin{proof}
	By the definition of $\Phi$ and Lemma \ref{l:G-property}, we have
	$$\Phi\geq\int_0^t\frac{1}{\rho}ds\geq\int_0^t\frac{1}{s+|x|}ds\geq\log\left(\frac{t+|x|}{|x|}\right).$$
	%which we remark that for $x=0$ also holds.
	Hence,
	$$e^{\Phi}\geq\frac{t}{|x|}+1, \quad e^{-\Phi}\leq\frac{|x|}{t+|x|}\leq 1,$$
	and thus
	\begin{align*}
		&\sinh\Phi=\frac{e^{\Phi}-e^{-\Phi}}{2}\geq \frac{1}{2}\left(\frac{t}{|x|}+1-\frac{|x|}{t+|x|}\right)=\frac{1}{2}\left(\frac{t}{|x|}+\frac{t}{t+|x|}\right),\\
		&\cosh\Phi=\frac{e^{\Phi}+e^{-\Phi}}{2}\geq\frac{1}{2}\left(\frac{t}{|x|}+1\right),\\
		&0\leq 1-\tanh\Phi=1-\frac{e^{\Phi}-e^{-\Phi}}{e^{\Phi}+e^{-\Phi}}\leq\frac{2}{e^\Phi+e^{-\Phi}}\leq \frac{2|x|}{t+|x|}.
	\end{align*}
	Therefore, \eqref{e:sinh-cosh-tanh-phi} holds.
\end{proof}

%\noindent
We note that the Jacobian of the transformation $F$ as in \eqref{F} is given by
\begin{equation}\label{j}
	%\textrm{Jac}F
	\textrm{J}_F=\det\left(
	\begin{matrix}
		\theta_x&\theta_t\\
		\rho_x&\rho_t\\
	\end{matrix}
	\right)=\frac{B}{G}.
\end{equation}
Then,
\begin{equation}\label{xtheta}
	\begin{split}
		x_\theta=\frac{G}{B}\tanh\Phi, \quad & x_\rho=-\frac{\xi}{B\cosh\Phi},\\
		t_\theta=\frac{\xi G}{\cosh\Phi}, \quad & t_\rho=\tanh\Phi.
	\end{split}
\end{equation}
On the other hand, by the second fundamental forms
$$I\!\!I=Ldx^2+2Mdxdt+Ndt^2=\tilde{L}d\theta^2+2\tilde{M}d\theta d\rho+\tilde{N}d\rho^2,$$
we have
\begin{equation*}
	\left(
	\begin{matrix}
		L&M\\
		M&N\\
	\end{matrix}
	\right)=
	\left(
	\begin{matrix}
		\theta_x&\rho_x\\
		\theta_t&\rho_t\\
	\end{matrix}
	\right)
	\left(
	\begin{matrix}
		\tilde{L}&\tilde{M}\\
		\tilde{M}&\tilde{N}\\
	\end{matrix}
	\right)
	\left(
	\begin{matrix}
		\theta_x&\theta_t\\
		\rho_x&\rho_t\\
	\end{matrix}
	\right).
\end{equation*}
We recall  \eqref{1.4.1} and \eqref{1.4.1-1}. Then, the tangents $(r, s)$ 
and $(w, z)$ 
of the asymptotic curves in both coordinates $(x, t)$ and $(\theta, \rho)$ are given by
\begin{align*}
	r=\frac{-M-\kappa B}{L}, &\quad s=\frac{-M+\kappa B}{L},\\
	w=\frac{-\tilde M-kG}{\tilde L}, &\quad z=\frac{-\tilde M+kG}{\tilde L}.
\end{align*}
In addition, we also study the vector fields under the transformation.
A vector field  $V$ in $\R^2$ is said to be a normalized
vector if $V$ is of the form $V=\partial_t+\zeta(x, t)\partial_x.$
The differential map by $F_*(V)$ is defined by
\begin{equation*}
	F_*(V)=(\rho_t+\zeta\rho_x)\partial_\rho+(\theta_t+\zeta\theta_x)\partial_\theta,
\end{equation*}
and then, we have
\begin{equation*}
	F_*(V)=(\rho_t+\zeta\rho_x)(\partial_\rho+\tilde{\zeta}\partial_\theta),
\end{equation*}
with
\begin{equation*}
	\tilde{\zeta}=\frac{\theta_t+\zeta\theta_x}{\rho_t+\zeta\rho_x}
	\text{ provided } \rho_t+\zeta\rho_x\neq0.
\end{equation*}
Define
$$\tilde{F}_*(\zeta)=\frac{\theta_t+\zeta\theta_x}{\rho_t+\zeta\rho_x}
	\quad\text{if } \rho_t+\zeta\rho_x\neq0.$$
	Then, for any $\zeta_1, \zeta_2$,
\begin{equation*}
	\tilde\zeta_1-\tilde\zeta_2=\frac{B(\zeta_1-\zeta_2)}{G(\rho_t+\zeta_1\rho_x)(\rho_t+\zeta_2\rho_x)}.
\end{equation*}
In particular, we have
\begin{align}
	w=\tilde{F}_*(r), \quad z=\tilde{F}_*(s), \label{e:zw-diff}
\end{align}
and hence
\begin{align}
	 z-w=\frac{B(s-r)}{G(\rho_t+r\rho_x)(\rho_t+s\rho_x)}.\label{e:zw-diff1}
\end{align}
The following lemma ensures that $\tilde F_*$ makes sense.

\begin{lemma}\label{l:transform-1}
	Assume that all the conditions in Theorem \ref{thrm-main1} are fulfilled. Let $(p,\, q)$ be the solution obtained in Lemma \ref{l:omega-1}. If $R$ is sufficiently large, then
	\begin{align}
		\rho_t+\frac{p}{B}\rho_x\geq\frac{1}{2},\quad \rho_t+\frac{q}{B}\rho_x\geq\frac{1}{2}\quad\text{on }\partial\Omega_2.\label{e:boundary-pq}
	\end{align}
\end{lemma}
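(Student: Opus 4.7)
My plan is to use the explicit formulas $\rho_t=\tanh\Phi$ and $\rho_x=-\xi B/\cosh\Phi$ from \eqref{rhot}--\eqref{thetat} to rewrite both inequalities in \eqref{e:boundary-pq} as
\[
\tanh\Phi - \frac{\xi p}{\cosh\Phi}\ \ge\ \tfrac12,\qquad \tanh\Phi - \frac{\xi q}{\cosh\Phi}\ \ge\ \tfrac12,
\]
so it suffices to establish the two pointwise estimates
\[
\tanh\Phi\ \ge\ \tfrac34,\qquad \frac{|p|+|q|}{\cosh\Phi}\ \le\ \tfrac14\qquad\text{on }\partial\Omega_2,
\]
once $R$ is chosen large. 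I will check each by splitting $\partial\Omega_2=\partial\Omega_1\cap\{t>0\}$ into the \emph{top} piece $\{\rho(x,t)=R_1,\ x\in[b_-,b_+]\}$, on which $|x|$ is bounded by $\max\{|b_-|,|b_+|\}$ and $t\ge R_1-|x|\ge R$, and the \emph{lateral} piece $\{t=t_0(x)=R(1+x^2)^{\mu/2},\ x\notin(b_-,b_+)\}$ with $\mu>1$.

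For the first estimate, Lemma \ref{l:phi} gives $1-\tanh\Phi\le 2|x|/(t+|x|)$. On the top piece the right-hand side is at most $C/R$, while on the lateral piece
\[
\frac{|x|}{t+|x|}\ \le\ \frac{|x|}{t}\ =\ \frac{|x|}{R(1+x^2)^{\mu/2}}\ \le\ \frac{C}{R}\quad\text{since }\mu>1.
\]
Hence $\tanh\Phi\ge 1-C/R\ge 3/4$ for $R$ large. For the second estimate, Lemma \ref{l:phi} also yields $1/\cosh\Phi\le 2|x|/(t+|x|)$. Combining with the boundary decay $|p|+|q|\le C[(t_0(x)+R)h_2(x)]^{-4}$ from Lemma \ref{l:omega-1}(iii), we obtain
\[
\frac{|p|+|q|}{\cosh\Phi}\ \le\ \frac{C|x|}{(t+|x|)\,(t_0(x)+R)^4\,h_2^4(x)}.
\]
On the top piece $t+|x|\ge R$ and $|x|$ is bounded, giving a bound $\le C/R^5$. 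On the lateral piece $t_0(x)+R\ge R$, $t+|x|\ge t_0(x)\ge R$, and the growth $h_2(x)\ge 1+x^2$ gives $|x|/h_2^4(x)\le 1$, again producing $\le C/R^5$. Choosing $R$ sufficiently large therefore makes this quantity less than $1/4$, which combined with the bound on $\tanh\Phi$ yields both inequalities in \eqref{e:boundary-pq}.

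The main obstacle, and the only delicate bookkeeping step, is verifying the uniform smallness of $(|p|+|q|)/\cosh\Phi$ on the lateral piece, where $|x|$ may be arbitrarily large and the factor $1/\cosh\Phi$ is not a priori small. This is precisely where the construction of $\Omega_1$ pays off: the superlinear exponent $\mu>1$ in the definition of $t_0(x)$ and the quadratic lower bound $h_2(x)\gtrsim 1+x^2$ together absorb the factor $|x|$ coming from $1/\cosh\Phi$ and still leave several powers of $R$ in reserve. Everything else is a routine application of Lemmas \ref{l:omega-1} and \ref{l:phi}, and the argument for $q$ is identical.
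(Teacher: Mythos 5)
Your argument is correct and follows essentially the same route as the paper: both reduce \eqref{e:boundary-pq} to $\tanh\Phi-\xi p/\cosh\Phi$ via \eqref{rhot}--\eqref{thetat}, bound $1-\tanh\Phi$ and $1/\cosh\Phi$ using Lemma \ref{l:phi} (the paper phrases this as $e^{\Phi}\geq R+1$ on $\partial\Omega_2$), and control $|p|,|q|$ by Lemma \ref{l:omega-1}(iii). Your explicit split into the top and lateral pieces just makes visible the bookkeeping the paper leaves implicit.
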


\begin{proof}
	By \eqref{rhot} and \eqref{thetat}, we have
	\begin{align*}
		\rho_t+\frac{p}{B}\rho_x=\tanh\Phi-\frac{p}{B}\frac{\xi B}{\cosh\Phi}\geq 1-|1-\tanh\Phi|-\frac{|p|}{\cosh\Phi}.
	\end{align*}
	Note that $e^{\Phi}\geq R+1$ on $\partial\Omega_2$ and then
	%\begin{align*}
	$|1-\tanh\Phi|\leq\frac{4}{R}.$
	%\end{align*}
	Hence, by Lemma \ref{l:omega-1}(iii),
	\begin{align*}
		\rho_t+\frac{p}{B}\rho_x\geq1-\frac{4}{R}-\frac{C}{R\cosh\Phi}\geq1-\frac{C}{R}\geq\frac{1}{2},
	\end{align*}
	if $R$ is chosen sufficiently large.  We can get
	the second inequality of \eqref{e:boundary-pq} similarly.
\end{proof}

We will derive some estimates of the function $\Phi$ and metrics in geodesic coordinates  under the assumptions \eqref{e:inte-cond}-\eqref{eq-a-bul}, to prepare for the estimates of initial boundary values.
Recall $\delta=\gamma/2$ and  $0<\delta<1/2$.
	
If $\rho^{1+\delta}k_*$ is increasing, we have the following lemma.

\begin{lemma}\label{l:bphi-under-h1-h2} Assume that all the conditions in Theorem \ref{thrm-main1} are fulfilled and $\rho^{1+\delta}k_*$ is increasing, for some $\delta\in (0,1/2)$. Let $c$ be an arbitrary positive constant. Then, for large $t,$
	\begin{enumerate}
		\item if $|x|\leq c, $ then
		\begin{equation}\label{e:b-x-less}
			C^{-1}t\leq B\leq  C t,\quad  |\partial_\theta\log B|\leq  C,
		\end{equation}
		and
		\begin{equation}\label{e:phi-x-less}
			\begin{split}
				& |\partial_\theta\Phi|\leq  Ck_*^2(t+c)(t+c)^2\sinh\Phi,\\
				&\left|\partial_\theta^i\left(\frac1{\sinh\Phi}\right)\right|\leq  Ck_*^2(t+c)(t+c)^2, \quad i=1, 2;
			\end{split}
		\end{equation}
		\item if $|x|\geq c,$ then
		\begin{equation}\label{e:b-x-more}
			1\leq B\leq \frac{ Ct}{|x|}+1,\quad  |\partial_\theta\log B|\leq  C(t+|x|),
		\end{equation}
		and
		\begin{equation}
			\label{e:phi-x-more}
			|\partial_\theta\Phi|\leq  C, \quad \left|\partial_\theta^i\left(\frac1{\sinh\Phi}\right)\right|
			\leq \frac{ C}{\sinh\Phi\tanh^i\Phi},
			\quad i=1, 2.
		\end{equation}
	\end{enumerate}
\end{lemma}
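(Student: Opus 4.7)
The plan is to derive the bounds in two stages: first, control $B$ and its $x$-derivatives through the ODE $\partial_t^2 B=\kappa^2 B$ with $B(x,0)=1$, $\partial_t B(x,0)=0$, using an iteration of the type employed in \cite{H}; second, pass from $(x,t)$-derivatives to $\theta$-derivatives via the chain rule together with \eqref{xtheta} and the integral representation $\Phi=\int_0^t\partial_\rho\log G\,ds$ from \eqref{rhot}.

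For the $B$-bounds, I would start with the integrated form
\[B(x,t)=1+\int_0^t(t-s)\kappa^2(x,s)B(x,s)\,ds,\]
and iterate, using $\kappa^2=a^2k_*^2$ (with $a,a^{-1}$ bounded by \eqref{eq-a-bul}) and the key integrability $\int_0^\infty s k_*^2(s)\,ds<\infty$ from Lemma \ref{l:G-property}. This gives $B(x,t)\leq C(1+t)$ and, together with $\partial_t B\geq 0$, the matching lower bound in case (i) when $|x|\leq c$ (where $\rho\asymp t$ forces $\kappa^2$ to be effectively active on a full interval), and the bound $B\leq Ct/|x|+1$ in case (ii) (where the curvature is essentially inactive until $t\gtrsim |x|$). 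Differentiating the ODE in $x$, and using $|\partial_x\kappa|\leq|\partial_\theta k|\,|\theta_x|+|\partial_\rho k|\,|\rho_x|$ combined with the bounds on $\partial_\theta^i\log G$, $G\partial_\theta\partial_\rho\log G$ in \eqref{e:bx-bound} and $|\partial_\theta k|\leq Ck_*$ from \eqref{eq-a-bul}, a second iteration yields the $|\partial_x\log B|$ and $|\partial_t\log B|$ estimates needed to feed into the chain rule.

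The bounds on $\partial_\theta\log B$ and $\partial_\theta\Phi$ then follow from
\[\partial_\theta=x_\theta\partial_x+t_\theta\partial_t,\qquad x_\theta=\tfrac{G}{B}\tanh\Phi,\qquad t_\theta=\tfrac{\xi G}{\cosh\Phi},\]
inserted from \eqref{xtheta}. In case (i) we have $G/B\asymp 1$ and $\tanh\Phi\to 1$, yielding $|\partial_\theta\log B|\leq C$; in case (ii), $G/B$ may grow like $|x|$ and $G/\cosh\Phi\lesssim |x|$, giving the $C(t+|x|)$ bound. For $\partial_\theta\Phi$, chain-rule differentiation under the integral sign expresses the integrand in terms of $\partial_\theta\partial_\rho\log G$, $\partial_\rho^2\log G=k^2-(\partial_\rho\log G)^2$ (from \eqref{e:btt-equation}), and the derivatives $\theta_x,\rho_x$ of \eqref{thetat}; combined with \eqref{e:bx-bound} and $\sinh\Phi$ growing like $t/|x|$ from Lemma \ref{l:phi}, this produces the stated estimates. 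The estimates for $\partial_\theta^i(1/\sinh\Phi)$ then follow from the identity $\partial_\theta(1/\sinh\Phi)=-\cosh\Phi\,\partial_\theta\Phi/\sinh^2\Phi$ and one further differentiation.

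The main obstacle I expect is the sharp prefactor $k_*^2(t+c)(t+c)^2\sinh\Phi$ in case (i): obtaining it requires using the monotonicity of $\rho^{1+\delta}k_*$ together with the finite total curvature integrability to show that the cumulative integral over $[0,t]$ is controlled by its endpoint value times $\sinh\Phi$, rather than by $\cosh\Phi$ or a weaker factor. This amounts to an oscillation-robust version of Hong's iteration in \cite{H}, where the cancellation in \eqref{e:bt-over-t} (i.e.\ $\partial_\rho G/G-1/\rho$ being integrable) is what allows the concentration of contributions near the $t$-endpoint of the $\rho$-integration.
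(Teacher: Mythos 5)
Your overall architecture (Hong-type iteration on the integral equation for $B$, then passage to $\theta$-derivatives) matches the paper's for the bounds \eqref{e:b-x-less} and \eqref{e:b-x-more}, but two of your steps have genuine gaps. First, your route to $|\partial_\theta\log B|$ goes through $|\partial_x\kappa|\leq|\partial_\theta k||\theta_x|+|\partial_\rho k||\rho_x|$, which requires a pointwise bound on $\partial_\rho k=k_*\partial_\rho a+a\,k_*'$. The hypotheses give no such bound: \eqref{eq-a-bv} only controls $\int\max_\theta|\partial_\rho a|\,d\rho$, and the monotonicity of $\rho^{1+\delta}k_*$ bounds $k_*'$ from one side only. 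The paper avoids differentiating $k$ in $\rho$ or $x$ altogether: it applies $\partial_\theta$ directly to the integral representation of $B$, so that only $\partial_\theta^i k\leq Ck_*$ (from \eqref{eq-a-bul}) ever appears, and closes a Gronwall/iteration inequality for $|\partial_\theta B|$ itself. Relatedly, your lower bound $B\geq C^{-1}t$ does not follow from ``$\partial_tB\geq 0$''; one needs $\partial_tB=\int_0^tB\kappa^2\,ds\geq\frac1C\int_0^tk_*^2\,ds\geq c_0>0$ for large $t$, which is how the paper argues.

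The more serious gap is \eqref{e:phi-x-less}. Differentiating $\Phi=\int_0^t\partial_\rho\log G\,ds$ under the integral sign in case (i) cannot produce the stated bound: the resulting integrand contains $\partial_\rho^2\log G\cdot\rho_x=(k^2-(\partial_\rho\log G)^2)\rho_x\sim -\rho^{-2}$ near $s=0$, where $\rho(x,s)\approx|x|$, so the naive estimate degenerates as $|x|\to0$ and in any case only sees absolute values, not the cancellation that makes $\partial_\theta\Phi=o(\sinh\Phi)$. The paper instead derives the closed-form endpoint identity \eqref{e:phi-1-derivative},
$\partial_\theta\Phi=\xi\partial_\rho G/\cosh\Phi+\xi(\tanh\Phi\,\partial_\rho\log G-\partial_t\log B)G\sinh\Phi$,
from $\rho_t=\tanh\Phi$, the geodesic equation \eqref{tgeodesic}, and \eqref{xtheta}; the whole point is that the large factor $G\sinh\Phi$ multiplies the small difference $\tanh\Phi\,\partial_\rho\log G-\partial_t\log B$, which is split as $(\tanh\Phi-1)\partial_\rho\log G+(\partial_\rho\log G-\rho^{-1})+(\rho^{-1}-t^{-1})+(t^{-1}-\partial_t\log B)$ and bounded by $Ck_*^2(t+c)(t+c)$ using \eqref{e:bt-over-t}, \eqref{e:bt-diff}, and the lower bound $k_*(t)\gtrsim t^{-1-\delta}$ with $\delta<1/2$ coming from the monotonicity. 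You correctly name the cancellation $\partial_\rho G/G-1/\rho$ as essential, but your mechanism (``concentration near the $t$-endpoint of the $\rho$-integration'') is not how it enters, and without the closed-form identity the argument does not go through. Finally, the $i=2$ case of \eqref{e:phi-x-less} additionally needs $|\partial_\theta\partial_t\log B|\leq Ck_*^2(t+c)(t+c)$, which the paper obtains from a separate Gronwall argument on $H=\partial_t\log B$ (equation \eqref{e:bt2}); your proposal does not address this term at all.
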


\begin{proof} The proof consists of several steps.
	
	{\it Step 1. We prove \eqref{e:b-x-less}.} By $|x|\leq c$, it is easy to see
	$$t\leq \rho(x, t)\leq t+|x|\leq t+c.$$
	Since $k_*(\rho)\rho^{1+\delta}$ is increasing for $\rho\geq \lambda_0$ for some large constant $\lambda_0$, we have
	\begin{equation*}%\label{e:k-1}
		\begin{split}
			k_*(\rho)\geq k_*(t)\frac{t^{1+\delta}}{\rho^{1+\delta}}\geq k_*(t)\frac{t^{1+\delta}}{(t+c)^{1+\delta}},
		\end{split}
	\end{equation*}
	and
	\begin{equation*}%\label{e:k-2}
		\begin{split}
			k_*(\rho)\leq k_*(t+c)\frac{(t+c)^{1+\delta}}{\rho^{1+\delta}}\leq k_*(t+c)\frac{(t+c)^{1+\delta}}{t^{1+\delta}}.
		\end{split}
	\end{equation*}
	Note that  $\kappa(x, t)=k(\theta, \rho)=a(\theta, \rho)k_*(\rho).$ Since $C_1^{-1}\leq a\leq C_1$, we have
	\begin{equation}\label{eq-kappa-bound}
		\frac{1}{C}\leq\frac{\kappa(x, t)}{k_*(\rho)}\leq C,
	\end{equation}
	and then, for $|x|\leq c$ and $t\geq \lambda_0$, we get
	\begin{equation}\label{e:major-kappa}
		\frac{k_*(t)}{C}\leq \kappa(x, t)\leq Ck_*(t+c).
	\end{equation}
	By \eqref{eq-a-bul}, \eqref{e:major-kappa}, and \eqref{e:k2t-integrable}, we have, for $|x|\leq c,$
	\begin{equation}
		\label{e:kt-inte}
		\int_0^\infty s\kappa^2(x, s)ds\leq C+ C\int_{\lambda_0}^\infty k_*^2(s+c)sds\leq C+C\int_{\lambda_0+c}^\infty (k_*^2(s)s+k_*^2(s))ds\leq C,
	\end{equation}
	and
	\begin{equation}\label{e-kappa}
		\begin{split}
			&\int_0^t  \kappa^2(x, s)ds\geq\int_0^t\frac1Ck_*^2(s)ds,\\
			&\int_0^t  \kappa^2(x, s)ds\leq C+\int_1^\infty s\kappa^2(x, s)ds\leq C.
		\end{split}
	\end{equation}
	Recall (1.8) in \cite{H}:
	\begin{equation}\label{eq-hong-1.8}
		\int_0^t\kappa^2(x, s)ds\leq \dt B\leq \int_0^t\kappa^2(x, s)ds\exp\left\{\int_0^ts\kappa^2(x, s)sds\right\}.
	\end{equation}
	By \eqref{e:kt-inte}, \eqref{e-kappa}, and \eqref{eq-hong-1.8}, we obtain
	\begin{equation}\label{e-bt}
		C_5\leq \dt B\leq C_6,
	\end{equation}
	for some positive constant $C_5$ and $C_6$. An integration yields
	\begin{equation}\label{eq-B-low-upper}
		C^{-1}t\leq B\leq Ct,
	\end{equation}
	if $t$ is large. This is the first part of \eqref{e:b-x-less}.
	
	To estimate $\partial_\theta\log B=\frac{\partial_\theta B}{B}$, we  bound $\partial_\theta B$ first. In fact, by integrating \eqref{e:gauss-b}, we have
	\[B=B(x, \lambda_0)+\dt B(x, \lambda_0)(t-\lambda_0)+\int_{\lambda_0}^t\int_{\lambda_0}^sk^2Bd\tau ds,\]
	and hence, after differentiating with respect to $\theta$,
	\begin{equation}\label{eq-blam0}
		\begin{split}
			\dthe B&=\dx B(x, \lambda_0)x_\theta+\partial_{xt}^2B(x, \lambda_0)x_\theta(t-\lambda_0)+\dt B(x, \lambda_0)t_\theta\\
			&\qquad+\int_{\lambda_0}^t\int_{\lambda_0}^sk^2\dthe Bd\tau ds+\int_{\lambda_0}^t\int_{\lambda_0}^s2k\dthe kBd\tau ds+t_\theta\int_{\lambda_0}^tk^2Bd\tau.
		\end{split}
	\end{equation}
	For large $\rho\geq t$, by \eqref{e:sinh-cosh-tanh-phi}, we have %when $\rho\geq t\geq R$,
	$$e^{\Phi}\geq\frac{\rho}{c}, \quad 0\leq e^{-\Phi}\leq 1,$$
	and then, for $|x|\leq c$ and $t$ large,
	\begin{equation}\label{e:sinh-cosh-tanh-phi-1}
		\begin{split}
			&\sinh\Phi\geq \frac{\rho}{4c},\quad \cosh\Phi\geq\frac{\rho}{2c},\quad
			1-\frac{2c}{\rho}\leq\tanh\Phi\leq1. %\quad1\leq\coth\Phi\leq\frac{\rho}{\rho-2c}.
		\end{split}
	\end{equation}
	This implies
	\begin{equation}\label{e:t-theta-bound}
		|x_\theta|=\frac{G}{B}\tanh\Phi\leq C,\quad |t_\theta|=\frac{G}{\cosh\Phi}\leq \frac{C\rho}{\rho/2c}\leq C.
	\end{equation}
	Note that $\dt B(x, \lambda_0),\,\dx B(x, \lambda_0)$, and $\partial_{xt}^2B(x, \lambda_0)$ are bounded for $|x|\leq c$. In addition, for $\rho\geq\lambda_0,$ we also have $|\dthe k|\leq Ck_*(\rho).$ By \eqref{e-bt}, we get $B\leq Ct+1.$ Hence by \eqref{eq-blam0}, we obtain
	\begin{equation}\label{e-Btheta}
		\begin{split}
			|\dthe  B|&\leq C\int_{\lambda_0}^t\int_{\lambda_0}^s k_*^2(\tau+c)|\dthe B|d\tau ds+C\int_{\lambda_0}^t\int_{\lambda_0}^s  k_*^2\tau d\tau ds+Ct+C\\
			&\leq C\int_{\lambda_0}^t\int_{\lambda_0}^s  k_*^2(\tau +c)|\dthe B|d\tau ds+Ct+C.
		\end{split}
	\end{equation}
	Set
	$$\mathfrak{B}(t)=\int_{\lambda_0}^t\int_{\lambda_0}^s  k_*^2(\tau+c)|\dthe B|d\tau ds+Ct+C.$$
	Then, $$\dt^2 \mathfrak{B}=k_*^2(t+c)|\dthe B|, \quad |\dthe B|\leq C\mathfrak{B},$$
	and thus
	$$\dt^2\mathfrak{B}\leq Ck_*^2(t+c)\mathfrak{B}.$$
	Applying the same iteration method of deriving (1.8) in \cite{H} yields
	\begin{align*}
		\dt \mathfrak{B}\leq C\int_{\lambda_0}^tk_*^2(s+c)ds \exp\left\{\int_{\lambda_0}^tsk_*^2(s+c)ds\right\}\leq C.
	\end{align*}
	An integration of the above inequality implies $|\dthe B|\leq \mathfrak{B}\leq Ct$ for $t\geq\lambda_0.$
	Finally by \eqref{eq-B-low-upper}, we have, for $t$ large,
	\begin{equation*}
		%\label{e:logb-theta-bound}
		|\partial_\theta\log B|\leq\frac{|\dthe B|}{B}\leq C.
	\end{equation*}
	Hence, the second part of  \eqref{e:b-x-less} is proved.
	
	{\it Step 2. We prove \eqref{e:phi-x-less}.}
	%{\sf Proof of \eqref{e:phi-x-less}:}
	For the estimate of $\partial_\theta\Phi$, by \eqref{tgeodesic}-\eqref{xtheta}, we have
	\begin{equation}\label{e:phi-1-derivative}
		\begin{split}
			\partial_\theta\Phi&=GG_\rho\theta_t^2t_\theta\cosh^2\Phi+(GG_\rho\theta_x\theta_t-B_tG\theta_t)x_\theta\cosh^2\Phi\\
			&=\frac{\xi \partial_\rho G}{\cosh\Phi}+\xi (\tanh\Phi\partial_\rho\log G-\dt\log B)G\sinh\Phi.
		\end{split}
	\end{equation}
	%which is from \cite{H}.
	Then,
	\begin{equation}\label{e-bound-phi}
		\begin{split}
			|\partial_\theta\Phi|&\leq
			\frac{\partial_\rho G}{\cosh\Phi}+(|\tanh\Phi-1|\partial_\rho\log G+|\rho^{-1}-t^{-1}|)G\sinh\Phi\\
			&\qquad +(|\partial_\rho\log G-\rho^{-1}|+|t^{-1}-\dt\log B|)G\sinh\Phi.
		\end{split}
	\end{equation}
	In \eqref{e-bound-phi}, $|\dt\log B-t^{-1}|$ is the key term. To control it, we integrate \eqref{e:gauss-b} to get
	\begin{equation}\label{eq-bt}	
		\dt B(x, t)=\int_0^t(B\kappa^2)(x, s)ds,
	\end{equation}
	and
	\begin{align*}
		B&=1+\int_0^t\int_0^s(B\kappa^2)(x, \tau)d\tau ds\\
		&=1+t\int_0^t(B\kappa^2)(x, s)ds-\int_0^ts(B\kappa^2)(x, s)ds.
	\end{align*}
	Hence,
	\begin{align*}
		\frac{\dt B}{B}-\frac{1}{t}
		&=\frac{-t^{-2}+t^{-2}\int_0^tsB\kappa^2ds}{\int_0^tB\kappa^2ds+t^{-1}-t^{-1}\int_0^tsB\kappa^2ds}\\
		&=\frac{-t^{-2}+t^{-2}\int_0^tsB\kappa^2ds}{t^{-1}+\int_0^t(1-\frac{s}{t})B\kappa^2ds}.
	\end{align*}
	By \eqref{e:major-kappa} and the increasing property of  $k_*(\rho)\rho^{1+\delta}$ for $\rho>\lambda_0$, we further deduce, for $t>0,$
	\begin{equation}\label{e:bt-diff}
		\begin{split}
			\left|\frac{\dt B}{B}-\frac{1}{t}\right|
			&\leq \frac{t^{-2}+Ct^{-2}\int_0^ts^2k^2_*(s+c)ds}{t^{-1}+C^{-1}\int_0^tBk_*^2ds-Ct^{-1}\int_0^tsk_*^2(s)ds}\\
			&\leq \frac{C}{t^2}\left(1+\int_0^ts^2k^2_*(s+c)ds\right),\\
			%	&\leq Ck^2_*(t+c)(t+c),
		\end{split}
	\end{equation}
	where we used
	$$\lim_{t\to\infty}\frac1t\int_0^tk_*^2(s)s^2ds=0 \text{ (since } \lim_{t\to\infty}k_*(t)t=0)$$
	to get the second inequality. Hence, for $t$ sufficiently large,
	\begin{equation}\label{e-bt1t}
		|\dt\log B-t^{-1}|\leq Ck^2_*(t+c)(t+c).
	\end{equation}
	In addition, by \eqref{e:bt-over-t}, we get, for large $\rho$,
	\begin{equation}\label{eq-logg-rho-difference}
		|\partial_\rho\log G-\rho^{-1}|\leq \frac{C}{\rho^{2}}\int_0^\rho s^2k_*^2(s)ds\leq Ck_*^2(\rho)\rho.
	\end{equation}
	By Lemma \ref{l:G-property}, \eqref{e-bound-phi}, \eqref{e-bt1t}, and \eqref{eq-logg-rho-difference}, we have, for $t$ large,
	\begin{equation}\label{e:phi-derivative-bound}
		\begin{split}
			|\partial_\theta\Phi|&\leq
			C\rho^{-1}+C(\rho^{-1}+k_*^2(\rho)\rho^2+k_*^2(t+c)(t+c)^2)\sinh\Phi\\
			&\leq  C(k_*^2(\rho)\rho^2+k_*^2(t+c)(t+c)^2)\sinh\Phi\\
			&\leq Ck_*^2(t+c)(t+c)^2\sinh\Phi.
		\end{split}
	\end{equation}
	Moreover, it follows from \eqref{e:sinh-cosh-tanh-phi-1} and \eqref{e:phi-1-derivative} that, for large $t$,
	\begin{equation}\label{e:sinhphi-theta}
		\left|\partial_\theta\left(\frac{1}{\sinh\Phi}\right)\right|= \left|-\frac{\cosh\Phi}{\sinh^2\Phi}\partial_\theta\Phi\right|\leq \frac{C|\partial_\theta\Phi|}{\sinh\Phi}\leq Ck^2_*(t+c)(t+c)^2.
	\end{equation}
	By \eqref{e:phi-1-derivative}, we have
	\begin{equation*} \partial_\theta\left(\frac{1}{\sinh\Phi}\right)=-\frac{\cosh\Phi}{\sinh^2\Phi}\partial_\theta\Phi
		=\frac{-\xi\partial_\rho G}{\sinh^2\Phi}-\xi(\tanh\Phi\partial_\rho\log G-\partial_t\log B)G\coth\Phi.
	\end{equation*}
	Taking one more derivative, we further get
	\begin{equation}\label{e:1-sinhphi-derivative-2}
		\begin{split}
			\partial_\theta^2\left(\frac{1}{\sinh\Phi}\right)&=\frac{-\xi\partial_\theta\partial_\rho G}{\sinh^2\Phi}
			+\frac{2\xi\partial_\rho G\cosh\Phi\partial_\theta\Phi}{\sinh^3\Phi}
			+\xi G\coth\Phi\cdot\partial_\theta(\dt\log B)\\
			&\qquad -\xi (\tanh\Phi\partial_\rho\log G-\dt\log B)(\partial_\theta G\coth\Phi
			-G\partial_\theta\Phi\sinh^{-2}\Phi)\\
			&\qquad -\xi(\partial_\theta\Phi\partial_\rho\log G\cosh^{-2}\Phi+\tanh\Phi\partial_\rho\partial_\theta\log G)G\coth\Phi.
		\end{split}
	\end{equation}
	Note that $|\dthe(\dt\log B)|$ is the key term in \eqref{e:1-sinhphi-derivative-2}.  Rewrite the Gauss equation \eqref{e:gauss-b} as
	$$\dt\dt\log B+(\dt\log B)^2=\frac{\dt^2 B}{B}=\kappa^2=k^2.$$
	Setting $H=\dt\log B$, by \eqref{e:kt-inte}, \eqref{eq-B-low-upper} and \eqref{eq-bt}, we get $H\leq Ct^{-1}$ for $t>0$. Obviously,
	$$\dt H+H^2=k^2. $$
	It also follows that
	$$\dt(t^2H)-2tH+t^2H^2=t^2k^2.$$
	Integrating the above equation yields
	\begin{equation}
		\label{e:bt2}
		t^2H=\lambda_0^2H(x, \lambda_0)+\int_{\lambda_0}^t\big(s^2k^2-s^2H^2+2sH\big)ds.
	\end{equation}
	Differentiating \eqref{e:bt2} with respect to $\theta$ and rearranging the terms on the right-hand side, we further have
	\begin{align*}
		t^2\partial_\theta H+2tHt_\theta
		&=\lambda_0^2\partial_xH(x,\lambda_0)x_\theta+\big(t^2k^2-t^2H^2+2tH\big)t_\theta\\
		&\qquad+\int_{\lambda_0}^t2(s^2\partial_\theta H)\left(\frac{1}{s}-H\right)ds+\int_{\lambda_0}^ts^22k\partial_\theta k ds.
	\end{align*}
	Since $\partial_xH(x,\lambda_0)$ is bounded for $|x|\leq c$, using $G\leq C\rho,$ $H\leq Ct^{-1}$, \eqref{e:t-theta-bound}, and, for $\rho\geq\lambda_0,$
	$$t^2|\partial_\theta k|\leq C\rho^2k_*(\rho)\leq C(t+c)^2k_*(t+c),$$
	we derive
	\begin{equation}\label{e:e:t-b-theta}
		\big(t^2|\partial_\theta H|\big)\leq \eta(t)+\int_{\lambda_0}^t\zeta(s)\cdot\big(s^2|\partial_\theta H|\big)ds,
	\end{equation}
	where
	\begin{align*}
		\eta(t)=C+C\int_{\lambda_0}^t(c+s)^2k_*^2(s+c) ds,\quad 	\zeta(t)=\left|\frac{2}{t}-2H\right|.
	\end{align*}
	Since $\eta(t)$ is an increasing function of $t$, we may apply Gronwall's inequality to \eqref{e:e:t-b-theta} to get
	\begin{align*}
		t^2|\partial_\theta H|\leq \eta(t)\exp\left\{\int_{\lambda_0}^t\zeta(s)ds\right\}
		\leq C\left(1+\int_{\lambda_0}^t(c+s)^2k_*^2(s+c)ds\right),
	\end{align*}
	where we used \eqref{e:bt-diff} to deduce
	\begin{align*}
		\int_{\lambda_0}^t\zeta(s)ds\leq 2\int_{\lambda_0}^t\left|\frac{1}{s}-H\right|ds\leq C.
	\end{align*}
	Therefore, we have
	\begin{equation}
		\label{e:bttheta-bound}
		\begin{split}
			|\partial_\theta(\dt\log B)|&=|\partial_\theta H|\leq
			\frac{C}{t^2}\left(1+\int_0^t(c+s)^2k_*^2(s+c)ds\right)\\
			&\leq  Ck_*^2(t+c)(t+c).
		\end{split}
	\end{equation}
	Furthermore, by \eqref{e:logb-xt}, we have, for $\rho\geq1,$
	\begin{equation}\label{loggrhothe}
		\begin{split}
			|\partial_\theta\partial_\rho\log G|&\leq\frac{|\dthe\drho\log G(\theta, 1)|}{G^2}+\frac{1}{G^2}\int_1^\rho k|\partial_\theta k|G^2ds\\
			&\leq  \frac{C}{\rho^2}+\frac{C}{\rho^2}\int_1^\rho k_*^2(s)s^2ds.
		\end{split}
	\end{equation}
	Then, for large $\rho$,
	\begin{equation}\label{e:log-g-rho-theta}
		|\partial_\theta\partial_\rho\log G|\leq Ck_*^2(\rho)\rho.
	\end{equation}
	Thus, by using \eqref{e:sinh-cosh-tanh-phi-1}, \eqref{eq-logg-rho-difference}, \eqref{e:bt-diff},
	\eqref{e:phi-derivative-bound}, \eqref{e:1-sinhphi-derivative-2}, \eqref{e:bttheta-bound}, and \eqref{e:log-g-rho-theta}, we obtain, for $t$ large enough,	
	\begin{equation}\label{e:sinhphi-theta-2}
		\begin{split}
			\left|\partial_\theta^2\left(\frac{1}{\sinh\Phi}\right)\right|&\leq \frac{C}{\rho^2}+\frac{Ck^2_*(t+c)(t+c)^2}{\sinh\Phi}+Ck_*^2(\rho)\rho^2+Ck_*^2(t+c)(t+c)^2\\
			&\leq Ck_*^2(t+c)(t+c)^2,
		\end{split}
	\end{equation}
	Combining \eqref{e:sinhphi-theta} and \eqref{e:sinhphi-theta-2}, we get \eqref{e:phi-x-less}.

	{\it Step 3. We prove \eqref{e:b-x-more}.}
	Note that $\rho^{1+\delta}k_*(\rho)$ is increasing if $\rho\geq \lambda_0$. By the triangle inequality: $\tfrac13(t+|x|)\leq \rho\leq t+|x|,$
	we can drive
	\begin{equation}\label{eq-k*-bound}
		k_*(\rho)\leq k_*(t+|x|)\frac{(t+|x|)^{1+\delta}}{\rho^{1+\delta}}\leq Ck_*(t+|x|).
	\end{equation}
	Hence, by \eqref{e:k2t-integrable}, we get, for any $|x|\geq c,$
	\begin{equation}\label{eq-krho-inte}
		\int_{\lambda_0}^t(\rho k_*^2(\rho))(s,x)ds\leq C\int_{\lambda_0}^t(s+|x|)k_*^2(s+|x|)ds=C\int_{\lambda_0+|x|}^{t+|x|}sk_*^2(s)ds\leq C.
	\end{equation}
	For any $|x|\geq2\lambda_0,$ $\rho\geq\frac12|x|\geq \lambda_0.$ Then by \eqref{eq-k*-bound}, we get
	\begin{equation}\label{eq-krho-inte-1}
		\int^{\lambda_0}_0(\rho k_*^2(\rho))(s,x)ds\leq C\int^{\lambda_0}_0(s+|x|)k_*^2(s+|x|)ds=C\int^{\lambda_0+|x|}_{|x|}sk_*^2(s)ds\leq C.
	\end{equation}
	Note that $\{c\leq|x|\leq2\lambda_0, 0\leq t\leq \lambda_0\}$ is a bounded region. If $c\leq |x|\leq 2\lambda_0,$ then  $$\int^{\lambda_0}_0\rho k_*^2(\rho)ds\leq C.$$
	Thus, combining \eqref{eq-krho-inte} and \eqref{eq-krho-inte-1}, we further have, for any $|x|\geq c,$
	\begin{equation}\label{eq-krho-inte-0}
		\int_0^t\rho k_*^2(\rho)ds\leq \int_0^{\lambda_0}\rho k_*^2(\rho)ds+\int_{\lambda_0}^t\rho k_*^2(\rho)ds\leq C.
	\end{equation}
	It follows from \eqref{eq-kappa-bound} and \eqref{eq-krho-inte-0} that
	\begin{equation}\label{eq-kappa2-integrable}
		\int_{0}^ts\kappa^2(x, s)ds\leq C\int_{0}^t\rho k^2_*(\rho)ds\leq C,
	\end{equation}
	and
	\begin{equation}\label{eq-kappa2-upper}
		\int_0^t\kappa^2(x, s)ds\leq C\int_0^t k^2_*(\rho)ds\leq \frac{C}{|x|}\int_0^t\rho k^2_*(\rho)ds\leq \frac{C}{|x|}.
	\end{equation}
	In view of \eqref{eq-hong-1.8}, \eqref{eq-kappa2-integrable}, and \eqref{eq-kappa2-upper}, we have
	\begin{equation*}%\label{eq-bt-bound}
		0\leq \dt B\leq \frac{C}{|x|}.
	\end{equation*}
	This implies the first inequality in \eqref{e:b-x-more} by integration over $[0, t]$.
	
	Next we estimate $\dthe\log B.$ First, for $|x|\leq 2\lambda_0$, we have $|\dthe\log B|\leq C$ as in the proof of \eqref{e:b-x-less}. Thus, we only need to prove for the case $|x|\geq2\lambda_0,$ in which we have $\rho\geq\lambda_0$ and then
	$$|t_\theta|=\frac{G}{\cosh\Phi}\leq \frac{C\rho|x|}{t+|x|}\leq C|x|,\quad  B\leq \frac{Ct}{|x|}+1, \quad k\leq Ck_*(t+|x|).$$
	Note that
	$$B=1+\int_0^t\int_0^sk^2 Bd\tau ds.$$
	Then,
	\begin{equation}\label{eq-b-theta}
		\partial_\theta B=\int_0^t\int_0^sk^2\dthe B d\tau ds+\int_0^t\int_0^s(\dthe k^2)B d\tau ds+t_\theta\int_0^t k^2 Bd\tau,
	\end{equation}
	and hence
	\begin{align*}
		|\dthe  B|&\leq C\int_0^t\!\int_0^s k_*^2(\tau+|x|)|\dthe B|d\tau ds+C\int_0^t\!\int_0^s k_*^2(\tau+|x|)\tau d\tau ds+C|x|\int_0^tk_*^2(\tau+|x|)\tau d\tau\\
		&\leq C\int_0^t\int_0^s k_*^2(\tau +|x|)|\dthe B|d\tau ds+C(t+|x|).
	\end{align*}
	Set
	$$\tilde F=\int_0^t\int_0^s k_*^2(\tau+|x|)|\dthe B|d\tau ds+C(t+|x|).$$
	Then,  $$\dt^2\tilde F=k_*^2(t+|x|)|\dthe B|\leq Ck_*^2(t+|x|)\tilde F.$$
	Thus, we can also employ the same iteration approach for deriving (1.8) in \cite{H}  to deduce
	\begin{align*}
		\dt\tilde F\leq C\int_0^tk_*^2(s+|x|)ds \exp\left\{\int_0^tCsk_*^2(s+|x|)ds\right\}\leq C.
	\end{align*}
	An integration yields $\tilde F\leq C(t+|x|)$. Then,  $|\dthe B|\leq \tilde F\leq C(t+|x|).$
	By $B\geq1$, we have
	\begin{equation*}
		%\label{e:logb-theta-bound-1}
		|\partial_\theta\log B|\leq\frac{|\dthe B|}{B}\leq C(t+|x|).
	\end{equation*}
	We obtain the second inequality in \eqref{e:b-x-more}.
	
	{\it Step 4. We prove \eqref{e:phi-x-more}.}
	%{\sf Proof of \eqref{e:phi-x-more}:}
	By $\rho\geq\frac12|x|\geq\frac c2,$ $\Phi$ has no singularity. Thus, by differentiating $\Phi$
	in \eqref{rhot} with respect to  $\theta$, we have
	\begin{equation}\label{e:phi-theta-x-large}
		\begin{split}
			\partial_\theta\Phi=\frac{\xi\partial_\rho G}{\cosh\Phi}+\int_0^t\partial_\theta\partial_\rho\log G d\tau.
		\end{split}
	\end{equation}
	Recall that, for $|x|\geq c$ and $  t>0$,
	\begin{equation}\label{e:sinh-cosh-tanh-phi-2}
		\sinh\Phi\geq \frac{1}{2}\left(\frac{t}{|x|}+\frac{t}{t+|x|}\right),\quad
		\cosh\Phi\geq\frac{1}{2}\left(\frac{t}{|x|}+1\right).
		%&0\leq 1-\tanh\Phi=1-\frac{e^{\Phi}-e^{-\Phi}}{e^{\Phi}+e^{-\Phi}}\leq\frac{2}{e^\Phi+e^{-\Phi}}\leq \frac{2|x|}{t+|x|}.
	\end{equation}
	In addition, similarly as in \eqref{loggrhothe},  from $\rho>\frac12c$ and $\rho(x,t)\geq\frac13(t+c)$, we have
	\begin{equation}\label{logg1}
		\int_0^t|\partial_\theta\partial_\rho\log G|d\tau\leq \int_0^t\left(\frac{C}{\rho^2}+\frac{C}{\rho^2}\int_{\frac12c}^\rho k_*^2(s)s^2ds\right)d\tau\leq C.
	\end{equation}
	By Lemma \ref{l:G-property} and \eqref{logg1},  we derive
	$$|\partial_\theta\Phi|\leq\frac{C|x|}{t+|x|}+C\leq C.$$
	%Besides, from \eqref{e:phi-theta-x-large}, $\partial_\theta\Phi$ can be viewed as a function of $(\theta, t).$
	Differentiating \eqref{e:phi-theta-x-large} with respect to $\theta$ implies
	\begin{align*}
		\partial_\theta^2\Phi
		=&\int_0^t\dthe^2\drho\log G d\tau+\dthe\drho\log G\frac{\xi G}{\cosh\Phi}+\frac{\xi\dthe\drho G}{\cosh\Phi}-\frac{\xi \drho G\sinh\Phi}{\cosh^2\Phi}\dthe\Phi.
	\end{align*}
	Following the same way for \eqref{logg1}, we can show
	$$\int_0^t|\dthe^2\drho\log G|\leq C.$$
	Thus, by \eqref{eq-krho-inte-0}, \eqref{e:sinh-cosh-tanh-phi-2}, and Lemma \ref{l:G-property},  we get, for large $t,$
	\begin{align*}
		|\partial^2_\theta\Phi|\leq C+\frac{Ck_*^2(\rho)\rho^2|x|}{t+|x|}+C+\frac{C|x|}{t+|x|}\leq C.
	\end{align*}
	Moreover, differentiating $\frac1{\sinh\Phi}$ with respect to $\theta,$ we also have
	\begin{align*}
		\left|\partial_\theta\left(\frac{1}{\sinh\Phi}\right)\right|&\leq\frac{\cosh\Phi}{\sinh^2\Phi}|\partial_\theta\Phi|\leq \frac{C}{\sinh\Phi\tanh\Phi},\\
		\left|\partial_\theta^2\left(\frac{1}{\sinh\Phi}\right)\right|&\leq\frac{\cosh\Phi}{\sinh^2\Phi}|\partial_\theta^2\Phi|+\frac{|\partial_\theta\Phi|^2|\tanh^2\Phi-2|}{\sinh\Phi\tanh^2\Phi}\leq \frac{C}{\sinh\Phi\tanh^2\Phi}.
	\end{align*}
	Therefore, we conclude \eqref{e:phi-x-more}.
\end{proof}

We next establish similar estimates for the case that $\rho^{1+\delta}k_*$ is decreasing.

\begin{lemma}\label{l:bphi-under-h1-h2-1} Assume that all the conditions in Theorem \ref{thrm-main1} are fulfilled and $\rho^{1+\delta}k_*$ is decreasing, for some $\delta\in (0,1/2)$. Let $c$ be an arbitrary positive constant. Then, for large $t$,
	\begin{enumerate}
		\item if $|x|\leq c,$
		\begin{equation}\label{e:b-x-less-1}
			C^{-1}{t}\leq B\leq  Ct,\quad  |\partial_\theta\log B|\leq  C,
		\end{equation}
		and
		\begin{equation}\label{e:phi-x-less-1}
			\begin{split}
				|\partial_\theta\Phi|\leq  C\rho^{-\delta}\sinh\Phi,\quad
				\left|\partial_\theta^i\left(\frac1{\sinh\Phi}\right)\right|\leq  C\rho^{-\delta},\quad i=1, 2;
			\end{split}
		\end{equation}
		\item if $|x|\geq c,$
		\begin{equation}\label{e:b-x-more-1}
			1\leq B\leq \frac{ Ct}{|x|}+1,\quad  |\partial_\theta\log B|\leq C(t+|x|).
		\end{equation}
		and
		\begin{equation}
			\label{e:phi-x-more-1}
			|\partial_\theta\Phi|\leq  C, \quad \left|\partial_\theta^i\left(\frac1{\sinh\Phi}\right)\right|
			\leq \frac{ C}{\sinh\Phi\tanh^i\Phi},
			\quad i=1, 2.
		\end{equation}
	\end{enumerate}
\end{lemma}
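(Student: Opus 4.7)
The plan is to mirror the four-step proof of Lemma~\ref{l:bphi-under-h1-h2} almost verbatim, replacing the increasing hypothesis on $\rho^{1+\delta}k_*$ with the decreasing one wherever it was invoked. The crucial consequence of decreasing monotonicity is the pointwise bound $k_*(\rho)\leq C\rho^{-1-\delta}$ for $\rho$ large, which is strictly stronger than anything available in the increasing case; it is this bound that allows the messy $k_*^2(t+c)(t+c)^2$ factor of \eqref{e:phi-x-less} to be replaced by the cleaner $\rho^{-\delta}$ factor in \eqref{e:phi-x-less-1}.

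For the near region $|x|\leq c$ (Steps 1--2), since $t\leq\rho\leq t+c$, decreasing monotonicity yields $C^{-1}k_*(t+c)\leq\kappa(x,t)\leq Ck_*(t)$ together with $\int_0^t s\kappa^2(x,s)\,ds\leq C$ via \eqref{e:k2t-integrable}. Running the Hong-type iteration \eqref{eq-hong-1.8} reproduces $C^{-1}\leq\dt B\leq C$, hence $B\sim t$; and the Gronwall argument applied to \eqref{eq-blam0}, again starting from $|\dthe k|\leq Ck_*(\rho)$, still produces $|\dthe B|\leq Ct$, so $|\partial_\theta\log B|\leq C$. For \eqref{e:phi-x-less-1}, I would start from \eqref{e:phi-1-derivative} and bound each of $|\partial_\rho\log G-\rho^{-1}|$ and $|\partial_t\log B-t^{-1}|$ by $C\rho^{-1-2\delta}$ (by inserting the new decay of $k_*$ into \eqref{eq-logg-rho-difference} and \eqref{e:bt-diff}, where now $\int_0^t s^2 k_*^2(s+c)\,ds\leq Ct^{1-2\delta}$), then combine with $|1-\tanh\Phi|\leq C/\rho$ from Lemma~\ref{l:phi} to reach $|\partial_\theta\Phi|\leq C\rho^{-\delta}\sinh\Phi$. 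The bounds on $\partial_\theta^i(1/\sinh\Phi)$ then follow from \eqref{e:1-sinhphi-derivative-2} once the iteration \eqref{e:bt2}--\eqref{e:bttheta-bound} for $\partial_\theta(\partial_t\log B)$ is redone with the new pointwise decay on $k_*$.

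For the far region $|x|\geq c$ (Steps 3--4), the argument transfers essentially verbatim, because there the only monotonicity input was $k_*(\rho)\leq Ck_*(|x|)$, and under the decreasing hypothesis this is immediate from $\rho\geq |x|/2$. All integrals in \eqref{eq-krho-inte}--\eqref{eq-kappa2-upper}, the Hong iteration for $\dt B$, and the Gronwall argument for $\dthe B$ then reproduce the bounds \eqref{e:b-x-more-1}. For \eqref{e:phi-x-more-1}, formula \eqref{e:phi-theta-x-large} together with the absence of a singularity of $\Phi$ (since $\rho\geq c/2$) gives the $L^\infty$ bounds on $\partial_\theta\Phi$ and $\partial_\theta^2\Phi$ exactly as in Step~4 of Lemma~\ref{l:bphi-under-h1-h2}, and elementary algebraic manipulation then converts these into the stated bounds on $\partial_\theta^i(1/\sinh\Phi)$.

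The main obstacle is in the near region: the dominant term $(\partial_t\log B-t^{-1})G\sinh\Phi$ in $\partial_\theta\Phi$ requires matching the $t^{-2}\int_0^t s^2 k_*^2(s+c)\,ds$ integral to the target decay $\rho^{-\delta}$. This calls for splitting the integral at some large threshold $\lambda_0$: on $[0,\lambda_0]$ one uses \eqref{e:k2t-integrable} directly, and on $[\lambda_0,t]$ one invokes $k_*^2(s)\leq Cs^{-2-2\delta}$ to extract a decay factor of order $Ct^{-1-2\delta}\leq C\rho^{-1-2\delta}$. Apart from this bookkeeping, the proof introduces no new analytic content beyond that already developed for Lemma~\ref{l:bphi-under-h1-h2}.
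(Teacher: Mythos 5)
Your proposal is correct and follows essentially the same route as the paper's proof: the paper likewise reduces the decreasing case to the pointwise decay $k_*(\rho)\leq C\rho^{-1-\delta}$, reruns the Hong iteration and the Gronwall/iteration arguments of Lemma \ref{l:bphi-under-h1-h2} for $B$, $\dthe B$, $\dt\log B-t^{-1}$, $\drho\log G-\rho^{-1}$, and $\dthe(\dt\log B)$ with this decay inserted, and handles the far region verbatim. Your sharper intermediate bounds ($C\rho^{-1-2\delta}$ where the paper records only $C\rho^{-1-\delta}$) are harmless, and the minor mis-recollection of the far-region monotonicity input (it was $k_*(\rho)\leq Ck_*(t+|x|)$ via $\rho\leq t+|x|$, replaced here by the pointwise decay $k_*(\rho)\leq C\rho^{-1-\delta}\leq C(t+|x|)^{-1-\delta}$) does not affect the argument.
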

\begin{proof} By \eqref{eq-a-bul} %the conditions on $a$
	and the decreasing property of $\rho^{1+\delta}k_*$ for $\rho\geq \lambda_0$, we can derive
	$$\frac1Ck_*(\rho)\leq k(\theta, \rho)\leq Ck_*(\rho)\leq \frac{C}{\rho^{1+\delta}}\quad  \text{ for } \rho\geq \lambda_0.$$
	Since $|x|\leq c$, $t\leq \rho\leq t+c.$ For $t\geq \lambda_0$, we have
	$$\frac{k_*(t+c)}{C}\leq \kappa(x, t)=k(\theta, \rho)\leq \frac{C}{t^{1+\delta}}.$$
	Then,  $$\int_0^ts\kappa^2(s,x)ds\leq C.$$
	By \eqref{eq-hong-1.8}, we have $\frac{t}{C}\leq B\leq Ct$ for large $t$. In addition, if we take $t$ sufficiently large, then
	\begin{align*}
		\left|\frac{\dt B}{B}-\frac{1}{t}\right|\leq\frac{t^{-2}+Ct^{-2}\int_0^ts^2\kappa^2(x, s)ds}{\int_0^t\kappa^2ds+t^{-1}-Ct^{-1}\int_0^ts\kappa^2(x, s)ds}\leq \frac{C}{t^{1+\delta}}.
	\end{align*}
	By \eqref{e:bt-over-t},  for sufficiently large $\rho,$ we can also get
	\begin{align*}
		\left|\frac{\partial_\rho G}{G}-\frac{1}{\rho}\right|\leq \frac{C}{\rho^{1+\delta}}.
	\end{align*}
	By \eqref{e-Btheta}, we obtain
	\begin{align*}
		|\dthe B|\leq C\int_0^t\int_0^s\exp\left\{\int_0^\tau \frac{Czdz}{(1+z)^{2+2\delta}}\right\}\frac{d\tau}{(1+\tau)^{2+2\delta}} ds+Ct\leq Ct.
	\end{align*}
	Thus, $|\dthe\log B|\leq C$. Then, we conclude \eqref{e:b-x-less-1}. By \eqref{e-bound-phi}, we get
	$$|\partial_\theta\Phi|\leq C(t^{-1-\delta}+\rho^{-1-\delta})\rho\sinh\Phi\leq C \rho^{-\delta}\sinh\Phi.$$
	From \eqref{e:bt2}, we have
	\begin{align*}
		|\partial_\theta(\dt\log B)|\leq \frac{C}{(1+t)^2}\left(1+\int_0^t\frac{1}{(1+s)^{2\delta}}ds\right)\leq \frac{C}{(1+t)^{1+\delta}}.
	\end{align*}
	Similar to \eqref{e:sinhphi-theta} and \eqref{e:sinhphi-theta-2},
	we can derive the other part of \eqref{e:phi-x-less-1}.	Using $k_*(\rho)\leq \frac{C}{\rho^{1+\delta}}\leq \frac{C}{t^{1+\delta}}$
	for $\rho>t\geq \lambda_0$, with the same argument for \eqref{e:b-x-more} and \eqref{e:phi-x-more}, we can verify \eqref{e:b-x-more-1} and \eqref{e:phi-x-more-1}.
\end{proof}

Moreover, we analyze asymptotic behaviors of the boundary curves.

\begin{lemma}\label{l:dthe-drho}
	Assume that all the conditions in Theorem \ref{thrm-main1} are fulfilled. Then, for $\rho\geq R$ with $R$ sufficiently large,
	\begin{equation}\label{e:dtheta-drho}
		\rho|\theta_1'(\rho)|\,\text{and } \rho|\theta_2'(\rho)| \, \text{ are uniformly bounded on $\Gamma_1$}.
	\end{equation}
\end{lemma}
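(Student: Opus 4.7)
The plan is to parametrize $\Gamma_1$ directly through the coordinate transformation $F$ and then read off $\theta_i'(\rho)$ using the inverse Jacobian formulas \eqref{rhot}--\eqref{xtheta}, exploiting the fact that $t_0(x)=R(1+x^2)^{\mu/2}$ grows strictly superlinearly in $|x|$ (since $\mu=2/(2-3\delta)>1$) to extract the required factor of $\rho^{-1}$.

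First I would identify $\Gamma_1\cap\{\rho\ge R\}$ (for $R$ large) with the $F$-image of the two unbounded arcs $\{t=t_0(x),\ x\ge b_+\}$ and $\{t=t_0(x),\ x\le b_-\}$ of $\partial\Omega_1$, giving rise to the two branches $\theta_1(\rho)$ and $\theta_2(\rho)$. On each arc I parametrize by $x$: writing $(\theta(x),\rho(x))=F(x,t_0(x))$, the chain rule yields $\theta'(x)=\theta_x+\theta_t\,t_0'(x)$ and $\rho'(x)=\rho_x+\rho_t\,t_0'(x)$. Plugging in \eqref{rhot}--\eqref{xtheta} and simplifying, after first verifying that $\rho'(x)$ does not vanish (so that $\rho$ is a legitimate parameter along the branch), produces
\begin{equation*}
\theta_i'(\rho)\;=\;\frac{\theta'(x)}{\rho'(x)}\;=\;\frac{B\sinh\Phi+\xi\,t_0'(x)}{G\bigl(t_0'(x)\sinh\Phi-\xi B\bigr)}.
\end{equation*}

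Next I would substitute the quantitative bounds. On $\{t=t_0(x)\}$ with $|x|$ large, $t_0(x)\sim R|x|^\mu$ and $t_0'(x)\sim R\mu|x|^{\mu-1}$, hence $t/|x|\to\infty$ and Lemma \ref{l:phi} gives $\sinh\Phi\ge C^{-1}t/|x|\sim R|x|^{\mu-1}$. Case (2) of Lemma \ref{l:bphi-under-h1-h2} (respectively Lemma \ref{l:bphi-under-h1-h2-1}) yields $1\le B\le Ct/|x|+1\le C'R|x|^{\mu-1}$. Therefore
\begin{equation*}
B\sinh\Phi+|t_0'(x)|\;\le\;C R^2|x|^{2\mu-2},\qquad G\sim\rho\sim R|x|^\mu,
\end{equation*}
while
\begin{equation*}
t_0'(x)\sinh\Phi\;\ge\;c\,R^2|x|^{2\mu-2}\;\gg\;B\quad\text{for $R$ large},
\end{equation*}
so the denominator has a definite sign and size $\asymp R^3|x|^{3\mu-2}$. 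Dividing the estimates for numerator and denominator gives $|\theta_i'(\rho)|\le C/(R|x|^\mu)\le C/\rho$, which is exactly \eqref{e:dtheta-drho}.

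The main obstacle will be controlling the denominator: one must check that $t_0'(x)\sinh\Phi$ genuinely dominates $B$ along the boundary, not merely that it is positive, so that the denominator stays bounded away from zero uniformly. This is precisely why the exponent $\mu>1$ is built into the definition of $t_0$ and why the threshold $\rho\ge R$ with $R$ sufficiently large is essential; once this lower bound on the denominator is secured, the remaining manipulations are direct applications of the inequalities already collected in Lemma \ref{l:G-property} and Lemmas \ref{l:bphi-under-h1-h2}--\ref{l:bphi-under-h1-h2-1}, and the two monotonicity regimes of $\rho^{1+\delta}k_*$ are treated in parallel since both yield the same asymptotics for $B$ and $\sinh\Phi$ on $|x|\ge c$.
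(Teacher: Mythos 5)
Your proposal is correct and follows essentially the same route as the paper: both parametrize the boundary arcs by $x$, use \eqref{rhot}--\eqref{xtheta} to write $\theta_i'(\rho)=\frac{1}{G}\frac{\xi t_0'(x)+B\sinh\Phi}{t_0'(x)\sinh\Phi-\xi B}$, and then combine $\sinh\Phi\geq t_0(x)/(2|x|)$, $B\leq Ct/|x|+1$ from Lemmas \ref{l:bphi-under-h1-h2}--\ref{l:bphi-under-h1-h2-1}, and $G\geq\rho$ to show the denominator dominates ($|t_0'(x)|\sinh\Phi\geq 2B$ for $R$ large, using $\mu>1$) and conclude $|\theta_i'(\rho)|\leq C/\rho$. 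The only cosmetic difference is that you phrase the size estimates asymptotically in powers of $|x|$ whereas the paper keeps explicit inequalities valid uniformly down to $|x|=|b_\pm|$.
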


\begin{proof}  A direct calculation yields
	\begin{align*}
		\frac{d\theta}{d\rho}&=\frac{\theta_tt_0'(x)+\theta_x}{\rho_tt_0'(x)+\rho_x}=\frac{\frac{\xi t_0'(x)}{G\cosh\Phi}+\frac{B}{G}\tanh\Phi}{\tanh\Phi t_0'(x)-\frac{\xi B}{\cosh\Phi}}\\
		&=\frac{1}{G}\frac{\xi{t_0'(x)} +{B}\sinh\Phi}{{t_0'(x)}\sinh\Phi -\xi {B}}.
	\end{align*}
	Note that $x<b_-$ on $\Gamma_1$ and $x>b_+$ on $\Gamma_2$. On $\Gamma_1\cup\Gamma_2$, by
	\begin{align*}
		\sinh\Phi&=\frac{1}{2}(e^{\Phi}-e^{-\Phi})\geq\frac{1}{2}\left(\frac{t}{|x|}+1-e^{-\Phi}\right)\geq\frac{t}{2|x|},\\
		t_0'(x)&=\mu Rx(1+x^2)^{\frac{\mu}{2}-1}=\frac{\mu x t_0(x)}{1+x^2},
	\end{align*}
	we have
	\begin{align*}
		|t_0'(x)|\sinh\Phi\geq
		%&\frac{\mu |x|t_0(x)}{1+x^2}\frac{t_0(x)}{2|x|}\frac{|x|}&{t_0(x)+|x|}\frac{t_0(x)+|x|}{|x|}\\
		\frac{\mu |x|t_0(x)}{1+x^2}\frac{t_0(x)}{2|x|}
		\geq\frac{\mu R|x|^{\mu+1}}{4(1+x^2)}\frac{t_0(x)}{|x|}
		\geq\frac{R}{C}\frac{t_0(x)}{|x|},
	\end{align*}
	where we used $t_0(x)\geq R|x|^\mu$ and $\mu>1.$ By Lemmas \ref{l:bphi-under-h1-h2}-\ref{l:bphi-under-h1-h2-1}, we get $B\leq \frac{Ct}{|x|}+1 \text{ on }\Gamma_1\cup\Gamma_2.$ Taking  $R$ sufficiently large, we obtain $|t_0'(x)|\sinh\Phi\geq 2B.$ By $G\geq \rho,$  we further get
	\begin{align*}
		\left|\frac{d\theta_i(\rho)}{d\rho}\right|\leq\frac{2}{\rho}\frac{|t_0'(x)|+C(\frac{t_0(x)}{|x|}+1)\sinh\Phi}{|t_0'(x)|\sinh\Phi}\leq \frac{4}{R\rho}+\frac{C}{ \rho}\left(\frac{t_0(x)}{|x|}+1\right)\frac{1+x^2}{\mu |x|t_0(x)}\leq\frac{C}{\rho},
	\end{align*}
	for $i=1, 2.$ Then, \eqref{e:dtheta-drho} follows.
\end{proof}

\section{Initial Boundary Values}\label{s-boundary}

In this section, we will study the initial boundary data on $\partial\tilde\Omega_2$ in the geodesic polar coordinates, which are transformed from the solutions on $\partial\Omega_1=\partial\Omega_2$ in the geodesic coordinates.

Set
\[\begin{cases}
	(\bar p_1(x), \bar q_1(x))=(p(x, t_0(x)), q(x,t_0(x))),\text{ when }x\leq b_-<0,\\
	(\bar p_0(x), \bar q_0(x))=(p(x, t), q(x, t)),\text{ when }b_-<x<b_+,\,\rho(x, t)=R_1,\\
	(\bar p_2(x), \bar q_2(x))=(p(x, t_0(x)), q(x,t_0(x))),\text{ when }x> b_+>0,
\end{cases}
\]
which are the boundary data on $\partial\Omega_1.$ It is easy to see that  on $\partial\Omega_2=\partial \Omega_1$, $G\geq\rho\geq R$ and then $\textrm{J}_F>0$ due to \eqref{F}. Thus, we can make the coordinate transformation $F$. Denote $\tilde\Omega_2=F(\Omega_2)$.
\begin{figure}[H]
	\centering
	% Requires \usepackage{graphicx}
	\includegraphics[width=\textwidth]{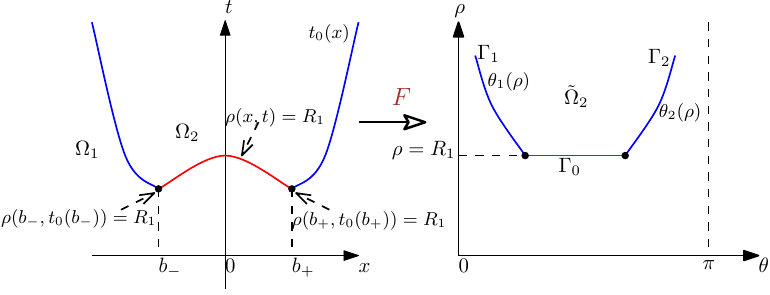}
	\caption{Regions in coordinate transformation}\label{f-region}
\end{figure}
\noindent
As shown in Figure \ref{f-region}, the boundary curve in the geodesic polar coordinate is $\partial\tilde\Omega_2=\Gamma_1\cup\Gamma_0\cup\Gamma_2$, with
\begin{align*}
	\Gamma_1=&\{(\theta_1(\rho), \rho): \rho>R_1\},\\
	\Gamma_0=&\{(\theta, R_1): \theta_1(R_1)\leq \theta\leq\theta_2(R_1)\},\\
	\Gamma_2=&\{(\theta_2(\rho), \rho): \rho>R_1\}.
\end{align*}
Here $\Gamma_1$ is the image of the curve $ \{(x, t_0(x)): x< b_-\}$ and  $\Gamma_2$ is the image of the curve $ \{(x, t_0(x)): x> b_+\}$.

To extend the solution from $\Omega_1$ to $\Omega_2$,  we shall also make variable transformation $\tilde{F}_*: (p,\, q)\rightarrow (\tilde{p},\, \tilde{q})$ near the boundary $\partial\Omega_2$ as given in \eqref{e:zw-diff}:
\begin{align}
	\frac{\tilde{p}}{G}=\tilde{F}_*(\frac{p}{B})=\frac{\theta_t+\frac{p}{B}\theta_x}{\rho_t+\frac{p}{B}\rho_x}, \quad \frac{\tilde{q}}{G}=\tilde{F}_*(\frac{q}{B})=\frac{\theta_t+\frac{q}{B}\theta_x}{\rho_t+\frac{q}{B}\rho_x}. \label{e:tilde-pq}\end{align}
Then,
\begin{align}	\tilde{q}-\tilde{p}=\frac{q-p}{(\rho_t+\frac{p}{B}\rho_x)(\rho_t+\frac{q}{B}\rho_x)}\,.\label{e:tilde-diff}
\end{align}
The boundary data after transformation $\tilde F_*$ will be given by
\begin{equation}\label{eq-boundary-data}
\begin{cases}
(G\tilde F_*(\frac{\bar p_1}B), G\tilde F_*(\frac{\bar q_1}B)):=(\tilde{\bar p}_1(\rho), \tilde{\bar q}_1(\rho)),\\
(G\tilde F_*(\frac{\bar p_0}B), G\tilde F_*(\frac{\bar q_0}B)):=(\tilde{\bar p}_0(\theta), \tilde{\bar q}_0(\theta)),\\
(G\tilde F_*(\frac{\bar p_2}B), G\tilde F_*(\frac{\bar q_2}B)):=(\tilde{\bar p}_2(\rho), \tilde{\bar q}_2(\rho)).
	\end{cases}
\end{equation}

Moreover, $\partial\tilde\Omega_2=\Gamma_1\cup\Gamma_0\cup\Gamma_2$  is space-like in $\rho$ for the above data.

\begin{lemma}\label{l-space}
	Assume that all the conditions in Theorem \ref{thrm-main1} are fulfilled. Let $(p,\, q)$ be the solution obtained in Lemma \ref{l:omega-1}. Then,
	\begin{equation}\label{eq-space-like}
		\begin{split}
			&\theta_1'(\rho)-\frac{\tilde {\bar p}_1(\rho)}{G(\theta_1(\rho),\rho)}, \quad
			\theta_1'(\rho)-\frac{\tilde {\bar q}_1(\rho)}{G(\theta_1(\rho),\rho)}\leq-\frac BG\frac{C|x|}{t_0(x)},\\
			&\theta_2'(\rho)-\frac{\tilde{\bar p}_2(\rho)}{G(\theta_2(\rho),\rho)},\quad
			\theta_2'(\rho)-\frac{\tilde{\bar q}_2(\rho)}{G(\theta_2(\rho),\rho)}\geq \frac BG\frac{C|x|}{t_0(x)}.
		\end{split}
	\end{equation}
\end{lemma}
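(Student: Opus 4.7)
The plan is to compute both sides of the inequalities in \eqref{eq-space-like} in closed form using the coordinate-transformation identities from Section \ref{s-transform}, and then combine the bounds on $(\bar p_i,\bar q_i)$ from Lemma \ref{l:omega-1}(iii) with the estimates on $B$ and $\sinh\Phi$ from Lemmas \ref{l:bphi-under-h1-h2}--\ref{l:bphi-under-h1-h2-1} to extract the sign and the lower bound.

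\smallskip

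The first step is to repeat the computation at the start of the proof of Lemma \ref{l:dthe-drho}, expressing
\[
\theta_i'(\rho)=\frac{\theta_x+\theta_t\, t_0'(x)}{\rho_x+\rho_t\, t_0'(x)}=\frac{1}{G}\cdot\frac{\xi\,t_0'(x)+B\sinh\Phi}{t_0'(x)\sinh\Phi-\xi B},
\]
with $\xi=1$ on $\Gamma_1$ (where $x<b_-<0$) and $\xi=-1$ on $\Gamma_2$ (where $x>b_+>0$), both from \eqref{rhot}--\eqref{thetat}. Plugging the same formulas into the definition \eqref{e:tilde-pq} of $\tilde F_*$ and multiplying through by $G\cosh\Phi$ gives the parallel expression
\[
\frac{\tilde{\bar p}_i}{G}=\frac{\xi+\bar p_i\sinh\Phi}{G(\sinh\Phi-\xi\bar p_i)},\qquad \frac{\tilde{\bar q}_i}{G}=\frac{\xi+\bar q_i\sinh\Phi}{G(\sinh\Phi-\xi\bar q_i)}.
\]

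\smallskip

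The second step is the algebraic simplification. Subtracting and clearing denominators, all terms linear in $\sinh\Phi$ cancel and the numerator collapses to $(B-t_0'(x)\bar p_i)(1+\sinh^2\Phi)=(B-t_0'(x)\bar p_i)\cosh^2\Phi$, so
\[
\theta_i'(\rho)-\frac{\tilde{\bar p}_i(\rho)}{G}=\frac{(B-t_0'(x)\bar p_i)\cosh^2\Phi}{G\bigl(t_0'(x)\sinh\Phi-\xi B\bigr)\bigl(\sinh\Phi-\xi\bar p_i\bigr)},
\]
and the identical formula holds with $\bar q_i$ in place of $\bar p_i$. This single identity handles all four inequalities in \eqref{eq-space-like}; the $(p,q)$ parallelism therefore costs nothing extra.

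\smallskip

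The third step is to read off the signs and magnitudes. From Lemma \ref{l:omega-1}(iii) the boundary values $\bar p_i,\bar q_i$ are $O([(t_0(x)+R)h_2(x)]^{-4})$, which is much smaller than $1$, so $\sinh\Phi-\xi\bar p_i$ has the same sign as $\sinh\Phi>0$ and is $\approx\sinh\Phi$. Similarly $B-t_0'(x)\bar p_i=B(1+o(1))$ since $|t_0'(x)\bar p_i|\ll B$ by Lemmas \ref{l:bphi-under-h1-h2}--\ref{l:bphi-under-h1-h2-1}. The denominator factor $t_0'(x)\sinh\Phi-\xi B$ is negative on $\Gamma_1$ (both summands negative once $|t_0'(x)|\sinh\Phi\geq 2B$, which is exactly the inequality established inside the proof of Lemma \ref{l:dthe-drho} for $R$ large) and positive on $\Gamma_2$; this fixes the signs as stated. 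For the magnitude, the same bound $|t_0'(x)|\sinh\Phi\geq 2B$ gives $|t_0'(x)\sinh\Phi-\xi B|\leq\tfrac{3}{2}|t_0'(x)|\sinh\Phi$, hence
\[
\left|\theta_i'(\rho)-\frac{\tilde{\bar p}_i(\rho)}{G}\right|\geq\frac{B\cosh^2\Phi}{\tfrac{3}{2}G|t_0'(x)|\sinh^2\Phi}\geq\frac{2B}{3G|t_0'(x)|}.
\]
Since $|t_0'(x)|=\mu|x|t_0(x)/(1+x^2)$ and $|x|\geq b_\pm$ is bounded away from $0$, $(1+x^2)/|x|\leq C|x|$, which yields $|t_0'(x)|^{-1}\geq C|x|/t_0(x)$ and finishes \eqref{eq-space-like}.

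\smallskip

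The main obstacle is the bookkeeping in the second step: one has to track $\xi$ and the sign of $t_0'(x)$ separately on $\Gamma_1$ and $\Gamma_2$, and verify the remarkable cancellation that reduces the numerator to $(B-t_0'(x)\bar p_i)\cosh^2\Phi$. Once this identity is in hand, the rest is a direct application of the estimates already available, with the large-$R$ condition $|t_0'(x)|\sinh\Phi\geq 2B$ providing both the sign control in the denominator and the quantitative lower bound on the right-hand side.
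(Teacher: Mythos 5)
Your proof is correct and is essentially the paper's own argument: the closed-form identity you derive in Step 2,
\[
\theta_i'(\rho)-\frac{\tilde{\bar p}_i}{G}=\frac{(B-t_0'(x)\bar p_i)\cosh^2\Phi}{G\,(t_0'(x)\sinh\Phi-\xi B)(\sinh\Phi-\xi\bar p_i)},
\]
is exactly the paper's identity $\frac{B}{G}(1-\frac{\bar p_i}{B}t_0')\big/\big[(\rho_tt_0'+\rho_x)(\rho_t+\frac{\bar p_i}{B}\rho_x)\big]$ rewritten via $\rho_t=\tanh\Phi$, $\rho_x=-\xi B/\cosh\Phi$, and the sign and magnitude analysis uses the same ingredients (Lemma \ref{l:omega-1}(iii), the bound $|t_0'|\sinh\Phi\geq 2B$ for large $R$, and the near-unit size of $\sinh\Phi-\xi\bar p_i$, which is the content of Lemma \ref{l:transform-1}). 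One tiny slip in the last step: to conclude $|t_0'(x)|^{-1}\geq C|x|/t_0(x)$ you need the lower bound $(1+x^2)/|x|\geq |x|$ (immediate from $1+x^2\geq x^2$), not the upper bound $(1+x^2)/|x|\leq C|x|$ that you quote; the conclusion is unaffected.
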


\begin{proof}
	Viewing $x$ as an independent variable, we have
	\begin{align*}
		\theta_1'(\rho)-\frac{\tilde {\bar p}_1(\rho)}{G(\theta_1(\rho),\rho)}&=\frac{\theta_tt_0'(x)+\theta_x}{\rho_tt_0'(x)+\rho_x}-\frac{\theta_t+\frac{\bar p_1}{B}\theta_x}{\rho_t+\frac{\bar p_1}{B}\rho_x}\\
		&=\frac{(\theta_x\rho_t-\rho_x\theta_t)(1-\frac{\bar p_1}{B}t_0'(x))}{(\rho_tt_0'(x)+\rho_x)(\rho_t+\frac pB\rho_x)}\\
		&=\frac{\frac{B}{G}(1-\frac{\bar  p_1}{B}t_0'(x))}{(\rho_tt_0'(x)+\rho_x)(\rho_t+\frac pB\rho_x)}.
	\end{align*}
By \eqref{e:p-q-x-derivatives}, we get
\begin{equation}\label{eq-space-1}
\frac {|\bar p_1|}{B}|t_0'(x)|\leq\frac{C|t_0'(x)|}{((t_0(x)+R)h_2(x))^4}\leq\frac14.	\end{equation}
On the other hand, we have
\begin{align*}
	\rho_tt_0'(x)+\rho_x&=t_0'(x)\tanh\Phi -\frac{\xi B}{\cosh\Phi}\\
	&=-\frac{|x|t_0(x)}{1+x^2}-(1-\tanh\Phi)\frac{|x|t_0(x)}{1+x^2}-\frac{\xi B}{\cosh\Phi}.
\end{align*}
By Lemma \ref{l:phi}, Lemma \ref{l:bphi-under-h1-h2}, and Lemma \ref{l:bphi-under-h1-h2-1}, we get
\[B\leq \frac {Ct}{|x|}, \quad \cosh\Phi\geq \frac{1}{2}\frac{t}{|x|},\]
and hence, by taking $R$ sufficiently large,
\begin{equation}\label{eq-space-2}
	\rho_tt_0'(x)+\rho_x\geq-\frac{|x|t_0(x)}{2(1+x^2)}-C\geq-\frac{t_0(x)}{C|x|}.
	\end{equation}
Combining \eqref{eq-space-1}, \eqref{eq-space-2}, and Lemma \ref{l:transform-1}, we obtain
\begin{align*}
		\theta_1'(\rho)-\frac{\tilde {\bar p}(\rho)}{G(\theta_1(\rho),\rho)}\leq-\frac BG\frac{C|x|}{t_0(x)}.
\end{align*}
We can prove other inequalities similarly.
\end{proof}

Note that, by  \eqref{e:tilde-pq} and \eqref{eq-boundary-data}, we have
\begin{equation}\label{eq-boundary-data-1}
\begin{cases}
	(\tilde{\bar p}_1(\rho), \tilde{\bar q}_1(\rho))=(\tilde p(\theta_1(\rho), \rho), \tilde q(\theta_1(\rho), \rho)),\\
	(\tilde{\bar p}_0(\theta), \tilde{\bar q}_0(\theta))=(\tilde p(\theta, R_1), \tilde q(\theta, R_1)),\\
	(\tilde{\bar p}_1(\rho), \tilde{\bar q}_1(\rho))=(\tilde p(\theta_1(\rho), \rho), \tilde q(\theta_1(\rho), \rho)).
\end{cases}
\end{equation}
Moreover, by \eqref{e:tilde-pq}, we also have
\begin{equation}\label{e:tilde-pq-g}
	\begin{split}
		\tilde{p} &=G\tilde{F}_*\left(\frac{p}{B}\right)=\frac{(\sinh\Phi)^{-1}\xi+p}{1-(\sinh\Phi)^{-1}\xi p},\\
		\tilde{q} &=G\tilde{F}_*\left(\frac{q}{B}\right)=\frac{(\sinh\Phi)^{-1}\xi+q}{1-(\sinh\Phi)^{-1}\xi q}.
	\end{split}
\end{equation}
Then,
\begin{equation}\label{e:tilde-pq-kg}
	\begin{split}
		\frac{\tilde{p}}{kG}&=\frac{1}{kG}\frac{(\sinh\Phi)^{-1}\xi+p}{1-(\sinh\Phi)^{-1}\xi p},\\
		\frac{\tilde{q}}{kG}&=\frac{1}{kG}\frac{(\sinh\Phi)^{-1}\xi+q}{1-(\sinh\Phi)^{-1}\xi q}.
	\end{split}
\end{equation}
By
\eqref{e:p-q-x-derivatives}, we have $|p|\leq \frac{1}{R}$ on $\partial\Omega_2$.  By \eqref{e:sinh-cosh-tanh-phi}, we have $\sinh\Phi\geq \frac12R$  on $\Gamma_1\cup\Gamma_2$, and then
$$1-\frac{|\xi p|}{\sinh\Phi}\geq1-\frac{1}{2R}>\frac{1}{2}.$$

We are ready to analyze the initial boundary data in $(\theta, \rho)$.
We first consider the case that $\overline K$ is increasing.

\begin{lemma}\label{l:boundary-bounds}
Assume that all the conditions in Theorem \ref{thrm-main1} are fulfilled. Let $(p,\, q)$ be the solution obtained in Lemma \ref{l:omega-1}. Assume that $\overline{K}$ is increasing. Then, $\tilde p<\tilde q$ on $\partial\tilde\Omega_2$, and
	\begin{align}
		&|\dthe^i (\tfrac{\tilde{p}}{kG})|, |\dthe^i (\tfrac{\tilde{q}}{kG})|\leq CR^{-\delta/2}\rho^{-\delta/2},  \quad i=0, 1, 2, \text{ on } \Gamma_1\cup\Gamma_2, \label{e:uv-gamma1-increase}\\
		&|\theta_j'(\rho)||\dthe^i (\tfrac{\tilde{p}}{kG})|^2, |\theta_j'(\rho)| |\dthe^i (\tfrac{\tilde{q}}{kG})|^2\leq CR^{-\delta}\rho^{-1-\delta},  \quad i=0, 1, 2; j=1, 2, \text{ on } \Gamma_1\cup\Gamma_2, \label{e:uv-gamma1-increase-1}\\
		&|\dthe^i (\tfrac{\tilde{p}}{kG})|, |\dthe^i (\tfrac{\tilde{q}}{kG})|\leq Ck_*(2R_1)R,  \quad i=0, 1, 2, \text{ on } \Gamma_0. \label{e:uv-gamma2-increase}
	\end{align}
\end{lemma}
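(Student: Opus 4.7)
The plan is to reduce every bound on $\partial\tilde\Omega_2$ to the already-established bounds on $\partial\Omega_1=\partial\Omega_2$, by starting from the closed form \eqref{e:tilde-pq-kg} for $\tilde p/(kG)$ and $\tilde q/(kG)$ and then transferring $\theta$-derivatives to $(x,t)$-derivatives via the chain rule $\partial_\theta=x_\theta\partial_x+t_\theta\partial_t$ with $x_\theta,t_\theta$ from \eqref{xtheta}. The strict inequality $\tilde p<\tilde q$ is essentially immediate: identity \eqref{e:tilde-diff} combined with $q-p>0$ from Lemma \ref{l:omega-1}(i) and the lower bounds $\rho_t+\tfrac{p}{B}\rho_x,\,\rho_t+\tfrac{q}{B}\rho_x\ge 1/2$ from Lemma \ref{l:transform-1} yields $\tilde q-\tilde p\ge 4(q-p)>0$.

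For \eqref{e:uv-gamma1-increase} with $i=0$ on $\Gamma_1\cup\Gamma_2$, note first that $|\xi p/\sinh\Phi|\le (2R)^{-1}<1/2$, so the denominator in \eqref{e:tilde-pq-kg} lies in $[1/2,3/2]$ and it suffices to estimate $(1/\sinh\Phi+|p|)/(kG)$. The contribution $|p|/(kG)$ is of higher order, since $|p|\lesssim[(t_0(x)+R)h_2]^{-4}$ by Lemma \ref{l:omega-1}(iii) and $kG\gtrsim k_*(\rho)\rho$ by Lemma \ref{l:G-property}. For the main term $1/(kG\sinh\Phi)$ I would plug in $k\asymp\sqrt{\overline K(\rho)}\,\rho^{-1-\delta}$, $G\asymp\rho$, and $\sinh\Phi\gtrsim t_0(x)/|x|$ from Lemma \ref{l:phi}, and then use the curve equation $t_0(x)=R(1+x^2)^{\mu/2}$ with the sharp identity $(\mu-1)/\mu=3\delta/2$ coming from $\mu=2/(2-3\delta)$ in \eqref{omega}, together with $\rho\asymp t_0(x)$, to obtain $R|x|^{\mu-1}\asymp R^{1-3\delta/2}\rho^{3\delta/2}$. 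Substituting and invoking the monotonicity $\overline K(\rho)\ge\overline K(R)$ of \eqref{eq-K-bar-bound} in the increasing case then converts this into $CR^{-\delta/2}\rho^{-\delta/2}$ once $R$ is large. The case $|x|\le c$ is simpler and uses $\sinh\Phi\gtrsim\rho$ together with the estimate $Ck_*^2(t+c)(t+c)^2$ of Lemma \ref{l:bphi-under-h1-h2}.

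The higher-order cases $i=1,2$ of \eqref{e:uv-gamma1-increase} are handled by applying $\partial_\theta$ to \eqref{e:tilde-pq-kg} and controlling each factor separately: $\partial_\theta^i\Phi$ and $\partial_\theta^i(1/\sinh\Phi)$ are supplied by Lemma \ref{l:bphi-under-h1-h2}; $\partial_\theta^i\log G$ and $\partial_\theta^i\log k$ by Lemma \ref{l:G-property} and \eqref{eq-a-bul}; $\partial_x^ip,\,\partial_x^iq$ by Lemma \ref{l:omega-1}(iii); and $\partial_t p,\,\partial_t q$ are read off from \eqref{e:p}-\eqref{e:q} with coefficients dominated by $h_1$. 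Every extra $\partial_\theta$ introduces at most a polynomial factor in $h_2(x)$ and $t_0(x)+R$, which is absorbed by the fourth-power denominator $[(t_0(x)+R)h_2(x)]^4$ already carried by $p,q$ and their $x$-derivatives, so the same decay $R^{-\delta/2}\rho^{-\delta/2}$ propagates to $i=1,2$. Then \eqref{e:uv-gamma1-increase-1} follows at once from \eqref{e:uv-gamma1-increase} and Lemma \ref{l:dthe-drho}, since $|\theta'_j(\rho)|\,(R^{-\delta/2}\rho^{-\delta/2})^2\le CR^{-\delta}\rho^{-1-\delta}$. Finally, on the bounded arc $\Gamma_0$ one has $x\in[b_-,b_+]$ and $\rho=R_1\sim R$, so $B,G,k,\sinh\Phi$ are uniformly controlled, and differentiating \eqref{e:tilde-pq-kg} up to twice yields the (loose) upper bound $Ck_*(2R_1)R$ of \eqref{e:uv-gamma2-increase} directly.

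The main obstacle, as the preceding sketch indicates, is the exponent bookkeeping in the second paragraph: matching $R^{-\delta/2}\rho^{-\delta/2}$ requires the sharp relation $(\mu-1)/\mu=3\delta/2$ forced by $\mu=2/(2-3\delta)$, together with the monotonicity of $\overline K$ in the increasing case, and it is precisely this step that dictates the choice of $\mu$ in the definition \eqref{omega} of $\Omega_1$.
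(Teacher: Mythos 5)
Your proposal follows essentially the same route as the paper's proof: the closed form \eqref{e:tilde-pq-kg}, the lower bound $\sinh\Phi\gtrsim R^{1/\mu}\rho^{1-1/\mu}$ from Lemma \ref{l:phi} combined with the choice $\mu=2/(2-3\delta)$, the monotonicity of $\overline K$ to secure $k_*\rho^{1+\delta}\gtrsim 1$, Lemmas \ref{l:bphi-under-h1-h2} and \ref{l:omega-1}(iii) for the derivative bookkeeping under $\partial_\theta=x_\theta\partial_x+t_\theta\partial_t$, and Lemma \ref{l:dthe-drho} for \eqref{e:uv-gamma1-increase-1}. The only quibble is the direction of your numerical inequality for $\tilde q-\tilde p$: the denominator in \eqref{e:tilde-diff} being bounded below by $1/4$ gives $\tilde q-\tilde p\le 4(q-p)$, not $\ge$, but the positivity you actually need still follows.
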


\begin{proof} 
	The proof below consists of several steps. First, by \eqref{e:tilde-diff} and Lemma \ref{l:omega-1}(i), we have $\tilde q>\tilde p$.

	{\it Step 1. We establish estimates on $\Gamma_1\cup\Gamma_2$.} We first estimate $\frac{p}{kG}$.
	Note that
	$$\rho(x, t_0(x))\leq|x|+t_0(x)\leq 2t_0(x)\quad\text{on }\Gamma_1\cup\Gamma_2.$$
	Using
	$$ |x|=\sqrt{\left(\frac{t_0(x)}{R}\right)^{\frac2\mu}-1}\leq\left(\frac{t_0(x)}{R}\right)^{\frac1\mu},
	$$
	and Lemma \ref{l:phi}, we have
	\begin{equation}\label{e:sinh-phi}
		\sinh\Phi\geq\frac{t_0(x)}{2|x|}\geq\frac12R^{\frac1\mu}(t_0(x))^{1-\frac1\mu}
		\geq\frac{R^{\frac1\mu }}{C}\rho^{1-\frac1\mu}.
	\end{equation}
	Note that $k_*(\rho)\rho^{1+\delta}\geq1$ for some $\delta\in(0, 1/2)$ and $\frac1\mu=1-\frac32\delta>\frac12\delta.$ By \eqref{e:sinh-phi}, we have
	\begin{equation}\label{eq-kgsinh}
		\frac{1}{kG\sinh\Phi}\leq\frac{C}{R^{\frac1\mu }k_*\rho^{2-\frac1\mu }}\leq\frac{C}{R^{\frac1\mu}\rho^{-1-\delta+2-\frac1\mu}}=
		\frac C{R^{\frac1\mu}\rho^{\frac\delta2}}\leq CR^{-\delta/2}\rho^{-\delta/2}.
	\end{equation}
	%where we used $\mu=\frac{2}{2-3\delta}$.
	Thus, by the  first equality in $\eqref{e:tilde-pq-kg}$, \eqref{e:p-q-x-derivatives}, and $t_0\leq \rho(t_0, x)\leq 2t_0,$ we get
	\begin{equation}
		\label{e:tilde-p-kg-c0}
		\frac{|\tilde{p}|}{kG}\leq \frac{C}{kG\sinh\Phi}+\frac{C|p|}{kG}\leq
		\frac{C}{kG\sinh\Phi}+\frac{C}{Rt_0^4k_*}\leq CR^{-\delta/2}\rho^{-\delta/2}.
	\end{equation}
	
	For the estimates of derivatives, a straightforward differentiation yields
	\begin{equation}\label{e:tilde-pq-kg-derivative}
		\begin{split}
			&\partial_\theta\left(\frac{\tilde{p}}{kG}\right)=\frac{-\dthe\log (kG)}{kG}\tilde{p}+\frac{\dthe\tilde{p}}{kG},\\
			&\partial_\theta^2\left(\frac{\tilde{p}}{kG}\right)=\frac{-\dthe^2\log (kG)}{kG}\tilde{p}+\frac{2\dthe\log (kG)\dthe\tilde{p}}{kG}+\frac{(\dthe\log (kG))^2}{kG}\tilde{p}+\frac{\dthe^2\tilde{p}}{kG}.
		\end{split}
	\end{equation}
	We need to calculate $\dthe\tilde{p}$ and $\dthe^2\tilde{p}$. By the first equality in $\eqref{e:tilde-pq-g}$ we get
	\begin{equation}\label{e:tilde-p-derivative-1}
		\begin{split}
			\dthe\tilde{p}&=\left(\dthe p+\xi\dthe\left(\frac{1}{\sinh\Phi}\right)\right)\left(1-\frac{\xi p}{\sinh\Phi}\right)^{-1}\\
			&\quad+\left(p+\frac{\xi}{\sinh\Phi}\right)\left(1-\frac{\xi p}{\sinh\Phi}\right)^{-2}\left(\frac{\xi\dthe p}{\sinh\Phi}+\xi p\dthe\left(\frac{1}{\sinh\Phi}\right)\right),
		\end{split}
	\end{equation}
	and
	\begin{equation}\label{e:tilde-p-derivative-2}
		\begin{split}
			\dthe^2\tilde{p}&=\left(\dthe^2 p+\xi\dthe^2\left(\frac{1}{\sinh\Phi}\right)\right)\left(1-\frac{\xi p}{\sinh\Phi}\right)^{-1}\\
			&\qquad+2\left(\dthe p+\xi\dthe\left(\frac{1}{\sinh\Phi}\right)\right)\left(1-\frac{\xi p}{\sinh\Phi}\right)^{-2}\left(\frac{\xi\dthe p}{\sinh\Phi}+\xi p\dthe\left(\frac{1}{\sinh\Phi}\right)\right)\\
			&\qquad+2\left(p+\frac{\xi}{\sinh\Phi}\right)\left(1-\frac{\xi p}{\sinh\Phi}\right)^{-3}\left(\frac{\xi\dthe p}{\sinh\Phi}+\xi p\dthe\left(\frac{1}{\sinh\Phi}\right)\right)^2\\
			&\qquad+\left(p+\frac{\xi}{\sinh\Phi}\right)\left(1-\frac{\xi p}{\sinh\Phi}\right)^{-2}\left(\frac{\xi\dthe^2 p}{\sinh\Phi}+\xi p\dthe^2\left(\frac{1}{\sinh\Phi}\right)\right)\\
			&\qquad+\left(p+\frac{\xi}{\sinh\Phi}\right)\left(1-\frac{\xi p}{\sinh\Phi}\right)^{-2}\left(2\xi\dthe p\dthe\left(\frac{1}{\sinh\Phi}\right)\right).
		\end{split}
	\end{equation}
	We estimate $\partial_\theta p$ and $\dthe^2 p$ first. In view of Lemma \ref{l:G-property} and  Lemma \ref{l:bphi-under-h1-h2}, we get, on $\partial\tilde{\Omega}_2$,
	\begin{equation}\label{e:p-theta-bound}
		\begin{split}
			|\partial_\theta p|&=|\dx p x_\theta+\dt p t_\theta|
			\leq\frac{C}{(t_0^4h_2^4)(x)}(|x_\theta|+h_1(x)|t_\theta|)\\
			&\leq \frac{C}{(t_0^4h_2^4)(x)}\left(\frac{G}{B}+\frac{Gh_1(x)}{\cosh\Phi}\right)
			\leq\frac{C(\rho+\rho h_1(x))}{(t_0^4h_2^4)(x)}
			\leq\frac{C}{\rho^2},
		\end{split}
	\end{equation}
	where we used $G\leq C\rho$, $B\geq1$, \eqref{e:p-q-x-derivatives} and
	\begin{equation*}%\label{eq-usecondition}
		t_0(x)\leq\rho(x, t_0(x))\leq 2t_0(x),\quad \cosh\Phi\geq\frac{t_0(x)+|x|}{2|x|}\geq R,\quad h_2(x)\geq h_1(x)+|x|^2.
	\end{equation*}
	Recall that
	$$x_\theta=\frac{G}{B}\tanh\Phi,\quad t_\theta=\frac{\xi G}{\cosh\Phi}.$$
	Then,
	\begin{align*}
		&x_{\theta\theta}=\frac{G}{B}\tanh\Phi(\dthe\log G-\dthe\log B)+\frac{G\dthe\Phi}{B\cosh^2\Phi},\\
		&t_{\theta\theta}=\frac{\xi\dthe G}{\cosh\Phi}-\frac{\xi G\dthe\Phi\sinh\Phi}{\cosh^2\Phi}.
	\end{align*}
	Hence,
	\begin{equation}\label{e:p-theta-bound-1}
		\begin{split}
			|\partial_\theta^2 p|&=|\dx^2 p x_\theta^2+\dt^2 p t_\theta^2+\dx p x_{\theta\theta}+\dt pt_{\theta\theta}|\\
			&\leq\frac{C}{(t_0^4h_2^4)(x)}(|x_\theta|^2+|x_{\theta\theta}|+h_1(x)(|t_\theta|^2+|t_{\theta\theta}|))\\
			&\leq\frac{C}{(t_0^4h_2^4)(x)}\left(\frac{G^2}{B^2}+\frac{G}{B}(|\partial_\theta\log G|
			+|\partial_\theta\log B|)+\frac{G|\partial_\theta\Phi|}{B\cosh^2\Phi} \right)\\
			&\qquad +\frac{Ch_1(x)}{(t_0^4h_2^4)(x)}\left(\frac{G^2}{\cosh^2\Phi}
			+\frac{G|\partial_\theta\Phi|\sinh\Phi}{\cosh^2\Phi}+\frac{|\partial_\theta G|}{\cosh\Phi}\right)\\
			&\leq \frac{C}{(t_0^4h_2^4)(x)}\left(\rho^2+(t+|x|)\rho
			+\frac{\rho}{R}+\frac{\rho^2h_1(x)|x|}{(t_0(x)+|x|)^2} \right)
			\leq\frac{C}{\rho^2}.
		\end{split}
	\end{equation}
	By $x<b_-<0$ on $\Gamma_1$ and $x>b_+>0$ on  $\Gamma_2$, \eqref{e:b-x-more} and \eqref{e:phi-x-more} hold on  $\Gamma_1\cup\Gamma_2$.
	By \eqref{e:tilde-p-derivative-1}-\eqref{e:p-theta-bound-1}, %\eqref{e:tilde-p-derivative-2}, \eqref{e:p-theta-bound} and \eqref{e:p-theta-bound-1},
	we can derive
	\begin{align*}
		|\dthe\tilde{p}|&\leq C|\dthe p|+C\left|\dthe\left(\frac{1}{\sinh\Phi}\right)\right|\leq \frac{C}{\rho^2}+\frac{C}{\sinh\Phi},\\
		|\dthe^2\tilde{p}|&\leq  C\left(|\dthe p|+C\left|\dthe\left(\frac{1}{\sinh\Phi}\right)\right|\right)^2+C|\dthe^2 p|+C\left|\dthe^2\left(\frac{1}{\sinh\Phi}\right)\right|\\
		&\leq\frac{C}{\rho^2}+\frac{C}{\sinh\Phi}+\frac{C}{\rho^2\sinh\Phi}.
	\end{align*}
	With Lemma \ref{l:G-property}, \eqref{e:tilde-pq-kg-derivative}, \eqref{eq-kgsinh}, and \eqref{e:sinh-phi}, we further get
	\begin{equation}\label{e:tilde-p-kg-c12}
		\left|\partial_\theta\left(\frac{\tilde{p}}{kG}\right)\right|, \left|\partial_\theta^2\left(\frac{\tilde{p}}{kG}\right)\right|\leq\frac{C}{k_*(\rho)\rho\sinh\Phi}+\frac{C}{k_*(\rho)\rho^3}\leq CR^{-\delta/2} \rho^{-\delta/2}.
	\end{equation}
	%Analogously,
	Similarly, we also have
	\begin{equation}\label{e:tilde-q-kg-c012}
		\left|\partial_\theta^i\left(\frac{\tilde{q}}{kG}\right)\right|\leq CR^{-\delta/2}\rho^{-\delta/2},\, i=0, 1, 2.
	\end{equation}
	By \eqref{e:tilde-p-kg-c0}, \eqref{e:tilde-p-kg-c12}, and \eqref{e:tilde-q-kg-c012}, we obtain \eqref{e:uv-gamma1-increase}. By Lemma \ref{l:dthe-drho} and \eqref{e:uv-gamma1-decrease}, we get \eqref{e:uv-gamma1-increase-1}.
	\smallskip
	
	{\it Step 2. We establish estimates on $\Gamma_0$.}
	%{\sf Bounds on $\Gamma_0$:}
	On $\Gamma_0,$ $\rho=R_1$ and then $\sinh\Phi\geq R$. Similar to \eqref{e:tilde-p-kg-c0}, we have by \eqref{e:p-q-x-derivatives}  that
	\begin{equation}\label{e:tilde-p-kg-c0-1}
		\left|\frac{\tilde{p}}{kG}\right|\leq \frac{C}{k_*(R_1)R_1^2}+\frac{C}{k_*(R_1)R_1^5}\leq Ck_*(2R_1)R,
	\end{equation}
	since $\rho^{1+\delta}k_*(\rho)$ is increasing. Using (1) in Lemma \ref{l:bphi-under-h1-h2} (since $|x|\leq b=\max\{|b_-|, b_+\}$ on $\Gamma_0$), we have, on $\Gamma_0$,
	\begin{align*}
		&|\dthe\Phi|\leq Ck_*^2(R_1+b)(R_1+b)^2\sinh\Phi,\\
		&\left|\partial_\theta^i\left(\frac{1}{\sinh\Phi}\right)\right|\leq Ck_*^2(R_1+b)(R_1+b)^2.
	\end{align*}
	In the same way as estimating $\dthe p$ and $\dthe^2 p$ on $\Gamma_1\cup\Gamma_2$, we can derive
	\begin{align*}
		&|\dthe p|\leq \frac{CR}{R_1^4h_2^4(x)}(1+h_1(x))\leq \frac{C}{R^2},\\
		&|\dthe^2 p|\leq \frac{CR^2}{R_1^4h_2^4(x)}(1+k_*^2(R_1+b)(R_1+b)^2+h_1(x)k_*^2(R_1+b)(R_1+b)^2)\leq \frac{C}{R^2}.
	\end{align*}
	Then, by \eqref{e:tilde-p-derivative-1}-\eqref{e:tilde-p-derivative-2}, we get
	\begin{align*}
		|\dthe \tilde p|, |\dthe^2 \tilde p| \leq \frac{C}{R^2}+ Ck_*^2(R_1+b)(R_1+b)^2.
	\end{align*}
	Thus, by \eqref{e:tilde-pq-kg-derivative}, we have
	\begin{equation}
		\label{e:tilde-p-kg-c12-1}
		\left|\partial_\theta\left(\frac{\tilde{p}}{kG}\right)\right|, \left|\partial_\theta^2\left(\frac{\tilde{p}}{kG}\right)\right|\leq \frac{C}{k_*(R_1)R_1R^2}+\frac{Ck_*^2(R_1+b)(R_1+b)^2}{k_*(R_1)R_1}\leq Ck_*(2R_1)R.
	\end{equation}
	Similarly, we have, on $\Gamma_0$,
	\begin{equation}\label{e:tilde-q-kg-c012-1}
		\left|\partial_\theta^i\left(\frac{\tilde{q}}{kG}\right)\right|\leq Ck_*(2R_1)R, \quad i=0, 1, 2.
	\end{equation}
	By \eqref{e:tilde-p-kg-c0-1}, \eqref{e:tilde-p-kg-c12-1}, and \eqref{e:tilde-q-kg-c012-1}, we obtain \eqref{e:uv-gamma2-increase}.
\end{proof}

We next consider the case that $\overline K$ is decreasing.

\begin{lemma}\label{l:boundary-bounds-1}
	Assume that all the conditions in Theorem \ref{thrm-main1} are fulfilled. Let $(p,\, q)$ be the solution obtained in Lemma \ref{l:omega-1}.  Assume that $\overline{K}$ is decreasing. Then, $\tilde p<\tilde q$ on $\partial\tilde\Omega_2$,
	\begin{align}
		&|\dthe^i \tilde p|, |\dthe^i \tilde q| \leq CR^{-\delta/2}\rho^{-\delta},  \quad i=0, 1, 2, \text{ on } \Gamma_1\cup\Gamma_2, \label{e:uv-gamma1-decrease}\\
		&|\theta'_j(\rho)|\dthe^i \tilde p|^2, |\theta'_j(\rho)||\dthe^i \tilde q|^2\leq CR^{-\delta
		}\rho^{-1-2\delta},  \quad i=0, 1, 2; j=1, 2, \text{ on } \Gamma_1\cup\Gamma_2, \label{e:uv-gamma1-decrease-1} \\
		&|\dthe^i \tilde p|, |\dthe^i \tilde q|\leq CR^{-\delta},  \quad i=0, 1, 2, \text{ on } \Gamma_0. \label{e:uv-gamma2-decrease}
	\end{align}
\end{lemma}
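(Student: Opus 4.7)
The proof will parallel that of Lemma \ref{l:boundary-bounds} almost line-by-line, with the following two changes: the citations of Lemma \ref{l:bphi-under-h1-h2} are replaced by Lemma \ref{l:bphi-under-h1-h2-1}, and the decreasing hypothesis on $\rho^{1+\delta}k_*$ is used in the opposite direction, namely $k_*(\rho) \le C\rho^{-1-\delta}$ for $\rho$ large. The reason the target estimates are stated for $(\tilde p, \tilde q)$ rather than for $(\tilde p/(kG), \tilde q/(kG))$ is that in this decreasing regime the ODE analysis in Section \ref{s:formula} selects the parameters $(\alpha,\beta) = (1,0)$, so $v \sim G(z-w)$ rather than $(z-w)/k$.

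The inequality $\tilde p < \tilde q$ is immediate from \eqref{e:tilde-diff}, Lemma \ref{l:omega-1}(i), and Lemma \ref{l:transform-1}, just as in the increasing case. For the estimates on $\Gamma_1\cup\Gamma_2$, I would first reproduce the lower bound \eqref{e:sinh-phi}, namely $\sinh\Phi\ge C^{-1}R^{1/\mu}\rho^{1-1/\mu}$, obtained by combining $|x|\le(t_0/R)^{1/\mu}$ with Lemma \ref{l:phi}. Since $1-1/\mu = 3\delta/2$ and $\mu>1$, this yields
\begin{equation*}
\frac{1}{\sinh\Phi}\le CR^{1/\mu-1}\rho^{-3\delta/2}\le CR^{-\delta/2}\rho^{-\delta},
\end{equation*}
where the last inequality uses $\delta<1/2$ and $\rho\ge R\ge 1$. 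Starting from the identity $\tilde p = [(\sinh\Phi)^{-1}\xi + p]/[1-(\sinh\Phi)^{-1}\xi p]$ and the two differentiation formulas \eqref{e:tilde-p-derivative-1}--\eqref{e:tilde-p-derivative-2}, the required bounds for $|\dthe^i\tilde p|$ and $|\dthe^i\tilde q|$ reduce to controlling $|\dthe^i(1/\sinh\Phi)|$ and $|\dthe^i p|, |\dthe^i q|$ separately. The former are read off from \eqref{e:phi-x-more-1}, and since $\tanh\Phi\ge 1/2$ on $\Gamma_1\cup\Gamma_2$ once $R$ is large, they obey the same decay as $1/\sinh\Phi$. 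The latter are estimated by the chain-rule expansion via the transformation derivatives $x_\theta,t_\theta,x_{\theta\theta},t_{\theta\theta}$, whose bounds come from Lemma \ref{l:G-property} together with \eqref{e:b-x-more-1}--\eqref{e:phi-x-more-1}, combined with the pointwise decay $|\dx^i\dt^j p|\le C[(t_0+R)h_2]^{-4}$ of Lemma \ref{l:omega-1}(iii); this gives $|\dthe^i p|\le C\rho^{-2}$, which is absorbed into the $R^{-\delta/2}\rho^{-\delta}$ bound coming from $1/\sinh\Phi$. Assembling these ingredients yields \eqref{e:uv-gamma1-decrease}, and \eqref{e:uv-gamma1-decrease-1} then follows from \eqref{e:uv-gamma1-decrease} together with Lemma \ref{l:dthe-drho}, which supplies $|\theta'_j(\rho)|\le C/\rho$.

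For the estimates on $\Gamma_0$ I would apply part (1) of Lemma \ref{l:bphi-under-h1-h2-1}, so that $|\dthe^i(1/\sinh\Phi)|\le CR_1^{-\delta}\le CR^{-\delta}$ for $i=0,1,2$. Because $\Gamma_0$ corresponds to $|x|\le b=\max\{|b_-|,b_+\}$ with $\rho=R_1$, the functions $h_1(x),h_2(x)$ and the factors $t_0(x)+R\sim R$ are all comparable to constants times $R$, so Lemma \ref{l:omega-1}(iii) yields $|\dx^i\dt^j p|\le CR^{-4}$; combined with the chain-rule terms (bounded in the usual way using $G\le C\rho$, $B\ge 1$, and the bounds of Lemma \ref{l:bphi-under-h1-h2-1}(1)), this gives $|\dthe^i p|\le CR^{-\delta}$. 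Inserting these into \eqref{e:tilde-p-derivative-1}--\eqref{e:tilde-p-derivative-2} and the analogous identities for $\tilde q$ concludes \eqref{e:uv-gamma2-decrease}.

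The main technical obstacle is the bookkeeping of $R$-exponents versus $\rho$-exponents on $\Gamma_1\cup\Gamma_2$: the natural bound $1/\sinh\Phi\le CR^{-1+3\delta/2}\rho^{-3\delta/2}$ does not match the target $R^{-\delta/2}\rho^{-\delta}$ on the nose and has to be rebalanced, which works precisely because $\delta<1/2$ (equivalently $\mu>1$). One must check that the contribution of $|p|$ and of the $\dthe p$ chain-rule terms, which a priori carry factors of $(t+|x|)$ from $|\partial_\theta\log B|$ in \eqref{e:b-x-more-1}, is still dominated by the $1/\sinh\Phi$ contribution; this is where the full strength of the polynomial decay $|\dx^i\dt^j p|\le C[(t_0+R)h_2]^{-4}$ in Lemma \ref{l:omega-1}(iii) is essential.
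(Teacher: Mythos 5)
Your proposal follows essentially the same route as the paper's proof: the bound $\sinh\Phi\gtrsim R^{1/\mu}\rho^{1-1/\mu}$ with $1-1/\mu=3\delta/2$ rebalanced into $R^{-\delta/2}\rho^{-\delta}$, the derivative formulas for $\tilde p$ in terms of $\dthe^i p$ and $\dthe^i(1/\sinh\Phi)$ combined with part (2) (resp.\ part (1)) of Lemma \ref{l:bphi-under-h1-h2-1} on $\Gamma_1\cup\Gamma_2$ (resp.\ $\Gamma_0$), the chain-rule bound $|\dthe^i p|\le C\rho^{-2}$ from Lemma \ref{l:omega-1}(iii), and Lemma \ref{l:dthe-drho} for \eqref{e:uv-gamma1-decrease-1}. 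The only blemish is the typo $R^{1/\mu-1}$ where $R^{-1/\mu}$ is meant (you state the correct exponent in your final paragraph), and it is harmless since both powers of $R$ are at most $1$.
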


\begin{proof} %[Proof of Lemma \ref{l:boundary-bounds-1}]
	The proof below consists of several steps.
	It is easy to derive $\tilde q>\tilde p$ by \eqref{e:tilde-diff} and Lemma \ref{l:omega-1}(i).
	
	{\it Step 1. We establish estimates on $\Gamma_1\cup\Gamma_2$.} 	
	%{\sf Bounds on $\Gamma_1\cup\Gamma_2$:}
	We estimate $\tilde{p}$ first.
	By the first equality in $\eqref{e:tilde-pq-g}$, Lemma \ref{l:omega-1} (iii), \eqref{e:sinh-phi}, and $\frac1\mu=1-\frac32\delta>\frac12\delta,$ we have
	\begin{equation}
		\label{e:p-tilde-gamma1-c0}
		|\tilde{p}|\leq C|p|+\frac{C}{\sinh\Phi}\leq \frac{C}{(t_0^4h_2^4)(x)}+\frac{C}{R^{\frac1\mu }\rho^{1-\frac1\mu }}\leq CR^{-\delta/2}\rho^{-\delta}.
	\end{equation}
	Similarly to  \eqref{e:p-theta-bound} and \eqref{e:p-theta-bound-1}, we have
	\begin{align*}
		|\dthe p|\leq\frac{C}{\rho^2},\quad
		%\leq&\frac{C\sigma}{(t_0^4h^4_2)(x)}\frac{\rho|x|}{t_0(x)+|x|}(1+h_1(x))\leq\frac{C\sigma}{\rho^2}.\\
		|\dthe^2 p|\leq\frac{C}{\rho^2}.
		%\leq&\frac{C\sigma}{(t_0^4h_2^4)(x)}\frac{\rho^2|x|}{(t_0(x)+|x|)^2}(1+h_1(x))\leq\frac{C\sigma}{\rho^2}.
	\end{align*}
	Thus, by \eqref{e:sinh-phi}, \eqref{e:tilde-p-derivative-1}-\eqref{e:tilde-p-derivative-2}, and (2) in Lemma \ref{l:bphi-under-h1-h2-1},
	we can derive
	\begin{equation}\label{e:p-tilde-gamma1-c12}
		|\dthe\tilde{p}|, |\dthe^2\tilde{p}|\leq \frac{C}{\rho^2}+\frac{C}{\sinh\Phi}\leq CR^{-\delta/2}\rho^{-\delta},
	\end{equation}
	and similarly,
	\begin{equation}\label{e:q-tilde-gamma1-c012}
		|\dthe^i\tilde{q}|\leq CR^{-\delta/2}\rho^{-\delta}, \quad i=0, 1, 2.
	\end{equation}
	Using  \eqref{e:p-tilde-gamma1-c0}, \eqref{e:p-tilde-gamma1-c12}, and \eqref{e:q-tilde-gamma1-c012}, we obtain \eqref{e:uv-gamma1-decrease}. Then, \eqref{e:uv-gamma1-decrease-1} follows from Lemma \ref{l:dthe-drho} and  \eqref{e:uv-gamma1-decrease}.

	{\it Step 2. We establish estimates on $\Gamma_0$.} 	
	%{\sf Bounds on $\Gamma_0$:}
	By the first equality in $\eqref{e:tilde-pq-g},$ \eqref{e:p-q-x-derivatives}, and $\sinh\Phi\geq R$, we have
	\begin{equation}
		\label{e:p-tilde-gamma2-c0}
		|\tilde{p}|\leq \frac{C}{R_1^4}+\frac{C}{R}\leq CR^{-\delta}.
	\end{equation}
	By (1) in Lemma \ref{l:bphi-under-h1-h2-1}, we get
	\begin{align*}
		|\dthe\Phi|\leq CR_1^{-\delta}\sinh\Phi,\end{align*}
	and then
	\begin{align*}	\left|\dthe^i\left(\frac{1}{\sinh\Phi}\right)\right|\leq CR_1^{-\delta},\quad i=0, 1, 2.
	\end{align*}
	Similarly to  \eqref{e:p-theta-bound}-\eqref{e:p-theta-bound-1}, %for the estimates of $\dthe p$ and $\dthe^2 p$ on $\Gamma_1$,
	we can derive
	\begin{align*}
		|\dthe^ip|\leq \frac{C}{R_1^4h_2^4(x)}(R^2+R_1^{-\delta}h_1(x))\leq\frac{C}{R^2}, \quad i=1, 2.
	\end{align*}
	By \eqref{e:tilde-p-derivative-1}-\eqref{e:tilde-p-derivative-2}, we get
	\begin{equation}
		\label{e:p-tilde-gamma2-c12}
		\begin{split}
			|\dthe\tilde{p}|&\leq C|\dthe p|+C\left|\dthe\left(\frac{1}{\sinh\Phi}\right)\right|
			\leq CR^{-\delta},\\
			|\dthe^2\tilde{p}|&\leq C |\dthe^2p|+C\left|\dthe^2\left(\frac{1}{\sinh\Phi}\right)\right|
			+C\left(|\dthe p|+\left|\dthe\left(\frac{1}{\sinh\Phi}\right)\right|\right)^2
			\leq CR^{-\delta}.
		\end{split}
	\end{equation}
	Similarly, we deduce
	\begin{equation}
		\label{e:q-tilde-gamma2-c012-1}
		|\dthe^i\tilde{q}|\leq CR^{-\delta}, \quad i=0, 1, 2.
	\end{equation}
	With \eqref{e:p-tilde-gamma2-c0}, \eqref{e:p-tilde-gamma2-c12}, and \eqref{e:q-tilde-gamma2-c012-1} established, we can obtain \eqref{e:uv-gamma2-decrease}.
\end{proof}

%%%%%%%%%%
\section{The Local Existence of Solutions}\label{s-local}
%%%%%%%%%%

In this section and the next, we will prove the existence of solutions in $\tilde\Omega_2$ in Sobolev spaces,
extending solutions near $\partial\tilde\Omega_2$ to the entire $\tilde\Omega_2$.
We will discuss the local existence in this section and the global existence in the next section.

Note that in $\tilde \Omega_2$ the transformed solution
$$\tilde p=G\tilde F(\frac pB)=Gw,\quad \tilde q=G\tilde F(\frac qB)=Gz,$$
where $w$ and $z$ are from \eqref{1.4.1-1}, satisfies
 \begin{align}
	\dis \drho \tilde p+\frac{\tilde q}{G}\dthe \tilde p&=\frac{\tilde p-\tilde q}{2}\Big(\drho+\frac{\tilde p}{G}\dthe\Big)\log k-\tilde q(1+\tilde p^2)\drho\log G,%:=\mathfrak{f}_1(\tilde p,\tilde q,\theta,\rho),
	\label{e:tilde-p}
	\\
	\dis \drho \tilde q+\frac{\tilde p}{G}\dthe \tilde q&=\frac{\tilde q-\tilde p}{2}\Big(\drho+\frac{\tilde q}{G}\dthe\Big)\log k-\tilde p(1+\tilde q^2)\drho\log G.%:=\mathfrak{f}_2(\tilde p,\tilde q,\theta,\rho).
	\label{e:tilde-q}
\end{align}
We will  study an initial boundary value problem of the system \eqref{e:tilde-p}-\eqref{e:tilde-q} in $\tilde\Omega_2$
with the data \eqref{eq-boundary-data} prescribed on  $\partial\tilde\Omega_2$. As discussed before, if $\overline{K}$ is increasing, then $\rho^{1+\delta}k_*$ is increasing and $G^2k_*$ grows. We set
\begin{equation}\label{uv-1}
	u=\frac{\tilde{p}}{kG}+\frac{\tilde{q}}{kG}=\frac1k(w+z), \quad v=\frac{\tilde{q}}{kG}-\frac{\tilde{p}}{kG}=\frac1k(z-w),
\end{equation}
which satisfy \eqref{eq-u-general}-\eqref{eq-v-general} with $(\alpha,\beta)=(0,1).$
If  $\overline{K}$ is decreasing, then $\rho^{1+\delta}k_*$ is decreasing and $kG^2$ may decay. We then set
\begin{equation}\label{uv-2}
	u=\tilde{p}+\tilde{q}=G(w+z), \quad v=\tilde{q}-\tilde{p}=G(z-w),
\end{equation}
which satisfy \eqref{eq-u-general}-\eqref{eq-v-general} with $(\alpha,\beta)=(1,0)$. Hence, we will solve the system \eqref{e:tilde-p}-\eqref{e:tilde-q}  in the region $\tilde\Omega_2$ by solving the initial boundary value problem of the system \eqref{eq-u-general}-\eqref{eq-v-general} with the data prescribed on  $\partial\tilde\Omega_2$:
	\begin{equation}\label{eq-initial}
	\begin{cases}
		u(\theta_1(\rho),\rho)=(\tilde{\bar p}_1+\tilde{\bar q}_1)G^{\alpha-1}k^{-\beta}(\theta_1(\rho), \rho),\,\,&
		v(\theta_1(\rho),\rho)=(\tilde{\bar q}_1-\tilde{\bar p}_1)G^{\alpha-1}k^{-\beta}(\theta_1(\rho), \rho),\\
		u(\theta,R_1)=(\tilde{\bar p}_0+\tilde{\bar q}_0)G^{\alpha-1}k^\beta(\theta, R_1),\,\,&
		v(\theta,R_1)=(\tilde{\bar q}_0-\tilde{\bar p}_0)G^{\alpha-1}k^\beta(\theta, R_1),\\
		u(\theta_2(\rho),\rho)=(\tilde{\bar p}_2+\tilde{\bar q}_2)G^{\alpha-1}k^{-\beta}(\theta_2(\rho), \rho),\,\,&
		v(\theta_2(\rho),\rho)=(\tilde{\bar q}_2-\tilde{\bar p}_2)G^{\alpha-1}k^{-\beta}(\theta_2(\rho), \rho),
	\end{cases}
\end{equation}
with $(\tilde{\bar p}_i, \tilde{\bar q}_i), (i=0, 1, 2)$ given by \eqref{eq-boundary-data} or \eqref{eq-boundary-data-1}.

\smallskip
In the rest of this section, we do not need to distinguish $R$ and $R_1$, so we omit the sub-index $`` 1"$ of $R_1$. For simplicity we also write
\begin{align}\label{IC}\begin{split}
		(u_0, v_0)(\theta)&=(u, v)(\theta, R),\\
		(u_i^0, v_i^0)(\rho)&=(u, v)(\theta_i(\rho),\rho), \,\,i=1, 2.
\end{split}\end{align}
We also write $u(\rho)=u(\cdot, \rho)$, $v(\rho)=v(\cdot, \rho)$,  etc. Obviously, by Lemma \ref{l:boundary-bounds} and Lemma \ref{l:boundary-bounds-1}, we have
\[|(u_0, v_0)|,\, |(u_1^0, v_1^0)|,\, |(u_2^0, v_2^0)|\leq 1.\]
Set
$$I(\rho)=[\theta_1(\rho), \theta_2(\rho)],$$
and, for any $n\in\mathbb{N}$,
\begin{align*}
	\|(u(\rho), v(\rho))\|_n&=\|u(\rho)\|_n+\|v(\rho)\|_n,\\
	|(u(\rho), v(\rho))|_n&=|u(\rho)|_n+|v(\rho)|_n,
\end{align*}
where, for $\mathfrak{u}=u$ or $\mathfrak{u}=v$
\begin{align*}
	\|\mathfrak{u}(\rho)\|_n&:=\sum_{i=0}^n\left(\int_{I(\rho)}(\dthe^i\mathfrak{u})^2(\theta, \rho)d\theta\right)^{1/2}, %\quad
	%\|v(\rho)\|_n:=\sum_{i=0}^n\left(\int_{I(\rho)}(\dthe^iv)^2(\theta, \rho)d\theta\right)^{1/2},
	\\
	|\mathfrak{u}(\rho)|_n&:=\max\left\{\max_{\theta\in I(\rho)}|\dthe^i \mathfrak{u}|, i=0, 1, \cdots, n\right\}. %,\quad
	%|v(\rho)|_n:=\max\left\{\max_{\theta\in I(\rho)}|\dthe^i v|, i=0, 1, \cdots, n\right\}.
\end{align*}
We often write $\|\cdot\|_0$ for $\|\cdot\|$.

Set, for any $\vsi>0$,
$$\Sigma_\vsi=\{(\theta,\rho):\rho\in[R, R+\vsi], \,\theta\in I(\rho)\},$$
and
\begin{equation*}
	\begin{split}
		M_\vsi=\sup_{(\theta,\rho)\in \Sigma_\vsi}\Big\{&|\dthe^i(G^{-\alpha}k^\beta)|,\, |\dthe^i(\partial_\rho\log(G^{2-\alpha}k^\beta))|,\, |\dthe^i(\partial_\rho\log(G^{-\alpha}k^{\beta-1}))|,\\
		&|\dthe^i(G^{-\alpha}k^\beta\partial_\theta\log(G^{1-\alpha}k^\beta))|,\, |\dthe^i(G^{-\alpha}k^\beta\partial_\theta\log(G^{1-\alpha}k^{1+\beta}))|,\\
		&|\dthe^i(G^{1-2\alpha}\partial_\rho Gk^{2\beta})|, \, |\dthe^i(G^{-\alpha}k^\beta\partial_\theta\log k)| , \, i=0, 1, 2\Big\}.
	\end{split}
\end{equation*}

\begin{lemma}\label{l:local}
	There exists a small constant $\vsi>0$, depending on the boundary data in \eqref{eq-initial} (or \eqref{IC}),
	such that  there exists a unique solution $(u,v)\in C^1([R, R+\vsi]; H^2(I(\rho)))^2$ to the system \eqref{eq-u-general}-\eqref{eq-v-general} with \eqref{eq-initial}, satisfying
	\begin{equation}\label{e:local-estimate}
		\begin{split}
			\|(u(\rho), v(\rho))\|_2
			&\leq 4Q_0 \quad \text{for any } \rho\in[R, R+\vsi],\\
			v(\theta, \rho)&>0\quad \text{if } (\theta, \rho)\in \Sigma_\vsi,
		\end{split}
	\end{equation}
	where $Q_0$ is given by
	$$Q_0=\|(u_0, v_0)\|_2+C_0\max\{\|(u_i^0, v_i^0)\|_{H^2([R, R+\vsi])},\,i=1, 2 \},$$ for some constant $C_0$. %Here $\Sigma=\{(\theta, \rho); \rho\in[R, R+\vsi], \theta\in[\theta_1(\rho), \theta_2(\rho)]\}$.
\end{lemma}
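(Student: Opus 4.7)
The plan is to apply the classical local existence theory for quasilinear symmetric hyperbolic systems with data prescribed on space-like boundaries, adapted to the strip $\Sigma_\vsi$ bounded by the three portions $\Gamma_0\cap\{\rho=R\}$, $\Gamma_1\cap\{R\le\rho\le R+\vsi\}$, $\Gamma_2\cap\{R\le\rho\le R+\vsi\}$ of $\partial\tilde\Omega_2$. The key structural facts I will exploit are: (i) the system \eqref{eq-u-general}--\eqref{eq-v-general} is symmetric hyperbolic, with principal coefficient matrix $\tfrac12 G^{-\alpha}k^{\beta}\bigl(\begin{smallmatrix}u&-v\\-v&u\end{smallmatrix}\bigr)$; (ii) the boundary components $\Gamma_1$ and $\Gamma_2$ are space-like for this system (Lemma \ref{l-space}); (iii) all coefficients belong to $C^2$ with bounds captured by $M_\vsi$; and (iv) the prescribed data \eqref{eq-initial} satisfy $v>0$ and are bounded in the relevant Sobolev norms by $Q_0$ after using Lemmas \ref{l:boundary-bounds}--\ref{l:boundary-bounds-1}.

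I will construct the solution by a Picard iteration. Let $(u^{(0)},v^{(0)})$ be any $H^2$ extension of the data \eqref{eq-initial} into $\Sigma_\vsi$ satisfying $\|(u^{(0)},v^{(0)})(\rho)\|_{2}\le 2Q_0$ on $[R,R+\vsi]$ and $v^{(0)}>0$; inductively define $(u^{(n+1)},v^{(n+1)})$ as the solution to the linear symmetric hyperbolic system obtained from \eqref{eq-u-general}--\eqref{eq-v-general} by freezing $(u,v)$ at $(u^{(n)},v^{(n)})$ in the coefficients, again with the data \eqref{eq-initial}. Because the frozen system is linear and the three boundary pieces are space-like with inward orientation, classical results yield a unique $H^2$ solution of each linearized problem. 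For the key $H^2$ energy estimate I will multiply the linearized $u$-equation by $u^{(n+1)}$ and $v$-equation by $v^{(n+1)}$, add, and integrate over $I(\rho)$; the cross-terms in $\partial_\theta u^{(n+1)}\partial_\theta v^{(n+1)}$ cancel by symmetry, while the contributions at $\theta=\theta_j(\rho)$ are controlled by the space-likeness of $\Gamma_j$ (giving a favorable boundary sign, modulo lower-order terms bounded by the $H^2$ norm of the boundary data on $[R,R+\vsi]$). Repeating the argument after applying $\partial_\theta$ and $\partial_\theta^2$ to the system, together with Moser-type product estimates and the 1D embedding $H^1\hookrightarrow L^\infty$, produces the differential inequality
\begin{equation*}
\tfrac{d}{d\rho}\|(u^{(n+1)},v^{(n+1)})(\rho)\|_2^{2}
\le C(M_\vsi)\bigl(1+\|(u^{(n)},v^{(n)})\|_2\bigr)\|(u^{(n+1)},v^{(n+1)})\|_2^2 + C\bigl(\|(u_1^0,v_1^0)\|_{H^2}^2+\|(u_2^0,v_2^0)\|_{H^2}^2\bigr).
\end{equation*}
Choosing $\vsi$ small (depending on $M_\vsi$ and $Q_0$) and applying Gronwall gives the uniform bound $\|(u^{(n+1)},v^{(n+1)})(\rho)\|_2\le 4Q_0$ on $[R,R+\vsi]$ for every $n$.

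Convergence of the iterates follows by the standard contraction argument at the lower level: the difference $(U^{n},V^n)=(u^{(n+1)}-u^{(n)},v^{(n+1)}-v^{(n)})$ satisfies a linear symmetric system with coefficients bounded by $4Q_0+M_\vsi$ and zero boundary data, so an $L^2$ (or $H^1$) energy estimate plus Gronwall, combined with shrinking $\vsi$ if necessary, yields a uniform contraction. The limit $(u,v)$ lies in $C([R,R+\vsi];H^1)$, and interpolation with the uniform $H^2$ bound together with the Aubin--Lions lemma upgrades this to $(u,v)\in C([R,R+\vsi];H^2)$; the $C^1$-in-$\rho$ regularity is then read off directly from the equations \eqref{eq-u-general}--\eqref{eq-v-general}. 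Uniqueness of the solution in this class is obtained by the same energy argument applied to the difference of two putative solutions.

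For the positivity of $v$, I will use the observation already made in Section \ref{s:formula}: equation \eqref{eq-v-general} can be rewritten as
\begin{equation*}
\partial_\rho v + \tfrac12 G^{-\alpha}k^{\beta} u\, \partial_\theta v = v\,\mathcal{G}(\theta,\rho,u,v,\partial_\theta u),
\end{equation*}
with $\mathcal{G}$ bounded by $C(M_\vsi,Q_0)$ on $\Sigma_\vsi$. Along any characteristic $\rho\mapsto(\Theta(\rho),\rho)$ of the operator $\partial_\rho+\tfrac12 G^{-\alpha}k^\beta u\,\partial_\theta$, the quantity $v(\Theta(\rho),\rho)$ satisfies a linear scalar ODE, so it preserves its sign. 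Since every characteristic in $\Sigma_\vsi$ emanates from $\Gamma_0\cup\Gamma_1\cup\Gamma_2$ where $v>0$ by the data (see the last statements of Lemmas \ref{l:boundary-bounds} and \ref{l:boundary-bounds-1}), $v>0$ throughout $\Sigma_\vsi$.

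The main technical obstacle is ensuring that the boundary terms produced by integration by parts at $\theta=\theta_j(\rho)$, and by the traces at $\rho=R$ and at the later level, are all controlled by $Q_0$ with the correct sign. This is where Lemma \ref{l-space} is essential: it quantifies how strictly space-like $\Gamma_1$ and $\Gamma_2$ are, converting the boundary integrals into a manifestly non-negative contribution plus a remainder bounded by $\|(u_j^0,v_j^0)\|_{H^2([R,R+\vsi])}^2$. Once this is in hand the remainder of the argument is standard, and the factor $4$ in \eqref{e:local-estimate} absorbs the Gronwall constants accumulated over $[R,R+\vsi]$.
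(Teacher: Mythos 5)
Your overall scheme coincides with the paper's: Picard iteration on the linearized symmetric system with coefficients frozen at the previous iterate, $H^2$ energy estimates on the moving interval $I(\rho)$, a contraction argument, and positivity of $v$ by integrating \eqref{eq-v-general} along characteristics. The one place where your argument has a genuine gap is the treatment of the boundary terms in the first- and second-order energy estimates. You assert that space-likeness of $\Gamma_1,\Gamma_2$ gives these terms ``a favorable boundary sign, modulo lower-order terms bounded by the $H^2$ norm of the boundary data.'' This is not how it works: since $\theta_1'(\rho)-\lambda<0$ and $\theta_2'(\rho)-\lambda>0$ for both characteristic speeds $\lambda=\tfrac12G^{-\alpha}k^\beta(u_n\pm v_n)$, the combined boundary contribution to $\tfrac{d}{d\rho}\|\partial_\theta^j U_{n+1}\|^2$ is of the form $|\partial_\theta^j U_{n+1}|^2(\theta_2(\rho),\rho)\,(\theta_2'-\lambda)-|\partial_\theta^j U_{n+1}|^2(\theta_1(\rho),\rho)\,(\theta_1'-\lambda)\ge 0$, i.e.\ it is an energy \emph{influx}, and for $j=1,2$ it involves traces of derivatives of the unknown $U_{n+1}$, which are not prescribed data. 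Dropping or sign-absorbing these terms is not possible, and bounding them by the data requires an additional argument.

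The paper closes this gap by differentiating the prescribed relation $u_{n+1}(\theta_i(\rho),\rho)=u_i^0(\rho)$ in $\rho$, substituting $\partial_\rho u_{n+1}$ and $\partial_\rho v_{n+1}$ from the equations \eqref{e:un}--\eqref{e:vn}, and solving the resulting $2\times2$ linear algebraic system for $\partial_\theta u_{n+1}(\theta_i(\rho),\rho)$ and $\partial_\theta v_{n+1}(\theta_i(\rho),\rho)$; the determinant is controlled precisely because Lemma \ref{l-space} keeps $\theta_i'(\rho)-\tfrac12G^{-\alpha}k^\beta(u_n\pm v_n)$ bounded away from zero. This yields $|\partial_\theta U_{n+1}(\theta_i(\rho),\rho)|\le C(M_\vsi)\bigl(|(u_i^0,v_i^0)|+|((u_i^0)',(v_i^0)')|\bigr)$, and an analogous computation at second order; only then do the boundary terms reduce to $\|(u_i^0,v_i^0)\|_{H^2([R,R+\vsi])}^2$ as in your displayed inequality. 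So the role of Lemma \ref{l-space} is solvability of the trace system, not a sign condition. The remaining differences (contraction in $H^1$ followed by an upgrade to $C([R,R+\vsi];H^2)$ versus the paper's contraction directly in the $\|\cdot\|_2$ norm) are cosmetic and either route is fine.
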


\begin{proof}
	We will apply the fixed point theorem to prove Lemma \ref{l:local}.  Set
	$$S(\vsi)=\left\{U=(u, v)\in C^1([R, R+\vsi]; H^2(I(\rho)))^2: \, \max_{\rho\in[R,R+\vsi]}\|(u(\rho), v(\rho))\|_2\leq 4Q_0\right\},$$
	and let $U_1=(u_1, v_1)$ be a smooth extension defined in $\Sigma_\vsi$
	such that $\|U_1\|_2\leq 2Q_0.$ For $n\geq1,$ assume that $U_n$ is constructed in $S(\vsi)$ and let $U_{n+1}=(u_{n+1}, v_{n+1})$ be the solution of the following linear system:
	\begin{align}\label{e:un}
		\partial_\rho u_{n+1}+\frac{1}{2}G^{-\alpha}k^\beta u_{n}\partial_\theta u_{n+1}-\frac {1}{2}G^{-\alpha}k^\beta v_{n}\partial_\theta v_{n+1}
		&=\mathfrak{f}_1(U_n, U_{n+1}, \theta, \rho),\\
		\label{e:vn}
		\partial_\rho v_{n+1}+\frac{1}{2}G^{-\alpha}k^\beta u_{n}\partial_\theta v_{n+1}-\frac {1}{2}G^{-\alpha}k^\beta v_{n}\partial_\theta u_{n+1}
		&=\mathfrak{f}_2(U_n, U_{n+1}, \theta, \rho),
	\end{align}
	where
	\begin{align*}
		\mathfrak{f}_1(U_n, U_{n+1}, \theta, \rho)&=-u_{n+1}\partial_\rho\log(G^{2-\alpha}k^\beta)
		-\frac{1}{2}G^{-\alpha}k^\beta u_nu_{n+1}\partial_\theta\log(G^{1-\alpha}k^\beta)\\
		&\qquad+\frac12G^{-\alpha}k^\beta v_nv_{n+1}\partial_\theta\log(G^{1-\alpha}k^{1+\beta})
		-\frac14G^{1-2\alpha}\partial_\rho Gk^{2\beta}(u_{n}^2-v_{n}^2)u_{n+1}, \\
		\mathfrak{f}_2(U_n, U_{n+1}, \theta, \rho)
		&=-v_{n+1}\partial_\rho\log(G^{-\alpha}k^{\beta-1})+\frac{1}{2}G^{-\alpha}k^\beta u_{n}v_{n+1}\partial_\theta\log k\\
		&\qquad-\frac14G^{1-2\alpha}\partial_\rho Gk^{2\beta}(u_{n}^2-v_{n}^2)v_{n+1},
	\end{align*}
	for  $(\theta, \rho)\in \Sigma_\vsi$,
	with the same smooth initial boundary  value as in \eqref{IC}, i.e.,
	\begin{align*}U_{n+1}|_{\rho=R}&=(u_0, v_0),\\
		U_{n+1}(\theta_i(\rho)), \rho))&=(u_i^0, v_i^0), \ i=1, 2.\end{align*}
	Set
	\[w^+_{n+1}=u_{n+1}+v_{n+1},\quad w^-_{n+1}=u_{n+1}-v_{n+1}.\]
	Then,
	\begin{align}\label{eq-pqn}\begin{split}
			\drho w^+_{n+1}+\frac12G^{-\alpha}k^\beta w^-_n\dthe w^+_{n+1}
			&=\mathfrak{f}_*(w_n^+, w_n^-, w_{n+1}^+, w_{n+1}^-, \theta,\rho),\\
			\drho w^-_{n+1}+\frac12G^{-\alpha}k^\beta w^+_n\dthe w^-_{n+1}
			&=\mathfrak{f}_{**}(w_n^+, w_n^-, w_{n+1}^+, w_{n+1}^-, \theta,\rho),
	\end{split}\end{align}
	where
	\begin{align*}
		\mathfrak{f}_*(w_n^+, w_n^-, w_{n+1}^+, w_{n+1}^-, \theta,\rho)&=(\mathfrak{f}_1+\mathfrak{f}_2)(U_n, U_{n+1}, \theta, \rho),\\
		\mathfrak{f}_{**}(w_n^+, w_n^-, w_{n+1}^+, w_{n+1}^-, \theta,\rho)&=(\mathfrak{f}_1-\mathfrak{f}_2)(U_n, U_{n+1}, \theta, \rho),
	\end{align*}
	with the initial boundary data
	\[w_{n+1}^+\big|_{\Gamma_i}=(u_{n+1}+v_{n+1})\big|_{\Gamma_i},\,\,
	w_{n+1}^-\big|_{\Gamma_i}=(u_{n+1}-v_{n+1})\big|_{\Gamma_i},\quad i=0, 1, 2.
	\]
	This is a generalized Cauchy problem of a linear hyperbolic system for $(w_{n+1}^+, w_{n+1}^-)$. Indeed, by \eqref{eq-space-like} and \eqref{eq-initial}, we have
	\begin{align*}
		&\theta_1'(\rho)-\frac12G^{-\alpha}k^\beta w^-_n(\theta_1(\rho),\rho)=\theta_1'(\rho)-\tilde{\bar p}_1G^{-1}(\theta_1(\rho),\rho)\le-\frac{CB|x|}{Gt_0(x)},\\
		&\theta_1'(\rho)-\frac12G^{-\alpha}k^\beta w^+_n(\theta_1(\rho),\rho)=\theta_1'(\rho)-\tilde{\bar q}_1G^{-1}(\theta_1(\rho),\rho)\le-\frac{CB|x|}{Gt_0(x)},
	\end{align*}
	and
	\begin{align*}
		&\theta_2'(\rho)-\frac12G^{-\alpha}k^\beta w^-_n(\theta_2(\rho),\rho)=\theta_2'(\rho)-\tilde{\bar p}_2G^{-1}(\theta_2(\rho),\rho)\geq \frac{CB|x|}{Gt_0(x)},\\
		&\theta_2'(\rho)-\frac12G^{-\alpha}k^\beta w^+_n(\theta_2(\rho),\rho)=\theta_2'(\rho)-\tilde{\bar q}_2G^{-1}(\theta_2(\rho),\rho)\geq \frac{CB|x|}{Gt_0(x)}.
	\end{align*}
	Hence, $\Gamma_1$ and $\Gamma_2$ are space-like. By integrating along characteristic curves, we obtain the local existence of sufficiently smooth $U_{n+1}.$ This is standard for the generalized Cauchy problem of linear hyperbolic systems (cf. Chapter 5.4 in \cite{GuLi}).
	
	Next we prove $U_{n+1}\in S(\vsi)$. Multiplying $\eqref{e:un}$ and $\eqref{e:vn}$ by $u_{n+1}$ and $v_{n+1}$, respectively, summing the resulting equations, and integrating by parts, we get
	\begin{equation}\label{eq-u0}
		\frac{d}{d\rho}\|U_{n+1}\|^2\leq C(|U_n|_0, M_\vsi)\|U_{n+1}\|^2+C(M_\vsi)\sum_{i=1}^2(|\theta_i'|+|u_i^0|+|v^0_i|)(|u_i^0|^2+|v^0_i|^2).
	\end{equation}
	Multiplying $\dthe\eqref{e:un}$ and $\dthe\eqref{e:vn}$ by $\dthe u_{n+1}$ and $\dthe v_{n+1}$, respectively, summing the resulting equations, and integrating by parts, we get
	\begin{align*}
		\frac{d}{d\rho}\|\dthe U_{n+1}\|^2&\leq
		C(|U_n|_1, M_\vsi)(\|\dthe U_{n+1}\|^2+\|U_{n+1}\|^2)\\
		&\qquad+\sum_{i=1}^2|\dthe U_{n+1}|^2(\theta_i(\rho), \rho)(|\theta_i'(\rho)|+|u_i^0|+|v^0_i|).
	\end{align*}
	We will estimate $\dthe U_{n+1}(\theta_i(\rho), \rho).$ Note that
	\begin{align*}
		(u^0_1)'(\rho)=\frac{d}{d\rho}u_{n+1}(\theta_1(\rho),\rho)=\dthe u_{n+1}(\theta_1(\rho),\rho)\theta'_1(\rho)+\drho u_{n+1}(\theta_1(\rho),\rho).
	\end{align*}
	Solving $\drho u_{n+1}(\theta_1(\rho),\rho)$ from \eqref{e:un}, we have
	\begin{equation}\label{eq-u01}
		(u^0_1)'(\rho)=(\theta'_1(\rho)-\frac{1}{2}G^{-\alpha}k^\beta u_{n})\dthe u_{n+1}+\frac {1}{2}G^{-\alpha}k^\beta v_{n}\partial_\theta v_{n+1}+\mathfrak{f}_1.
	\end{equation}
	Similarly,
	\begin{equation}\label{eq-v01}
		(v^0_1)'(\rho)=(\theta'_1(\rho)-\frac{1}{2}G^{-\alpha}k^\beta u_{n})\dthe v_{n+1}
		+\frac {1}{2}G^{-\alpha}k^\beta v_{n}\partial_\theta u_{n+1}+\mathfrak{f}_2.
	\end{equation}
	All the functions on the right-hand sides of \eqref{eq-u01} and \eqref{eq-v01} take values on $(\theta_1(\rho),\rho).$
	By \eqref{eq-u01} and \eqref{eq-v01}, it is  straightforward to get
	\begin{align*}
		\dthe u_{n+1}=\frac{(u^0_1)'(\rho)-\mathfrak{f}_1+(v^0_1)'(\rho)-\mathfrak{f}_2}{2(\theta'_1(\rho)-\frac{1}{2}G^{-\alpha}k^\beta (u_{n}-v_n))}
		+\frac{(u^0_1)'(\rho)-\mathfrak{f}_1-(v^0_1)'(\rho)+\mathfrak{f}_2}{2(\theta'_1(\rho)-\frac{1}{2}G^{-\alpha}k^\beta( u_{n}+v_n))},\\
		\dthe v_{n+1}=\frac{(u^0_1)'(\rho)-\mathfrak{f}_1+(v^0_1)'(\rho)-\mathfrak{f}_2}{2(\theta'_1(\rho)-\frac{1}{2}G^{-\alpha}k^\beta (u_{n}-v_n))}
		-\frac{(u^0_1)'(\rho)-\mathfrak{f}_1-(v^0_1)'(\rho)+\mathfrak{f}_2}{2(\theta'_1(\rho)-\frac{1}{2}G^{-\alpha}k^\beta( u_{n}+v_n))}.
	\end{align*}
	As a consequence, $\dthe U_{n+1}(\theta_1(\rho), \rho)$ can be expressed in terms of $((u^0_1)', (v^0_1)')$ and $(u^0_1, v^0_1)$. Then, we have
	\[|\dthe U_{n+1}(\theta_1(\rho), \rho)|\leq C(M_\vsi)(|(u^0_1, v^0_1)|+|((u^0_1)',(v^0_1)')|).\]
	Similarly, we can derive
	\[|\dthe U_{n+1}(\theta_2(\rho), \rho)|\leq C(M_\vsi)(|(u^0_2, v^0_2)|+|((u^0_2)',(v^0_2)')|).\]
	Hence,
	\begin{equation}\label{eq-u1}
		\begin{split}
			\frac{d}{d\rho}\|\dthe U_{n+1}\|^2&\leq
			C(|U_n|_1, M_\vsi)(\|\dthe U_{n+1}\|^2+\|U_{n+1}\|^2)\\
			&\qquad+\sum_{i=1}^2C(M_\vsi)(|(u^0_i, v^0_i)|^2+|((u^0_i)',(v^0_i)')|^2)(|\theta_i'(\rho)|+|u_i^0|+|v^0_i|).
		\end{split}
	\end{equation}
	Multiplying $\dthe^2\eqref{e:un}$ and $\dthe^2\eqref{e:vn}$ by $\dthe^2 u_{n+1}$ and $\dthe^2 v_{n+1}$, respectively, summing the resulting equations, and integrating by parts, we get
	\begin{align*}
		\frac{d}{d\rho}\|\dthe^2 U_{n+1}\|^2&\leq
		C(|U_n|_1, \|U_n\|_2, M_\vsi)(\|\dthe^2 U_{n+1}\|^2+\|\dthe U_{n+1}\|^2+\| U_{n+1}\|^2)\\
		&\qquad+C\sum_{i=1}^2|\dthe^2 U_{n+1}|^2(\theta_i(\rho), \rho)(|\theta_i'(\rho)|+|u_i^0|+|v^0_i|).
	\end{align*}
	To calculate $\dthe^2 U_{n+1}^2(\theta_i(\rho), \rho)$, we differentiate $\dthe U(\theta_i(\rho), \rho)$ with respect to $\rho$ and then solve $\drho\dthe U(\theta_i(\rho),\rho)$ from $\dthe\eqref{e:un}$ and $\dthe\eqref{e:vn}$. Thus,
	\[|\dthe^2 U_{n+1}(\theta_i(\rho), \rho)|\leq C(M_\vsi)(|(u^0_i, v^0_i)|+|((u^0_i)',(v^0_i)')|+|((u^0_i)'',(v^0_i)'')|).\]
	Hence,
	\begin{equation}\label{eq-u2}
		\begin{split}
			\frac{d}{d\rho}\|\dthe^2 U_{n+1}\|^2&\leq
			C(|U_n|_1, \|U_n\|_2, M_\vsi)(\|\dthe^2 U_{n+1}\|^2+\|\dthe U_{n+1}\|^2+\| U_{n+1}\|^2)\\
			&\qquad+\sum_{i=1}^2C(M_\vsi)|((u^0_i)'',(v^0_i)'')|^2(|\theta_i'(\rho)|+|u_i^0|+|v^0_i|)\\
			&\qquad+\sum_{i=1}^2C(M_\vsi)(|(u^0_1, v^0_1)|^2+|((u^0_1)',(v^0_1)')|^2)(|\theta_i'(\rho)|+|u_i^0|+|v^0_i|).
		\end{split}
	\end{equation}
	Combining \eqref{eq-u0}, \eqref{eq-u1}, and \eqref{eq-u2} together, we obtain
	\begin{align*}
		\frac{d}{d\rho}\|U_{n+1}\|_2^2\leq C(\|U_n\|_2, M_\vsi)\|U_{n+1}\|_2^2+\sum_{i=1}^2\sum_{j=0}^2C(M_\vsi)(|\tfrac{d^j}{d\rho^j}(u^0_i, v^0_i)|^2)(|\theta_i'(\rho)|+|u_i^0|+|v^0_i|).
	\end{align*}
	Note that $\theta'_i(\rho)$ are uniformly bounded and $|u_i^0|\leq 1, |v_i^0|\leq 1$.
	A simple integration yields
	\begin{equation}\label{eq-U-h2}
		\|U_{n+1}\|_2^2\leq
		e^{\vsi C(\|U_n\|_2, M\vsi)}\left(\|(u_0, v_0)\|_{H^2(I(R))}^2 +C(M_\vsi)\sum_{i=1}^2\|(u^0_i, v^0_i)\|^2_{H^2([R,R+\vsi])}\right).
	\end{equation}
	Since $U_n\in S(\vsi)$, then $\|U_{n}(\rho)\|_2\leq 4Q_0,$
	and hence
	\begin{equation}\label{eq-U-2}
		\|U_{n+1}\|_2\leq
		e^{C(M_\vsi, Q_0)\vsi}\left(\|(u_0, v_0)\|_{H^2(I(R))} +C_0\sum_{i=1}^2\|(u^0_i, v^0_i)\|_{H^2([R,R+\vsi])}^2\right)\leq 4Q_0,
	\end{equation}
	by choosing $\vsi$ sufficiently small.

	Next, we prove that $\{U_n\}$ is a contraction sequence in $S(\vsi)$. By a similar argument, we have
	\begin{align*}
		&\frac12\frac{d}{d\rho}\|U_{n+2}-U_{n+1}\|_2^2
		\leq CM_\vsi Q_0^2\|U_{n+2}-U_{n+1}\|_2^2\\
		&\qquad +CM_\vsi Q_0^2\|U_{n+2}-U_{n+1}\|_2\|U_{n+1}-U_{n}\|_2.
	\end{align*}
	Since $(U_{n+2}-U_{n+1})|_{\partial\tilde\Omega_2}=0$,
	an integration over $[R, R+\vsi]$ yields
	\begin{align*}
		\|U_{n+2}-U_{n+1}\|_2\leq \max_{\rho\in[R,R+\vsi]}\|U_{n+1}-U_{n}\|_2(e^{\vsi C M_\vsi Q_0^2}-1)
		\leq\frac12\max_{\rho\in[R,R+\vsi]}\|U_{n+1}-U_{n}\|_2.
	\end{align*}
	Hence,
	\begin{align*}
		\max_{\rho\in[R,R+\vsi]}\|U_{n+2}-U_{n+1}\|_2\leq\frac12\max_{\rho\in[R,R+\vsi]}\|U_{n+1}-U_{n}\|_2,
	\end{align*}
	provided that $\vsi$ is small enough. Therefore, the fixed point theorem yields a unique limit  $U\in C^0([R, R+\vsi]; H^2(I(\rho)))$ of $\{U_n\}$  that solves \eqref{eq-u-general}-\eqref{eq-v-general} with \eqref{IC}. Moreover, we can show $U\in C^1([R, R+\vsi]; H^2(I(\rho)))$ by considering the equations for $U$.
	
	For the lower bound of $v,$ we rewrite \eqref{eq-v-general} as
	\begin{equation}\label{e:v-rewrite}
		\drho v+\frac{1}{2}G^{-\alpha}k^\beta u\dthe v=v\mathcal{F},
	\end{equation}
	where
	\begin{align*}
		\mathcal{F}=\frac {1}{2}G^{-\alpha}k^\beta\partial_\theta u-\partial_\rho\log(G^{-\alpha}k^{\beta-1})+\frac{1}{2}G^{-\alpha}k^\beta u\partial_\theta\log k+\frac14B^{1-2\alpha}\partial_\rho Gk^{2\beta}(u^2-v^2).
	\end{align*}
	Integrating \eqref{e:v-rewrite} along the characteristic curve $(X(\tau; \theta, \rho), \tau)$ defined by
	\begin{align*}
		\begin{cases}
			\displaystyle\frac{d}{d\tau}X(\tau; \theta, \rho)=\frac12(G^{-\alpha}k^\beta)(X, \tau)u(X, \tau),\\
			\displaystyle X(\rho; \theta, \rho)=\theta,
		\end{cases}
	\end{align*}
	we get
	\begin{align*}
		v(X(\rho; \theta, \rho), \rho)&=v(X(0; \theta, \rho), 0)\exp\left\{\int_R^\rho \mathcal{F}(X(\tau; \theta, \rho), \tau)d\tau\right\}\\
		&\geq v(X(0; \theta, \rho), 0)e^{- C_0M_\vsi Q^2_0\vsi}>0,
	\end{align*}
	where  $|\mathcal{F}|_0\leq CM_\vsi Q^2_0$.
\end{proof}

\section{The Global Existence of Solutions}\label{s-global}

In this section, we prove the global existence of solutions in $\tilde\Omega_2$ via an energy approach as explained in Section \ref{s-strategy}, which yields solutions in $\Omega_2,$  and then complete the proof of Theorem \ref{thrm-main1}.
We will adopt the same notations as in Section \ref{s-local}.

We obtain solutions in $\tilde\Omega_2$ by solving an initial boundary value problem for \eqref{eq-u-general}-\eqref{eq-v-general} with  \eqref{eq-initial} in the entire region $\Sigma_\infty$ given by
\[\Sigma_\infty=\{(\theta, \,\rho): \rho\in[R,\infty), \theta\in I(\rho)\}.\]
Since the local existence in the region $\Sigma_\vsi$ has already been obtained in Lemma \ref{l:local}, it remains to derive {\it a priori} estimates so that solutions can be extended to the region $\Sigma_\infty$.  Note that \eqref{eq-u-general}-\eqref{eq-v-general} is a symmetric hyperbolic system. We will employ the energy method to establish {\it a priori} estimates.
This is significantly different from the approach in \cite{H} that  mainly relies on the comparison principle.
Proposition \ref{p-priori} is the main result in this section
and is proved with help of the finite total curvature condition \eqref{e:inte-cond},
the monotonicity of $\overline K$, and \eqref{eq-a-bul}-\eqref{eq-a-bv}.

\smallskip

Given the estimates satisfied by the boundary data on $\partial\tilde\Omega_2$ in Lemma \ref{l:boundary-bounds} and Lemma \ref{l:boundary-bounds-1},  we set
\begin{equation}\label{r_1}
\eps=\eps(R)=C\max\{R^{-\delta/2}, k_*(2R)R\}.
\end{equation}
Furthermore, set
\begin{equation}\label{varphi}
\va(\rho)=\exp\bigg\{\int_R^\rho\bigg(\max_{\theta}\bigg|\frac{\partial_\tau a}{a}\bigg|+2\max_{\theta}\bigg|\frac{\partial_\tau G}{G}-\frac1\tau\bigg|\bigg)d\tau\bigg\}.\end{equation}
Introduce constants $\Lambda$, $\Theta$, and $A_0$ by
$$\Lambda=\max\bigg\{\va(\infty), \,\,1+\!\!\int_{R}^\infty \tau  k_*^2(\tau)d\tau,\,\,\, \sum_{i=1}^2\sup_{\theta, \rho}(2\rho|\dthe^i\drho\log G|+2\rho|\dthe^i\drho\log a|)\bigg\},$$
and
\begin{equation}\label{A}
	A_0=(10\Theta)^8, \quad  \Theta=\max\bigg\{\frac{\Lambda}{(1-2\delta)\delta}, {C_0+1}\bigg\},
\end{equation}
{where $C_0$ is from \eqref{e:local-estimate}.} It is not difficult to check that $\Lambda$ is finite by \eqref{eq-a-bul}-\eqref{eq-a-bv} and Lemma \ref{l:G-property}. 

{We proceed to prove the {\it a priori} estimates through the continuity argument. Our aim is to show that for any $R'>R$ it holds that 
\begin{equation}\label{e:priori-h2}
	\|(u(\rho), v(\rho))\|_2\leq \frac12A_0\eps\quad\text{for any }\rho\in[R, R'],
\end{equation}
provided $\eps$ is small enough independent of $R'.$ Due to the local existence  Lemma \ref{l:local}, 
$\|(u(\rho), v(\rho))\|_2$ is continuous with respect to $\rho\in[R,R']$ if  $R'-R$ is small. Besides, Lemma \ref{l:local} also implies the set of $R'$ satisfying \eqref{e:priori-h2} is nonempty and relatively open in $[R,\infty)$. To show that the set is also relatively closed, and thus is the entire interval $[R,\infty)$,  it suffices to prove the following proposition.
}

\begin{proposition}\label{p-priori}
For any $R'>R$, assume that $(u, v)\in C^1([R, R'], H^2(I(\rho)))$ is the  solution  to \eqref{eq-u-general}-\eqref{eq-v-general}, % in $C^1([R, R'], H^2(I(\rho)))$
with the initial boundary data  \eqref{eq-initial}. Then, there exists a small constant $\eps_1$, independent of $R'$, such that, for $\eps$ and $A_0$ given by \eqref{r_1} and  \eqref{A}, respectively, if $0<\eps\le\eps_1$ and
\begin{equation}\label{e:priori-h2-assumption-uv}
%\overline{M}_{R'}:=\sup_{R\le \rho\le R'}
\|(u(\rho), v(\rho))\|_2\leq A_0\eps\quad\text{for any }\rho\in[R, R'],
\end{equation}
then
\begin{equation*}%\label{e-h2}
\|(u(\rho), v(\rho))\|_2\leq \frac1{2}A_0\eps \quad\text{for any }\rho\in[R, R'].
\end{equation*}  
\end{proposition}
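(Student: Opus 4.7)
The overall plan is a bootstrap/continuity argument combined with the energy method, exploiting the symmetric hyperbolic structure of \eqref{eq-u-general}-\eqref{eq-v-general} together with the partial damping built into the coefficients of the zeroth-order terms. I would multiply each equation by the unknown and its $\theta$-derivatives up to order two, sum the resulting identities, and integrate by parts on $I(\rho)$. The symmetric principal part then collapses to boundary integrals on $\Gamma_1\cup\Gamma_2$, which have a favorable sign by the space-like condition of Lemma \ref{l-space} and are of size $\eps^2$ by Lemmas \ref{l:boundary-bounds}--\ref{l:boundary-bounds-1}. The damping provided by the zeroth-order coefficients, combined with the finite total curvature and the oscillation controls \eqref{e:inte-cond}-\eqref{eq-a-bv}, should absorb the nonlinear commutators via the bootstrap bound \eqref{e:priori-h2-assumption-uv}. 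The two choices $(\alpha,\beta)=(0,1)$ and $(1,0)$ from \eqref{uv-1}-\eqref{uv-2} realize complementary damping structures, so the argument splits into two cases.

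In Case 1 ($\overline K$ increasing, $(\alpha,\beta)=(0,1)$), only $u$ carries a damping term $-u\partial_\rho\log(G^2 k)$, whose coefficient is bounded below by $(1-\delta)/\rho$ up to an integrable perturbation (via Lemma \ref{l:G-property} and the monotonicity of $\rho^{1+\delta}k_*$), while the corresponding damping coefficient for $v$ vanishes identically. I would proceed hierarchically: first, close a decay estimate for $\|u\|_{H^1}$ alone using the damping together with the Sobolev embedding $\|u\|_\infty\le C\|u\|_1$ to absorb the quadratic error terms, invoking the bootstrap hypothesis $\|(u,v)\|_2\le A_0\eps$ and the integrability guaranteed by \eqref{e:k2t-integrable}, \eqref{e:bt-integrable}, and \eqref{eq-a-bv}; next, recover a decay estimate for $\|v\|_{H^1}$ from the $v$-equation by noting that every inhomogeneous term there is linear in $v$ with a coefficient involving a factor of $u$ (or $u(u^2-v^2)$), so the already-obtained $\|u\|_{H^1}$-decay controls the forcing; finally, close the $H^2$-level by differentiating both equations twice in $\theta$ and observing that $\partial_\theta^2 u$ inherits the damping of $u$, which allows the commutator terms --- whose coefficients involve $\partial_\theta^i\log G$, $\partial_\theta^i\log k$, and $G\partial_\rho G\cdot k^2$ bounded by Lemma \ref{l:G-property} --- to be absorbed using the $H^1$-decay of $u$ together with the bootstrap bound.

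In Case 2 ($\overline K$ decreasing, $(\alpha,\beta)=(1,0)$), both $u$ and $v$ carry damping: the energy identity produces damping rate at least $1/\rho$ for $u$ (via $\partial_\rho\log G$) and at least $\delta/\rho$ for $v$ (since $\partial_\rho\log(Gk)\le -\delta/\rho$ up to integrable perturbations, by the decrease of $\rho^{1+\delta}k_*$). Here I would estimate $\|(u,v)\|_{L^2}$ directly from the combined energy identity, with the nonlinear error controlled by $\rho k_*^2(A_0\eps)^2$ times integrable factors, and then step up to $\|\partial_\theta(u,v)\|_{H^1}$ by applying the same procedure to the $\theta$-differentiated system, using the $L^2$-decay already in hand to control the commutator contributions. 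In both cases one ultimately bounds $\|(u,v)\|_2$ by $C\eps + CA_0\eps\cdot(\text{integrable remainder}) + CA_0^2\eps^2$, and then picks $A_0=(10\Theta)^8$ and $\eps\le\eps_1$ small enough so that this upper bound is at most $\tfrac12 A_0\eps$.

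The main obstacle is step (ii) of Case 1: extracting genuine $H^1$-decay of $v$ when the $v$-equation has no linear damping and when the forcing coefficients, involving $k\partial_\theta\log k$ and $G\partial_\rho G\cdot k^2$, are only borderline integrable in $\rho$. This is precisely where the structural fact that every forcing term in the $v$-equation contains an explicit factor of $u$ becomes essential, together with the oscillation control \eqref{eq-a-bv} that prevents $\partial_\theta\log k=\partial_\theta\log a$ from ruining integrability against $k_*^2$.
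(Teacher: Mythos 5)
Your proposal is correct and follows essentially the same route as the paper's proof: the identical two-case split by the monotonicity of $\overline K$, the same hierarchy of energy estimates (damped decay of $\|u\|_{H^1}$ first, then $\|v\|_{H^1}$ via the $u$-decay, then the joint $H^2$ level when $\overline K$ is increasing; joint $L^2$ decay followed by derivative estimates when it is decreasing), with the boundary contributions handled by the space-like property and a Gronwall-type lemma. The only caveats are minor: in Case 1 one obtains a uniform bound rather than genuine decay for $\|v\|_{H^1}$ (and the cubic term $\tfrac14 G\partial_\rho G\,k^2v^3$ carries no factor of $u$ but is harmless since its coefficient $\sim\rho k_*^2$ is integrable), and the closing step must produce improved constants depending only on $\Theta$ at order $\eps$, with the bootstrap constant $A_0$ entering only through terms carrying extra powers of $\eps$ --- a literal bound of the form $CA_0\eps$ times an order-one integrable remainder would not close the argument.
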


%The {\it a priori} assumption \eqref{e:priori-h2-assumption-uv} is motivated
%by \eqref{e:local-estimate}. 
With the assumption \eqref{e:priori-h2-assumption-uv} and the Sobolev embedding on $I(\rho)\subset[0, \pi]$, we have
\begin{equation}
\label{e:priori-c1-assumption-uv}
|(u, v)(\rho)|_1\leq C\eps,
\end{equation}
where $C$ is a constant depending only on $A_0.$

We first prove a useful lemma.

\begin{lemma}\label{l:gronwall}
	Let $f(t)$ and $h(t)$ be positive continuous functions and $E(t)$ be a $C^1$ positive function on $[t_0, \infty)$ satisfying
	\begin{equation}\label{e:e-inequality}
		\frac{d E(t)}{dt}\leq f(t)\sqrt{E(t)}+h(t).
	\end{equation}
	Then,
	\begin{equation*}%\label{e:e-bound}
		\sqrt{E(t)}\leq \sqrt{E(t_0)}+\sqrt{\int_{t_0}^th(s)ds}+\frac{1}{2}\int_{t_0}^tf(s)ds.
	\end{equation*}
	%and also
	%\begin{equation}\label{e:e-bound-1}
	%E(t)\leq 2E(t_0)+2\int_{t_0}^th(s)ds+\left(\int_{t_0}^tf(s)ds\right)^2.
	%\end{equation}
\end{lemma}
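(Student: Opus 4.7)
The plan is to reduce \eqref{e:e-inequality} to a linear Gronwall-type inequality by setting $Y(t):=\sqrt{E(t)}$ and introducing the auxiliary function $\beta(t):=\sqrt{E(t_0)+\int_{t_0}^t h(s)\,ds}$. Since $E>0$ and $h\geq 0$, both $Y$ and $\beta$ stay strictly positive on $[t_0,\infty)$, so all divisions below are legitimate. The eventual bound on $Y$ in terms of $\beta$ and $\int f$ will then be combined with the elementary inequality $\sqrt{a+b}\leq \sqrt{a}+\sqrt{b}$ to recover the stated estimate.

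First, I would divide \eqref{e:e-inequality} by $2Y$ to obtain $Y'\leq \tfrac{1}{2}f+\tfrac{h}{2Y}$, and observe by direct computation that $\beta$ itself satisfies the identity $\beta'=\tfrac{h}{2\beta}$ with $\beta(t_0)=\sqrt{E(t_0)}$. Setting $Z(t):=Y(t)-\beta(t)$, so that $Z(t_0)=0$, and subtracting these two relations produces
\[
Z'(t)\leq \frac{f(t)}{2}+\frac{h(t)}{2}\left(\frac{1}{Y(t)}-\frac{1}{\beta(t)}\right)=\frac{f(t)}{2}-\lambda(t)\,Z(t),
\]
where $\lambda(t):=\tfrac{h(t)}{2\,Y(t)\,\beta(t)}\geq 0$. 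This is exactly a linear differential inequality of the form $Z'+\lambda Z\leq f/2$ with non-negative weight $\lambda$, which is the key structural observation.

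The last step is to solve this linear inequality by the integrating-factor method: multiplying by $\exp\!\bigl(\int_{t_0}^t\lambda(s)\,ds\bigr)$, integrating from $t_0$ to $t$, using $Z(t_0)=0$, and then dropping the factor $\exp\!\bigl(-\!\int_s^t\lambda\bigr)\leq 1$ on the right-hand side, I obtain $Z(t)\leq \tfrac{1}{2}\int_{t_0}^t f(s)\,ds$. Together with the subadditivity bound $\beta(t)\leq \sqrt{E(t_0)}+\sqrt{\int_{t_0}^t h(s)\,ds}$, this yields the claim. I do not expect any serious obstacle; the only delicate point is the pointwise positivity of $Y$ and $\beta$ that guarantees $\lambda$ is well-defined, and this is immediate from the hypotheses $E>0$ and $h\geq 0$.
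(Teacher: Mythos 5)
Your argument is correct, and it takes a genuinely different (though equally elementary) route from the paper's. The paper first integrates the differential inequality to define the majorant $V(t)=E(t_0)+\int_{t_0}^t\bigl(f\sqrt{E}+h\bigr)ds$, then differentiates $\sqrt{V}$ and exploits the two automatic inequalities $E\leq V$ and $V\geq E(t_0)+\int_{t_0}^t h$ to bound $V'/(2\sqrt{V})$ by $\tfrac12 f+h/(2\beta)$, after which a single integration finishes the job. You instead work directly with $Y=\sqrt{E}$ and compare it to the explicit supersolution $\beta$, which forces you to confront the fact that $h/(2Y)$ need not be dominated by $h/(2\beta)$; you resolve this by observing that the difference $Z=Y-\beta$ satisfies a \emph{linear} inequality $Z'+\lambda Z\leq f/2$ with nonnegative damping $\lambda=h/(2Y\beta)$, and then discard the favorable exponential factor after applying the integrating factor. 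Both proofs are complete and of comparable length; the paper's trick of dividing by $\sqrt{V}$ rather than $\sqrt{E}$ avoids the integrating-factor step entirely, while your version makes the comparison-function structure more transparent and isolates exactly where the positivity of $h$, $f$, and $E$ is used. One could shorten your write-up slightly by noting that $Z'\leq f/2-\lambda Z$ together with $Z(t_0)=0$ already yields $Z(t)\leq\tfrac12\int_{t_0}^t f$ by a standard comparison, but the integrating-factor computation you describe is a perfectly rigorous way to see this.
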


\begin{proof}
	Integrating  \eqref{e:e-inequality} over $(t_0,t)$ yields
	\begin{equation*}
		%\label{e:e-integral-bound}
		E(t)\leq E(t_0)+\int_{t_0}^t(f(s)\sqrt{E(s)}+h(s))ds=:V(t).
	\end{equation*}
Then,
%	\begin{align*}
%		V'(t)=f(t)\sqrt{E(t)}+h(t)\leq f(t)\sqrt{V(t)}+h(t)
%	\end{align*}
%and
	\begin{align*}
		\frac{V'(t)}{2\sqrt{V(t)}}=\frac{\sqrt{E(t)}}{2\sqrt{V(t)}} f(t)+\frac{h(t)}{2\sqrt{V(t)}}
		\leq \frac{1}{2}f(t)+\frac{h(t)}{2\sqrt{E(t_0)+\int_{t_0}^th(s)ds}},
	\end{align*}
and hence
	\begin{equation*}%\label{e:v-bound}
		\begin{split}
\sqrt{E(t)}&\le\sqrt{V(t)}\leq \sqrt{E(t_0)+\int_{t_0}^th(s)ds}+\frac{1}{2}\int_{t_0}^tf(s)ds\\
&\le \sqrt{E(t_0)}+\sqrt{\int_{t_0}^th(s)ds}+\frac{1}{2}\int_{t_0}^tf(s)ds.
\end{split}
	\end{equation*}
This is the desired estimate.
\end{proof}

Recall the initial boundary data in \eqref{eq-initial}:
	\begin{equation*}
		\begin{cases}
			u(\theta_1(\rho),\rho)=(\tilde{\bar p}_1+\tilde{\bar q}_1)G^{\alpha-1}k^{-\beta}(\theta_1(\rho), \rho),\,\,&
			v(\theta_1(\rho),\rho)=(\tilde{\bar q}_1-\tilde{\bar p}_1)G^{\alpha-1}k^{-\beta}(\theta_1(\rho), \rho),\\
			u(\theta,R_1)=(\tilde{\bar p}_0+\tilde{\bar q}_0)G^{\alpha-1}k^\beta(\theta, R_1),\,\,&
			v(\theta,R_1)=(\tilde{\bar q}_0-\tilde{\bar p}_0)G^{\alpha-1}k^\beta(\theta, R_1),\\
			u(\theta_2(\rho),\rho)=(\tilde{\bar p}_2+\tilde{\bar q}_2)G^{\alpha-1}k^{-\beta}(\theta_2(\rho), \rho),\,\,&
			v(\theta_2(\rho),\rho)=(\tilde{\bar q}_2-\tilde{\bar p}_2)G^{\alpha-1}k^{-\beta}(\theta_2(\rho), \rho),
		\end{cases}
	\end{equation*}
	with $(\tilde{\bar p}_i, \tilde{\bar q}_i), (i=0, 1, 2)$ given by \eqref{eq-boundary-data} or \eqref{eq-boundary-data-1}, and the simplified notation
	\begin{align*}
		\begin{split}
			(u_0, v_0)(\theta)&=(u, v)(\theta, R),\\
			(u_i^0, v_i^0)(\rho)&=(u, v)(\theta_i(\rho),\rho), \,i=1, 2.
		\end{split}
\end{align*}

As we can see in the proof of Lemma \ref{l:local}, we require the estimates of $\dthe^j(u, v)(\theta_i(\rho), \rho)$ for $i=1, 2$ and $j=0,1, 2.$
Thus, we need to derive estimates of these functions by Lemma \ref{l:boundary-bounds} and Lemma \ref{l:boundary-bounds-1}.
\begin{lemma}\label{l:derivative-boundary}
For any $R'>R$, assume that $(u, v)\in C^1([R, R'], H^2(I(\rho)))$ is the  solution  to \eqref{eq-u-general}-\eqref{eq-v-general}, % in $C^1([R, R'], H^2(I(\rho)))$
with the initial boundary data  \eqref{eq-initial}, and satisfies \eqref{e:priori-h2-assumption-uv}. Set
\[(u_i^j, v_i^j)=(\partial_\theta^ju(\theta_i(\rho),\rho), \partial_\theta^jv(\theta_i(\rho),\rho)).\]
 Then, $v>0$ on $\partial\tilde\Omega_2$ and
	\begin{equation}\label{e:initial-omega}
		\begin{split}
			&|\dthe^j u_0|, \, |\dthe^j v_0|\leq \eps\quad \text{for } \theta_1(R)\leq \theta\leq\theta_2(R), \, j=0, 1, 2;\\
			&|u_i^j|,\,|v_i^j|\leq \eps\psi_1(\rho), \quad |\theta'_i(\rho)||u_i^j|^2,\,|\theta'_i(\rho)||v_i^j|^2\leq \eps^2\psi_2(\rho) \quad\text{for } i=1, 2; \, j=0, 1, 2,
		\end{split}
	\end{equation}
	where
	\begin{align*}
	\psi_1(\rho)=
	\begin{cases}
		\rho^{-\delta/2}&\text{ if }\overline K \text{ is increasing},\\
		\rho^{-\delta} &\text{ if }\overline K \text{ is decreasing},
	\end{cases}
\end{align*}
and
\begin{align*}
\psi_2(\rho)=
\begin{cases}
	\rho^{-1-\delta} &\text{ if }\overline K \text{ is increasing},\\
	\rho^{-1-2\delta} &\text{ if }\overline K \text{ is decreasing}.
\end{cases}
\end{align*}
\end{lemma}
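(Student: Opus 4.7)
The plan is to treat Lemma \ref{l:derivative-boundary} as essentially a bookkeeping consequence of the boundary $C^2$ estimates already established in Lemmas \ref{l:boundary-bounds}-\ref{l:boundary-bounds-1}, read through the change of variables \eqref{uv-1}-\eqref{uv-2}. The splitting into two cases is governed by the monotonicity of $\overline K$: when $\overline K$ is increasing we use $(\alpha,\beta)=(0,1)$ so that $u=(\tilde p+\tilde q)/(kG)$ and $v=(\tilde q-\tilde p)/(kG)$; when $\overline K$ is decreasing we use $(\alpha,\beta)=(1,0)$ so that $u=\tilde p+\tilde q$ and $v=\tilde q-\tilde p$. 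Since the initial-boundary data in \eqref{eq-initial} are prescribed in terms of $(\tilde{\bar p}_i,\tilde{\bar q}_i,G,k)$ which are determined purely by the solution of Lemma \ref{l:omega-1} in $\Omega_1$ and the coordinate transformation, the assumption \eqref{e:priori-h2-assumption-uv} plays no role here.

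In the increasing case, I would apply Lemma \ref{l:boundary-bounds}. Triangle inequality together with \eqref{e:uv-gamma2-increase} gives on $\Gamma_0$ the bound $|\partial_\theta^j u_0|,|\partial_\theta^j v_0|\le 2Ck_*(2R_1)R$, which is $\le\eps$ after absorbing the constant into the definition \eqref{r_1} of $\eps$. Similarly \eqref{e:uv-gamma1-increase} yields on $\Gamma_1\cup\Gamma_2$ the bound $|u_i^j|,|v_i^j|\le 2CR^{-\delta/2}\rho^{-\delta/2}\le\eps\rho^{-\delta/2}=\eps\psi_1(\rho)$, and \eqref{e:uv-gamma1-increase-1} gives $|\theta_i'(\rho)||u_i^j|^2,|\theta_i'(\rho)||v_i^j|^2\le CR^{-\delta}\rho^{-1-\delta}\le\eps^2\psi_2(\rho)$, where we used $\eps^2\ge C^2R^{-\delta}$ from \eqref{r_1}.

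The decreasing case is handled in the same way using Lemma \ref{l:boundary-bounds-1}: on $\Gamma_0$ the bound \eqref{e:uv-gamma2-decrease} gives $|\partial_\theta^j u_0|,|\partial_\theta^j v_0|\le 2CR^{-\delta}\le\eps$ since $R^{-\delta}\le R^{-\delta/2}\le\eps/C$; on $\Gamma_1\cup\Gamma_2$ the estimates \eqref{e:uv-gamma1-decrease} and \eqref{e:uv-gamma1-decrease-1} give respectively $|u_i^j|,|v_i^j|\le 2CR^{-\delta/2}\rho^{-\delta}\le\eps\psi_1(\rho)$ and $|\theta_i'(\rho)||u_i^j|^2,|\theta_i'(\rho)||v_i^j|^2\le CR^{-\delta}\rho^{-1-2\delta}\le\eps^2\psi_2(\rho)$.

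The positivity $v>0$ on $\partial\tilde\Omega_2$ is immediate: both Lemma \ref{l:boundary-bounds} and Lemma \ref{l:boundary-bounds-1} assert $\tilde p<\tilde q$ on $\partial\tilde\Omega_2$, and $kG>0$ in $\tilde\Omega_2$, so $v=(\tilde q-\tilde p)/(kG)>0$ or $v=\tilde q-\tilde p>0$ according to the case. There is no substantial obstacle in the proof; the only mild technical point is verifying that the $\Gamma_0$ factor $k_*(2R_1)$ in the increasing case is comparable to $k_*(2R)$, which follows from $2R\le R_1\le c_0R$ and the monotonicity of $\rho^{1+\delta}k_*(\rho)=\sqrt{\overline K(\rho)}$ inherited from \eqref{eq-K-bar-bound}, so all such prefactors can be absorbed into the universal constant $C$ appearing in the definition \eqref{r_1} of $\eps$.
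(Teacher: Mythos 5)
Your argument is complete for $j=0$ (where $u_i^0,v_i^0$ are by definition the prescribed data) and for all $j$ on $\Gamma_0$ (where the data is a function of $\theta$, so its $\theta$-derivatives are directly bounded by Lemmas \ref{l:boundary-bounds}--\ref{l:boundary-bounds-1}), and your treatment of $v>0$ is fine. But there is a genuine gap for $j=1,2$ on $\Gamma_1\cup\Gamma_2$. The quantities $u_i^j=\partial_\theta^j u(\theta_i(\rho),\rho)$ are $\theta$-derivatives of the interior solution of the initial boundary value problem in $\tilde\Omega_2$, evaluated on the lateral boundary curves; the boundary data on $\Gamma_i$ is only a function of $\rho$ along the curve $\theta=\theta_i(\rho)$, so the only derivative it hands you directly is the tangential one, $(u_i^0)'(\rho)=\theta_i'(\rho)\,\partial_\theta u+\partial_\rho u$. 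You instead identify $\partial_\theta^j u(\theta_i(\rho),\rho)$ with $\partial_\theta^j$ of the combinations of $\tilde p/(kG)$, $\tilde q/(kG)$ (resp.\ $\tilde p$, $\tilde q$) bounded in \eqref{e:uv-gamma1-increase} and \eqref{e:uv-gamma1-decrease}. Those estimates concern the transformed $\Omega_1$-solution, which is a different function from the $\tilde\Omega_2$-solution $u$ away from the curve; the coincidence of their $\theta$-derivatives on the curve is precisely what has to be proved, and your proposal assumes it.

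The paper closes this gap in two steps. First, since $\Gamma_i$ is space-like (Lemma \ref{l-space}), one solves the $2\times2$ linear system formed by the tangential-derivative identities for $(u_i^0)'$, $(v_i^0)'$ together with the two PDEs \eqref{eq-u-general}--\eqref{eq-v-general} restricted to the boundary, obtaining explicit expressions for $\partial_\theta u$ and $\partial_\theta v$ at $(\theta_i(\rho),\rho)$ with nonvanishing denominators $\theta_i'(\rho)-\tfrac12G^{-\alpha}k^\beta(u\mp v)$. Second, one observes that $\hat p=2\tilde p/(kG)$ and $\hat q=2\tilde q/(kG)$ satisfy the same hyperbolic system \eqref{eq-pqhat}, so that $(u_i^0)'\pm(v_i^0)'$ can be rewritten via $\mathfrak f_3\pm\mathfrak f_4$ and $\partial_\theta\hat q$, $\partial_\theta\hat p$; substituting back shows $\partial_\theta u(\theta_i(\rho),\rho)=\tfrac12(\partial_\theta\hat p+\partial_\theta\hat q)\big|_{\theta=\theta_i(\rho)}$ and similarly for $v$, after which Lemmas \ref{l:boundary-bounds}--\ref{l:boundary-bounds-1} apply; the case $j=2$ is obtained by differentiating the recovered expressions once more in $\rho$ and repeating the argument. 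Your remark that \eqref{e:priori-h2-assumption-uv} plays no role is also too quick: the hypothesis is what guarantees that the $C^1([R,R'];H^2)$ solution exists up to $\rho=R'$ so that the traces $\partial_\theta^j u(\theta_i(\rho),\rho)$ are meaningful, even though the final bounds depend only on the boundary data.
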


\begin{proof}
By Lemma \ref{l:boundary-bounds} and Lemma \ref{l:boundary-bounds-1}, we have $v>0$ since $\tilde p<\tilde q$ on $\partial\tilde\Omega_2$. In the following, we only consider the case that $\overline K$ is increasing and  $i=1$, since other cases can be proved similarly. % and by Lemma \ref{l:boundary-bounds-1}.
Now, $\alpha=0, \beta=1$
	and we write \eqref{eq-u-general}-\eqref{eq-v-general} as
	\begin{align*}
		\drho u+\frac{1}{2}k(u\dthe u-v\dthe v)&=\mathfrak{f}_{3}(u, v,\theta,\rho),\\
		\drho v+\frac{1}{2}k(u\dthe v-v\dthe u)&=\mathfrak{f}_{4}(u, v, \theta,\rho),
	\end{align*}
where
	\begin{align*}
		\mathfrak{f}_{3}(u, v,\theta,\rho)&=-u\partial_\rho\log (kG^2)-\frac{1}{2}ku^2\dthe \log (kG)\\
		&\qquad+\frac{1}{2}kv^2\dthe \log (k^2G)-\frac{1}{4}G\drho Gk^2u(u^2-v^2),\\
		\mathfrak{f}_{4}(u, v, \theta,\rho)&=\frac{1}{2}kuv\dthe \log k-\frac{1}{4}G\drho Gk^2v(u^2-v^2).
	\end{align*}
First, by Lemma \ref{l:boundary-bounds} we have the first line of $\eqref{e:initial-omega}$ and also the second line of $\eqref{e:initial-omega}$ with $j=0$. To calculate $\dthe U(\theta_1(\rho), \rho)$, we differentiate $u_1^0=u(\theta_1(\rho), \rho)$ to get
	\begin{align*}
		(u^0_1)'(\rho)=\frac{d}{d\rho}u(\theta_1(\rho),\rho)=\dthe u(\theta_1(\rho),\rho)\theta'_1(\rho)+\drho u(\theta_1(\rho),\rho).
	\end{align*}
By solving $\drho u(\theta_1(\rho),\rho)$ from \eqref{eq-u-general}, we have
	\begin{equation}\label{eq-u-1}
		(u^0_1)'(\rho)=(\theta'_1(\rho)-\frac{1}{2}k u)\dthe u+\frac {1}{2}kv\partial_\theta v+\mathfrak{f}_3.
	\end{equation}
Similarly,
	\begin{equation}\label{eq-v-1}
		(v^0_1)'(\rho)=(\theta'_1(\rho)-\frac{1}{2}k u)\dthe v
		+\frac {1}{2}k v\partial_\theta u+\mathfrak{f}_4.
	\end{equation}
All functions on the right-hand sides of \eqref{eq-u-1} and \eqref{eq-v-1} take values on $(\theta_1(\rho),\rho).$ By \eqref{eq-u-1} and \eqref{eq-v-1}, it is  straightforward to check
	\begin{equation}\label{eq-utheta}
		\begin{split}
		\dthe u&=\frac{(u^0_1)'(\rho)-\mathfrak{f}_3|_{\theta=\theta_1(\rho)}+(v^0_1)'(\rho)-\mathfrak{f}_4|_{\theta=\theta_1(\rho)}}{2\big(\theta'_1(\rho)-\frac{1}{2}k(u-v)|_{\theta=\theta_1(\rho)}\big)}\\
&\qquad		+\frac{(u^0_1)'(\rho)-\mathfrak{f}_3|_{\theta=\theta_1(\rho)}-(v^0_1)'(\rho)+\mathfrak{f}_4|_{\theta=\theta_1(\rho)}}{2\big(\theta'_1(\rho)-\frac{1}{2}k( u+v)|_{\theta=\theta_1(\rho)}\big)},
		\end{split}
	\end{equation}
and
	\begin{equation}\label{eq-vtheta}
	\begin{split}
	\dthe v&=\frac{(u^0_1)'(\rho)-\mathfrak{f}_3|_{\theta=\theta_1(\rho)}+(v^0_1)'(\rho)-\mathfrak{f}_4|_{\theta=\theta_1(\rho)}}{2\big(\theta'_1(\rho)-\frac{1}{2}k(u-v)|_{\theta=\theta_1(\rho)}\big)}\\
&\qquad		-\frac{(u^0_1)'(\rho)-\mathfrak{f}_3|_{\theta=\theta_1(\rho)}-(v^0_1)'(\rho)+\mathfrak{f}_4|_{\theta=\theta_1(\rho)}}{2\big(\theta'_1(\rho)-\frac{1}{2}k( u+v)|_{\theta=\theta_1(\rho)}\big)}.
		\end{split}
\end{equation}
  Thus, by \eqref{eq-initial}, we have
  \begin{align*}
  	(u^0_1)'(\rho)+(v^0_1)'(\rho)=\left(\frac{2\tilde{\bar q}}{kG(\theta_1(\rho),\rho)}\right)'=\frac{d}{d\rho}\left(\frac{2\tilde q(\theta_1(\rho), \rho)}{kG(\theta_1(\rho),\rho)}\right),\\
  	(u^0_1)'(\rho)-(v^0_1)'(\rho)=\left(\frac{2\tilde{\bar p}}{kG(\theta_1(\rho),\rho)}\right)'=\frac{d}{d\rho}\left(\frac{2\tilde p(\theta_1(\rho), \rho)}{kG(\theta_1(\rho),\rho)}\right),
  \end{align*}
By a direct calculation based on \eqref{e:tilde-p}-\eqref{e:tilde-q}, we conclude that
\[\hat p:=\frac{2\tilde p}{kG},\quad \hat q:=\frac{2\tilde q}{kG}\]
satisfy
\begin{equation}\label{eq-pqhat}
	\begin{split}
		\drho\hat p+\frac12k\hat q\dthe\hat p=(\mathfrak{f}_3+\mathfrak{f}_4)(\tfrac12(\hat p+\hat q),\, \tfrac12(\hat q-\hat p), \theta, \rho ),\\
		\drho\hat q+\frac12k\hat p\dthe\hat q=(\mathfrak{f}_3-\mathfrak{f}_4)(\tfrac12(\hat p+\hat q),\, \tfrac12(\hat q-\hat p), \theta, \rho ).
	\end{split}
\end{equation}
Thus,
\begin{align*}
	(u^0_1)'(\rho)+(v^0_1)'(\rho)&=\drho\hat q_*(\theta_1(\rho), \rho)+\dthe\hat q_*(\theta_1(\rho), \rho)\theta_1'(\rho)\\
	&=(\mathfrak{f}_3+\mathfrak{f}_4)(\tfrac12(\hat p+\hat q),\, \tfrac12(\hat q-\hat p), \theta, \rho )|_{\theta=\theta_1(\rho)}
	+(\theta'(\rho)-\frac12k\hat p)\dthe\hat q|_{\theta=\theta_1(\rho)}.
\end{align*}
For $\theta=\theta_1(\rho),$ we have
\[u=\frac{\tilde p+\tilde q}{kG}=\frac12(\hat p+\hat q), \quad v=\frac{\tilde q-\tilde p}{kG}=\frac12(\hat q-\hat p). \]
Hence,
\begin{equation}\label{eq-u01+v01}
	(u^0_1)'(\rho)+(v^0_1)'(\rho)=\mathfrak{f}_3|_{\theta=\theta_1(\rho)}+\mathfrak{f}_4|_{\theta=\theta_1(\rho)}+(\theta_1'(\rho)-\frac12k(u-v))\dthe\hat q|_{\theta=\theta_1(\rho)}.
	\end{equation}
Similarly, we have
\begin{equation}\label{eq-u01-v01}
	(u^0_1)'(\rho)-(v^0_1)'(\rho)=\mathfrak{f}_3|_{\theta=\theta_1(\rho)}-\mathfrak{f}_4|_{\theta=\theta_1(\rho)}+(\theta_1'(\rho)-\frac12k(u+v))\dthe\hat p|_{\theta=\theta_1(\rho)}.
	\end{equation}
By substituting \eqref{eq-u01+v01} and \eqref{eq-u01-v01} in \eqref{eq-utheta} and \eqref{eq-vtheta}, we conclude
\begin{align*}
	&\dthe u(\theta_1(\rho),\rho)=\frac12(\dthe\hat p|_{\theta=\theta_1(\rho)}+\dthe\hat q|_{\theta=\theta_1(\rho)})=\dthe\Big(\frac{\tilde p+\tilde q}{kG}\Big)\Big|_{\theta=\theta_1(\rho)},\\
	&	\dthe v(\theta_1(\rho),\rho)=\frac12(-\dthe\hat p|_{\theta=\theta_1(\rho)}+\dthe\hat q|_{\theta=\theta_1(\rho)})=\dthe\Big(\frac{-\tilde p+\tilde q}{kG}\Big)\Big|_{\theta=\theta_1(\rho)}.
\end{align*}
By Lemma \ref{l:boundary-bounds}, we get the second line of
$\eqref{e:initial-omega}$ with $j=1.$ Differentiating $\dthe u(\theta_1(\rho), \rho)$ of \eqref{eq-utheta} and $\dthe v(\theta_1(\rho), \rho)$ of \eqref{eq-vtheta} one more time with respect to $\rho$, respectively, and preceding similarly, we get the second line in $\eqref{e:initial-omega}$ with $j=2$.
\end{proof}

The proof of Proposition \ref{p-priori} is divided into two cases
depending on the monotonicity of  $\overline K$.
Note that $\overline K$ and $\rho^{1+\delta}k_*$, for some constant $\delta\in (0,1/2)$,
have the same type of monotonicity.
Recall that $\delta$ and $k_*$ are introduced in \eqref{delta} and \eqref{k-star}, respectively.

%\subsubsection{Case 1: \texorpdfstring{$\rho^{1+\delta}k_*$ is increasing}{}} %\label{s-increase}

We first consider the case that $\rho^{1+\delta}k_*$ is increasing.
We choose $(\alpha, \beta)=(0, 1)$ and write
the system \eqref{eq-u-general}-\eqref{eq-v-general} as
\begin{align}
\drho u+\frac{1}{2}k(u\dthe u-v\dthe v)&=-u\partial_\rho\log (kG^2)-\frac{1}{2}ku^2\dthe \log (kG)\notag\\
&\qquad+\frac{1}{2}kv^2\dthe \log (k^2G)-\frac{1}{4}G\drho Gk^2u(u^2-v^2),\label{e:u}\\
\drho v+\frac{1}{2}k(u\dthe v-v\dthe u)&=\frac{1}{2}kuv\dthe \log k-\frac{1}{4}G\drho Gk^2v(u^2-v^2).\label{e:v}
\end{align}
Since $\rho^{1+\delta}k_*$ is increasing, we have
\begin{equation}\label{e:damping}
	\begin{split}-\drho\log(kG^2)&=-\drho\log (\rho^2k_*)-\drho\log a-2\drho\log (\rho^{-1}G)\\
&=-\drho\log(\rho^{1+\delta}k_*)-\drho\log \rho^{1-\delta}-\drho\log a-2\drho\log (\rho^{-1}G)\\
&\leq-\frac{1-\delta}{\rho}-\drho\log a-2\drho\log (\rho^{-1}G).
\end{split}
\end{equation}
This indicates that the equation \eqref{e:u} has a damping effect and then $u$ should have better decay properties than $v$ for large $\rho$. We will
establish the following decay of $\|u(\rho)\|_1$ and uniform estimates for $\|v(\rho)\|_1$.

\begin{lemma}\label{l:u-ux-estimate}
 Assume that $\rho^{1+\delta}k_*$ is increasing for some $\delta\in (0,1/2)$. For any $R'>R$, assume that $(u, v)\in C^1([R, R'], H^2(I(\rho)))$ is the  solution  to \eqref{e:u}-\eqref{e:v},
 	with the initial boundary data  \eqref{eq-initial}, and satisfies \eqref{e:priori-h2-assumption-uv}.  Then, for any $\rho\in [R,R']$,
\begin{align}
\|u(\rho)\|\leq\Theta\eps(\rho k_*+\rho^{-\delta/2})+ \Theta\eps R^{\delta}\rho^{-\delta}%\frac{R^{1+2\delta}k_*(R)}{\rho^{1+2\delta}k_*(\rho)}
\leq 3\Theta\eps, \label{e:u-esitmate}
\end{align}
\begin{align}\|\dthe u(\rho)\|\leq 3\Theta^3\eps(\rho k_*+\rho^{-\delta/2})+2\Theta^3\eps R^{\delta}\rho^{-\delta}%\frac{ R^{1+2\delta}k_*(R)}{\rho^{1+2\delta}k_*(\rho)}
\leq 8\Theta^3\eps, \label{e:ux-esitmate}
\end{align}
and
\begin{align}
	\|v(\rho)\|\leq 2\Theta\eps, \quad \|\dthe v(\rho)\|\leq 2\Theta\eps, \label{e:v-vx-esitmate}\end{align}
where $\Theta$ is give by \eqref{A}.
\end{lemma}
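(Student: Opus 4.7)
The proof rests on the asymmetric energy scheme advertised in the paper's strategy section: first exploit the damping in \eqref{e:u} to extract decay of $\|u\|$ and $\|\dthe u\|$, then feed that decay into the undamped equation \eqref{e:v} to obtain a uniform bound for $\|v\|$ and $\|\dthe v\|$. Throughout, the a priori hypothesis \eqref{e:priori-h2-assumption-uv} together with the Sobolev embedding \eqref{e:priori-c1-assumption-uv} on the finite interval $I(\rho)$ provides the baseline $C^1$ smallness of $(u,v)$.

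For the $u$--estimates, I would multiply \eqref{e:u} by $u$ and integrate over $I(\rho)$, using Leibniz's rule for the moving endpoints $\theta_i(\rho)$. The transport terms $\tfrac12 k(u\dthe u - v\dthe v)u$ reduce after integration by parts to boundary contributions plus cubics of the form $\int \dthe(ku)\,v^2$. The damping enters through $-u^2\partial_\rho\log(kG^2)$: by \eqref{e:damping},
\[
-\int u^2\partial_\rho\log(kG^2)\,d\theta \le -\frac{1-\delta}{\rho}\|u\|^2 + \eta(\rho)\|u\|^2,
\]
with $\eta\in L^1([R,\infty))$ by \eqref{eq-a-bv} and \eqref{e:bt-integrable}. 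The remaining nonlinearities on the right-hand side of \eqref{e:u} all carry at least one factor $k_*$ or $\rho k_*^2$, both absolutely integrable by \eqref{e:k2t-integrable}, and their pointwise size is $O(\eps)$ via \eqref{e:priori-c1-assumption-uv}. Boundary contributions are controlled by $\eps^2\rho^{-1-\delta}$ through Lemma \ref{l:derivative-boundary}. Multiplying the resulting differential inequality by the integrating factor $\rho^{2(1-\delta)}\va(\rho)^{-2}$, using $\va\le\Lambda\le\Theta$ from \eqref{varphi}--\eqref{A}, removes the damping term, and applying Lemma \ref{l:gronwall} to the resulting inequality yields \eqref{e:u-esitmate}. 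Differentiating \eqref{e:u} once in $\theta$, pairing with $\dthe u$, and repeating --- with commutators absorbed into $\eta$-type perturbations using \eqref{eq-a-bul} and Lemma \ref{l:G-property} --- gives \eqref{e:ux-esitmate}.

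For $v$, I would multiply \eqref{e:v} by $v$ and integrate by parts; the transport terms consolidate to $\tfrac34\int v^2\,\dthe(ku)\,d\theta$, which by Sobolev embedding on $I(\rho)$ is bounded by
\[
C\bigl(k_*\|\dthe u\|_1 + |\dthe k|_0\,|u|_0\bigr)\|v\|^2 .
\]
By the decay $\|\dthe u\|_1\le C\eps\rho^{-\delta/2}$ from the previous step and Cauchy--Schwarz,
\[
\int_R^\infty k_*\|\dthe u\|_1\,d\rho \le \Big(\int_R^\infty \rho k_*^2\,d\rho\Big)^{1/2}\Big(\int_R^\infty \rho^{-1}\|\dthe u\|_1^2\,d\rho\Big)^{1/2} < \infty
\]
thanks to \eqref{e:k2t-integrable}; the remaining term $\tfrac14 G\drho G k^2 v^2(u^2-v^2)$ contributes an integrable $\rho k_*^2\eps^2$. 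Lemma \ref{l:gronwall} with initial datum $\|v(R)\|\le\eps$ and the boundary bounds from Lemma \ref{l:derivative-boundary} then yields $\|v\|\le 2\Theta\eps$. The bound on $\|\dthe v\|$ follows by an identical scheme after differentiating \eqref{e:v} in $\theta$, using the a priori bound on $\|\dthe^2 u\|$ from \eqref{e:priori-h2-assumption-uv} combined with the decay of $\|\dthe u\|_1$. The main obstacle is precisely this $v$--step: because \eqref{e:v} carries no damping, the closure relies in an essential way on both the previously established decay of $\|\dthe u\|_1$ and on the finite total curvature condition in the form \eqref{e:k2t-integrable}; without either, the coefficient appearing in the Gronwall inequality for $v$ would fail to be integrable in $\rho$ and the uniform bound $2\Theta\eps$ could not be recovered.
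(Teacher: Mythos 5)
Your overall architecture matches the paper's: damping for $u$ first, then feed the resulting decay of $\|u\|$ and $\|\dthe u\|$ into the undamped $v$-equation, with Lemma \ref{l:gronwall} closing each step. The $u$-part is essentially sound (your integrating factor $\rho^{2(1-\delta)}\va^{-2}$ differs from the paper's $\rho^4k_*^2\va^{-2}$, but both work \emph{provided} you carry out the integral estimates that exploit the monotonicity of $\rho^{1+\delta}k_*$, as in \eqref{e:int-ineq}; without them you do not recover the $\rho k_*$ decay in \eqref{e:u-esitmate}, which is needed later to make $k_*\|u\|$ integrable).

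The genuine gap is in your $v$-step. You integrate the transport term by parts to get $\int_{I(\rho)} v^2\,\dthe(ku)\,d\theta$, bound it by $C\bigl(k_*\|\dthe u\|_1+|\dthe k|_0|u|_0\bigr)\|v\|^2$, and then invoke ``the decay $\|\dthe u\|_1\le C\eps\rho^{-\delta/2}$ from the previous step.'' No such decay is available: the previous step controls $\|u\|$ and $\|\dthe u\|$ (i.e.\ $\|u\|_1$), whereas $\|\dthe u\|_1=\|\dthe u\|+\|\dthe^2u\|$ involves $\|\dthe^2u\|$, for which the only information at this stage is the non-decaying a priori bound $\|\dthe^2u\|\le A_0\eps$ from \eqref{e:priori-h2-assumption-uv} (and even Lemma \ref{l:uv-h2-estimate}, proved afterwards, gives no decay). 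Consequently your Cauchy--Schwarz display requires $\int_R^\infty\rho^{-1}\|\dthe u\|_1^2\,d\rho<\infty$, which fails for a merely bounded $\|\dthe^2u\|$, and the Gronwall coefficient for $v$ is not shown to be integrable. The integration by parts is what creates the problem: it forces you to control $\dthe u$ in $L^\infty$ with decay, which is exactly the quantity you cannot decay. The repair is either (i) the paper's route: do \emph{not} integrate by parts, and estimate $\int k|uv\dthe v|\le Ck_*|v|_0|\dthe v|_0\|u\|_{L^1}\le C\eps^2k_*\|u\|$ and $\int k v^2|\dthe u|\le Ck_*|v|_0^2\|\dthe u\|_{L^1}\le C\eps^2k_*\|\dthe u\|$, putting $u$ and $\dthe u$ in $L^2$ (where decay \emph{is} known) and $v,\dthe v$ in $L^\infty$ (where the a priori $O(\eps)$ bound suffices); or (ii) keep your form but use the interpolation $|\dthe u|_0\le C\sqrt{\|\dthe u\|\,\|\dthe^2u\|}\le C\eps\rho^{-\delta/4}$, so that $k_*|\dthe u|_0=o(\eps\rho^{-1-\delta/4})$ is integrable. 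The same correction is needed in your $\|\dthe v\|$ step, which again invokes decay of $\|\dthe u\|_1$. Finally, the lemma's explicit constants ($\Theta\eps$, $3\Theta^3\eps$, $2\Theta\eps$, \dots) are not optional decoration: they are what allows the continuity argument to close at $\tfrac12A_0\eps$, so the quantitative bookkeeping you omit must ultimately be supplied.
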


%{(Comment: in the proof  we use finite total curvature, increasing property of $\overline K$ and \eqref{eq-a-bul}. )}

\begin{proof}
	We rewrite \eqref{e:u} as
\begin{align*}
\drho u&=-u\partial_\rho\log (kG^2)-\frac{1}{2}ku^2\dthe \log (kG)
+\frac{1}{2}kv^2\dthe \log (k^2G)\\
&\qquad -\frac{1}{2}k(u\dthe u-v\dthe v)-\frac{1}{4}G\drho Gk^2u(u^2-v^2).\end{align*}
By multiplying the above equation by $u$ and integrating the resulting equation over $I(\rho)$, we have
\begin{align*}
\frac12\frac{d}{d\rho}\int_{I(\rho)}u^2d\theta=-\int_{I(\rho)}u^2\drho\log(kG^2)d\theta+\Psi_1+J_1,
\end{align*}
where
\begin{align*}
\Psi_1&=\frac{1}{2}(u_2^0)^2\theta'_2(\rho)-\frac{1}{2}(u_1^0)^2\theta'_1(\rho),
\end{align*}
with the boundary data $u^0_i(\rho)$ given by \eqref{IC} for $i=1,2$, and
\begin{align*}
J_1&=-\frac{1}{2}\int_{I(\rho)}ku\big[u^2\dthe \log (kG)
-v^2\dthe \log (k^2G)\big]d\theta\\
&\qquad -\frac{1}{2}\int_{I(\rho)}ku(u\dthe u-v\dthe v)d\theta-\frac{1}{4}\int_{I(\rho)}G\drho Gk^2u^2(u^2-v^2)d\theta.
\end{align*}
By \eqref{e:initial-omega}, it is straightforward to check
\begin{align*}
|\Psi_1|\leq \eps^2\rho^{-1-\delta}.
\end{align*}
Recall that $\varphi$ is defined in \eqref{varphi}. Then,
$$\frac{\va'}{\va}=\max_{\theta}\bigg|\frac{\partial_\rho a}{a}\bigg|+2\max_{\theta}\bigg|\frac{\partial_\rho G}{G}-\frac1\rho\bigg|\geq0.$$
By \eqref{eq-a-bul}, \eqref{e:priori-h2-assumption-uv}, %the conditions on $a$,
and Lemma \ref{l:G-property}, we get
\begin{align*}
-\drho\log(kG^2)\le-\drho\log(\rho^2k_*)+\max_{\theta}|\drho\log(\rho^{-2}G^2a)|\le -\drho\log(\rho^2k_*)+\drho\log\va,
\end{align*}
and
\begin{align*}
|J_1|&\le Ck_*\int_{I(\rho)}|u|(u^2+v^2+|u\dthe u|+|v\dthe v|)d\theta
+C\rho k_*^2\int_{I(\rho)}u^2(u^2+v^2)d\theta\\
&\leq C\eps^2k_*\|u\|.
\end{align*}
Then, for any $\rho\in [R,R']$,
\begin{equation*}
\frac12\frac{d}{d\rho}\int_{I(\rho)}u^2d\theta\leq -\drho\log\bigg(\frac{\rho^2k_*}{\va}\bigg)\int_{I(\rho)}u^2d\theta+C\eps^2k_*\|u\|+\eps^2\rho^{-1-\delta},
\end{equation*}
and hence
\begin{align*}
\frac{d}{d\rho}\left(\|u\|^2\frac{\rho^4k_*^2}{\va^2}\right)\leq C\eps^2\frac{\rho^2k_*^2}{\va}\left(\|u\|\frac{\rho^2k_*}{\va}\right)+2\eps^2\frac{k_*^2\rho^{3-\delta}}{\va^2}.
\end{align*}
By Lemma \ref{l:gronwall}, we get
$$\|u(\rho)\|\leq \frac{\va R^2k_*(R)}{\rho^2k_*}\|u(R)\|+
\sqrt{\frac{\va^2}{\rho^4k_*^2}\int_R^\rho2\eps^2\frac{s^{3-\delta}k_*^2(s)}{\va(s)^2}ds}+
\frac{\va}{2\rho^2k_*}\int_R^\rho C\eps^2\frac{s^2k_*^2(s)}{\va(s)}ds.$$
Since $k_*(\rho)\rho^{1+\delta}$ is increasing  and $\varphi(R)=1, \varphi(\rho)\ge 1, \forall \rho\ge R$,
we obtain
\begin{equation}\label{e:int-ineq}
	\begin{split}
		\frac{1}{\rho^4k_*^2(\rho)}\int_R^\rho\frac{s^{3-\delta}k_*^2(s)}{\va(s)^2}ds
		\leq\frac{\rho^{2+2\delta}k_*^2(\rho)}{\rho^4k_*^2(\rho)}\int_{R}^\rho s^{1-3\delta}ds
		\leq\frac{\rho^{-\delta}}{2-3\delta}, \\
\frac{1}{\rho^2k_*(\rho)}\int_R^\rho\frac{s^2k_*^2(s)}{\va(s)}ds
\leq\frac{\rho^{2+2\delta}k_*^2(\rho)}{\rho^2k_*(\rho)}\int_{R}^\rho s^{-2\delta}ds
\leq\frac{\rho k_*(\rho)}{1-2\delta}.
\end{split}
\end{equation}
Hence,
 \begin{align*}
 \|u\|&\leq\frac{\eps\Lambda R^{2}k_*(R)}{\rho^2k_*(\rho)}+\frac{\Lambda \eps\rho^{-\delta/2}}{\sqrt{2-3\delta}}+\frac{C\Lambda\eps^2\rho k_*}{1-2\delta}\\
& \leq\eps\Theta R^{\delta}\rho^{-\delta}+\Theta\eps(\rho k_*+\rho^{-\delta/2})\le 3\Theta\eps,
 \end{align*}
by choosing $\eps$  sufficiently small and using $2-\delta>1+\delta$. Thus, we conclude \eqref{e:u-esitmate}.

Next, differentiating $\eqref{e:u}$ with respect to $\theta$ yields
\begin{equation}\label{e:ux}
\begin{split}
\drho(\dthe u)
&=-\dthe u\partial_\rho\log (kG^2)-u\dthe\drho\log(kG^2)\\
&\qquad-\frac{1}{2}\dthe(ku^2\dthe \log (kG))
+\frac{1}{2}\dthe(kv^2\dthe \log (k^2G))\\
&\qquad-\frac{1}{2} \dthe(k(u\dthe u-v\dthe v))-\frac{1}{4}\dthe(G\drho Gk^2u(u^2-v^2)).
\end{split}
\end{equation}
By multiplying \eqref{e:ux} by $\dthe u$ and integrating the resulting equation over $I(\rho)$, we have
\begin{align*}
\frac12\frac{d}{d\rho}\int_{I(\rho)}|\dthe u|^2d\theta
=-\int_{I(\rho)}|\dthe u|^2\drho\log(kG^2)d\theta+\Psi_2+J_2+J_3,
\end{align*}
where
\begin{align*}
\Psi_2=&\frac{1}{2}(u_2^1)^2\theta'_2(\rho)-\frac{1}{2}(u_1^1)^2\theta'_1(\rho),\\
J_2=&\int_{I(\rho)}u\dthe u\dthe\drho\log(kG^2)d\theta,
\end{align*}
and
\begin{align*}J_3&=
-\frac{1}{2}\int_{I(\rho)}\dthe u\big[\dthe(ku^2\dthe \log (kG))
-\dthe(kv^2\dthe \log (kG))\big]d\theta\\
&\qquad-\frac{1}{2}\int_{I(\rho)}\dthe u\dthe(k(u\dthe u-v\dthe v))d\theta-\frac{1}{4}\int_{I(\rho)}\dthe u\dthe(G\drho Gk^2u(u^2-v^2))d\theta.
\end{align*}
By \eqref{e:initial-omega}, it is straightforward to check
$$|\Psi_2|\leq \eps^2\rho^{-1-\delta}, ~~|J_3|\le C\eps^2k_*\|\dthe u\|.$$
Moreover, it follows from \eqref{e:u-esitmate} that
\begin{align*}
|J_2|&\le \frac{\Lambda}{\rho}\int_{I(\rho)}|u\dthe u|d\theta
\le \frac{\Lambda}{\rho}\|u\|\|\dthe u\| \\
&\le \eps\Lambda\Theta R^{\delta}{\rho^{-1-\delta}} \|\dthe u\|+\Lambda\Theta\eps(k_*+\rho^{-1-\delta/2})\|\dthe u\|.
\end{align*}
Then,
\begin{align*}
\frac12\frac{d}{d\rho}\int_{I(\rho)}|\dthe u|^2d\theta&\leq-\drho\log\bigg(\frac{\rho^2k_*}{\va}\bigg)\int_{I(\rho)}|\dthe u|^2d\theta+\eps^2\rho^{-1-\delta}+C\eps^2k_*\|\dthe u\|\\
&\qquad+\eps\Lambda\Theta R^{\delta}{\rho^{-1-\delta}}\|\dthe u\|+\Lambda\Theta\eps(k_*+\rho^{-1-\delta/2})\|\dthe u\|,
\end{align*}
and hence
\begin{align*}
\frac{d}{d\rho}\left(\|\dthe u\|^2\frac{\rho^4k_*^2}{\va^2}\right)\leq & \left((C\eps^2+2\Lambda\Theta\eps)\frac{\rho^2k_*^2}{\va}+2\Lambda\Theta\eps
\frac{k_*\rho^{1-\delta/2}}{\va}+\frac{2\Lambda\Theta\eps R^{\delta}\rho^{1-\delta} k_*}{\va}\right)\left(\|\dthe u\|\frac{\rho^2k_*}{\va}\right)\\
&\quad +2\eps^2\frac{k_*^2\rho^{3-\delta}}{\va^2}.
\end{align*}
By Lemma \ref{l:gronwall}, we get
\begin{align*}
\|\dthe u(\rho)\|\frac{\rho^2k_*}{\va}&\leq \|\dthe u(R)\|R^2k_*(R)+
\sqrt{\int_R^\rho2\eps^2\frac{s^{3-\delta}k_*^2(s)}{\va(s)^2}ds}\\
&\qquad +
(C\eps^2+\Lambda\Theta\eps)\int_R^\rho \frac{s^2k_*^2(s)}{\va(s)}ds
+\Lambda\Theta\eps\int_R^\rho \frac{k_*(s)s^{1-\delta/2}}{\va(s)}ds
\\
&\qquad +\eps\Lambda\Theta R^{\delta}\int_R^\rho\frac{k_*(s)s^{1-\delta}}{\va(s)}ds.
\end{align*}
Similarly as for \eqref{e:int-ineq}, we have
\begin{align*}
\|\dthe u(\rho)\|
&\leq \frac{\eps\Lambda R^{2}k_*(R)}{\rho^{2}k_*}+\frac{\Lambda\eps\rho^{-\delta/2}}{\sqrt{2-3\delta}}
+\frac{C\Lambda\eps^2+2\Lambda\Theta\eps}{(1-2\delta)}\rho k_*\\
&\qquad +\frac{2\Lambda^2\Theta\eps\rho^{-\delta/2}}{2-3\delta}+
\frac{\eps\Lambda^2\Theta R^{\delta}\rho^{-\delta}}{1-2\delta}.
\end{align*}
 Thus, we obtain \eqref{e:ux-esitmate} by choosing $\eps$  sufficiently small and using $2-\delta>1+\delta$.
 
Last, we apply the estimates of $\|u(\rho)\|_1$ in \eqref{e:u-esitmate} and \eqref{e:ux-esitmate} to control $\|v(\rho)\|_1.$
By multiplying \eqref{e:v} by $v$, we have
$$\frac12\frac{d}{d\rho}\int_{I(\rho)}v^2d\theta=\Psi_3+J_4,$$
where
$$\Psi_3=\frac{1}{2}(v_2^0)^2\theta'_2(\rho)-\frac{1}{2}(v_1^0)^2\theta'_1(\rho),$$
and
$$J_4=\int_{I(\rho)}\big[-\frac{1}{2}k(uv\dthe v-v^2\dthe u)+\frac{1}{2}kuv^2\dthe \log k-\frac{1}{4}G\drho Gk^2v^2(u^2-v^2)\big]d\theta.$$
By the estimates of $\|u\|_1$ in \eqref{e:u-esitmate}-\eqref{e:ux-esitmate}, we have
	\begin{align*}
		|J_4|&\leq
		\int_{I(\rho)}[Ck_*|v|(|u\dthe v|+|v\dthe u|+|uv|)+C\rho k_*^2v^2(u^2+v^2)]d\theta\\
		&\leq C\eps^2k_*(\|u\|+\|\dthe u\|)+C\eps^3\rho k_*^2\\
		&\leq C\eps^3R^{\delta}\rho^{-1-\delta}+C\eps^3\rho k_*^2+C\eps^3\rho^{-1-\delta/2}.
	\end{align*}
Similarly, we obtain
	\begin{align*}
	\int_{I(\rho)}v^2d\theta\leq 4\Theta^2\eps^2.
\end{align*}
This implies the first inequality in \eqref{e:v-vx-esitmate}.

Similarly, by differentiating \eqref{e:v} with respect to $\theta$, multiplying the resulting equation by $\partial_\theta v$,
and integrating over $I(\rho)$, %In the same way as in \eqref{e:ux},
we have	
	\begin{align*}
		\frac12\frac{d}{d\rho}\int_{I(\rho)}|\dthe v|^2d\theta=\Psi_4+J_5,\end{align*}
	where
	$$\Psi_4=\frac{1}{2}(v_2^1)^2\theta'_2(\rho)-\frac{1}{2}(v_1^1)^2\theta'_1(\rho)-\frac{1}{2}\dthe v(k(u\dthe v-v\dthe u))\bigg|_{\theta_1}^{\theta_2},
		$$
and
	\begin{align*}
		J_{5}&=\frac{1}{2}\int_{I(\rho)}\dthe^2 v(k(u\dthe v-v\dthe u))d\theta
		+\frac{1}{2}\int_{I(\rho)}\dthe v\dthe(kuv\dthe \log k)d\theta\\
		&\qquad-\frac{1}{4}\int_{I(\rho)}\dthe v\dthe(G\drho Gk^2v(u^2-v^2))d\theta.
	\end{align*}
By \eqref{e:u-esitmate} and \eqref{e:ux-esitmate} again, we get
	\begin{align*}
		|J_{5}| \leq C\eps^2k_*(\|u\|+\|\dthe u\|)+C\eps^3\rho k_*^2
		\leq C\eps^3R^\delta\rho^{-1-\delta}+C\eps^3\rho k_*^2+C\eps^3\rho^{-1-\delta/2}.
	\end{align*}
Thus, we obtain
the second inequality in \eqref{e:v-vx-esitmate}.
\end{proof}

\begin{lemma}\label{l:uv-h2-estimate}
Assume that $\rho^{1+\delta}k_*$ is increasing for some $\delta\in (0,1/2)$. For any $R'>R$, assume that $(u, v)\in C^1([R, R'], H^2(I(\rho)))$ is the  solution  to \eqref{e:u}-\eqref{e:v},
	with the initial boundary data  \eqref{eq-initial}, and satisfies \eqref{e:priori-h2-assumption-uv}.  Then, for any $\rho\in [R,R']$,
\begin{align}
%\|\dthe^2u(\rho)\|,\,\|\dthe^2v(\rho)\|
\|\dthe^2 u(\rho)\|\leq 100\Theta^6\eps,\quad\| \dthe^2v(\rho)\|\leq 100\Theta^6\eps.\label{e:uv-h2-estimate}
\end{align}
\end{lemma}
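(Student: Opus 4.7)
The plan is to differentiate both \eqref{e:u} and \eqref{e:v} twice in $\theta$, run the same energy estimate as in Lemma \ref{l:u-ux-estimate} (exploiting the symmetric hyperbolic structure), and then absorb all nonlinear terms using the $H^1$ decay of $(u,v)$ already established in Lemma \ref{l:u-ux-estimate} together with the partial damping produced by $-\drho\log(kG^2)$ as in \eqref{e:damping}.

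More precisely, writing $U=(u,v)^T$, the system takes the symmetric form
\begin{equation*}
\drho U+\tfrac12 k\,\mathbf{A}(U)\,\dthe U=\mathbf{F}(U,\theta,\rho),
\qquad \mathbf{A}(U)=\begin{pmatrix}u&-v\\ -v&u\end{pmatrix}.
\end{equation*}
Applying $\dthe^2$ to each equation, multiplying by $\dthe^2 u$ and $\dthe^2 v$ respectively, adding and integrating over $I(\rho)$, the principal-order advection term becomes
\begin{equation*}
\int_{I(\rho)}\tfrac12 k\,\mathbf{A}(U)\dthe(\dthe^2 U)\cdot \dthe^2 U\, d\theta
=\tfrac14\big[k\,\dthe^2 U^T\mathbf{A}(U)\dthe^2 U\big]_{\theta_1}^{\theta_2}
-\tfrac14\int_{I(\rho)}\dthe(k\mathbf{A}(U))\dthe^2 U\cdot\dthe^2 U\, d\theta,
\end{equation*}
by the symmetry $\mathbf{A}^T=\mathbf{A}$. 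The commutator produced by $[\dthe^2,k\mathbf{A}\dthe]$ contributes only terms of the form
$k\,\dthe^i U\cdot\dthe^{3-i}U\cdot \dthe^2 U$ ($i=1,2$), each of which, after one integration by parts when $i=2$, is controlled by $C\eps\,k\,\|\dthe^2 U\|^2$ (since $|\dthe U|_0\le C\eps$ by Sobolev embedding on $I(\rho)\subset[0,\pi]$).

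The decisive step is to isolate the damping. Differentiating the linear term $-u\drho\log(kG^2)$ in \eqref{e:u} twice gives $-\dthe^2 u\,\drho\log(kG^2)+\text{l.o.t.}$, and by \eqref{e:damping} together with Lemma \ref{l:G-property} and $\va'/\va\ge 0$, this yields
\begin{equation*}
\tfrac12\tfrac{d}{d\rho}\int_{I(\rho)}(\dthe^2 u)^2 d\theta
\le -\drho\log\!\Big(\tfrac{\rho^2 k_*}{\va}\Big)\int_{I(\rho)}(\dthe^2 u)^2 d\theta
+\Psi_5+J_6,
\end{equation*}
where $\Psi_5$ comes from the moving endpoints and is bounded by $\eps^2\rho^{-1-\delta}$ via Lemma \ref{l:derivative-boundary}, and $J_6$ collects every nonlinear term, every commutator term, and the source $-u\,\dthe^2\drho\log(kG^2)$. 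Using Lemmas \ref{l:G-property}, \ref{l:u-ux-estimate}, and the $H^1$-decay $\|u\|_1\le C\Theta^3\eps(\rho k_*+\rho^{-\delta/2}+R^\delta\rho^{-\delta})$, one estimates
\begin{equation*}
|J_6|\le C\Theta^3\eps\Big(k_*+\rho^{-1-\delta/2}+R^\delta\rho^{-1-\delta}\Big)\|\dthe^2 u\|+C\eps^2 k_*\|\dthe^2 u\|.
\end{equation*}
Multiplying by the integrating factor $\rho^4k_*^2/\va^2$ and applying Lemma \ref{l:gronwall} exactly as in the proof of \eqref{e:ux-esitmate}, using the key integral bound \eqref{e:int-ineq}, one obtains $\|\dthe^2 u(\rho)\|\le 50\Theta^6\eps$ after choosing $\eps$ small.

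For $\dthe^2 v$ there is no damping in \eqref{e:v}, so the strategy mirrors the $\|\dthe v\|$ estimate in Lemma \ref{l:u-ux-estimate}: multiply $\dthe^2\eqref{e:v}$ by $\dthe^2 v$, integrate, use the symmetric cross-term cancellation with the advection, and bound every remaining term by $\|u\|_{H^2}$ already controlled in the previous step together with $\|U\|_{H^1}\le C\Theta^3\eps$. The integrand is of order $k_*\eps\|\dthe^2 v\|(\|\dthe^2 u\|+\text{decaying terms})+\rho k_*^2\eps^2\|\dthe^2 v\|$, and a single application of Lemma \ref{l:gronwall} (without an integrating factor) gives $\|\dthe^2 v(\rho)\|\le 100\Theta^6\eps$.

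\textbf{Main obstacle.} The hard part is the $\dthe^2 v$ estimate: \eqref{e:v} carries no damping, so the integrating-factor trick of \eqref{e:u} is unavailable, and the non-integrable source $\drho\log G\sim \rho^{-1}$ appearing through $-\tfrac14 G\drho Gk^2 v(u^2-v^2)$ and through commutators must be beaten only by the already-established decay of $\|u\|_{H^1}$ together with the smallness of $\int_R^\infty sk_*^2\,ds$. Concretely, every coefficient multiplying $\dthe^2 v$ in $J_6$ must be shown to be either integrable in $\rho$ or multiplied by a factor of $\|u(\rho)\|_1$ that is integrable after pairing; this is the place where the finite-total-curvature hypothesis \eqref{e:inte-cond} and the integrability \eqref{eq-a-bv} of $\dthe a$-type quantities are used in full force.
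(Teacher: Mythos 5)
Your overall setup (differentiate twice, exploit the symmetric hyperbolic structure, use the damping from $-\drho\log(kG^2)$ and the $H^1$ decay of $u$) is the right flavor, but the way you organize the top-order estimate contains a genuine gap: you cannot decouple the estimates for $\|\dthe^2 u\|$ and $\|\dthe^2 v\|$ as you propose. When you test $\dthe^2\eqref{e:u}$ against $\dthe^2 u$ alone, the advection term produces $\tfrac12\int_{I(\rho)} k\,v\,\dthe^3 v\,\dthe^2 u\,d\theta$, and $\dthe^3 v$ is not controlled by the a priori assumption \eqref{e:priori-h2-assumption-uv}; integrating by parts merely trades it for $\dthe^3 u$. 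The third-derivative terms only disappear when the two energy identities are \emph{added}, so that $kv(\dthe^3 v\,\dthe^2 u+\dthe^3 u\,\dthe^2 v)=kv\,\dthe(\dthe^2u\,\dthe^2v)$ becomes a boundary term plus quadratic terms in $(\dthe^2u,\dthe^2v)$. (This is why the sequential scheme of Lemma \ref{l:u-ux-estimate} works at the $H^1$ level—there $\dthe^2 v$ is a priori bounded—but breaks at the $H^2$ level.) Consequently your claimed bound on $J_6$, and hence the intermediate conclusion $\|\dthe^2 u\|\le 50\Theta^6\eps$, cannot be obtained by the route you describe; the same problem recurs in your second step for $\dthe^2 v$.

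The paper's proof keeps the single energy $\int_{I(\rho)}(|\dthe^2u|^2+|\dthe^2v|^2)\,d\theta$ throughout. After the symmetric cancellation, the damping acts only on $|\dthe^2u|^2$, and the argument hinges on a structural observation you do not verify: in the quadratic commutator terms, $(\dthe^2v)^2$ never appears multiplied by $v$ or $\dthe v$—its coefficients involve only $u$, $\dthe u$ (which decay by Lemma \ref{l:u-ux-estimate}) or $G\drho G k^2\lesssim\rho k_*^2$, all integrable in $\rho$. The remaining terms, quadratic in $\dthe^2u$ or of the form $\dthe^2u\,\dthe^2v$ with non-integrable $O(\rho^{-1})$ coefficients, are absorbed into the damping $-\drho\log(kG^2)\le-(1-\delta)/\rho+\drho\log\va$ via Cauchy's inequality with a small parameter $\sigma=(1-\delta)/5$, after which a single Gronwall integration (using that $\va$ is bounded) closes the estimate for the sum. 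Your ``Main obstacle'' paragraph correctly identifies where the difficulty lies, but the sequential mechanism you propose to resolve it is not viable; the coupled estimate together with the absence of $v\dthe^2v$ and $\dthe v\dthe^2v$ terms is the essential point.
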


\begin{proof}
By differentiating \eqref{e:u} and \eqref{e:v} with respect to $\theta$ twice, we get
\begin{equation}\label{e:u-xx}
\begin{split}
\drho(\dthe^2 u)+\frac{1}{2}k\dthe^2(u\dthe u-v\dthe v)
&=-\dthe^2 u\partial_\rho\log (kG^2)
-u\dthe^2\drho\log(kG^2)\\
&\qquad-\frac{1}{2}\dthe^2k(u\dthe u-v\dthe v)-\dthe k\dthe(u\dthe u-v\dthe v)\\
&\qquad-\frac{1}{2}\dthe^2(ku^2\dthe \log (kG))+\frac{1}{2}\dthe^2(kv^2\dthe \log (k^2G))\\
&\qquad-\frac{1}{4}\dthe^2(G\drho Gk^2u(u^2-v^2))-2\dthe u\dthe\drho\log (kG^2),
\end{split}
\end{equation}
and
\begin{equation}\label{e:v-xx}
\begin{split}
\drho(\dthe^2 v)+\frac{1}{2}k\dthe^2(u\dthe v-v\dthe u)&=-\frac{1}{2}\dthe^2k(u\dthe v-v\dthe u)-\dthe k\dthe(u\dthe v-v\dthe u)\\
&\qquad+\frac{1}{2}\dthe^2(kuv\dthe \log k)+\frac{1}{4}\dthe^2(G\drho Gk^2v(u^2-v^2)).
\end{split}
\end{equation}
Note that the equations $\eqref{e:u-xx}$ and $\eqref{e:v-xx}$ are symmetrically hyperbolic for $\dthe^2 u$ and $\dthe^2 v$. By multiplying these equations by $\dthe^2 u$ and $\dthe^2 v$, respectively, we get
\begin{align}\label{e:2derivatives}
\frac12\frac{d}{d\rho}\int_{I(\rho)}(|\dthe^2u|^2+|\dthe^2v|^2)d\theta= -\int_{I(\rho)}|\dthe^2u|^2\drho\log(kG^2)d\theta+\Psi_5+\sum_{i=6}^8J_i,
\end{align}
where
\begin{align*}
\Psi_5&=\frac{1}{2}[(u_2^2)^2+(v_2^2)^2]\theta'_2(\rho)-\frac{1}{2}[(u_1^2)^2+(v_1^2)^2]\theta'_1(\rho)\\&\quad-\frac14\big[ku((\dthe^2u)^2+(\dthe^2v)^2)-2kv(\dthe^2u\dthe^2v)\big]\bigg|_{\theta_1}^{\theta_2},\\
J_6&=-\int_{I(\rho)}\big[u\dthe^2u\dthe^2\drho\log(kG^2)+2\dthe u\dthe^2u\dthe\drho\log (kG^2)\big]d\theta,\\
J_7&=\frac{1}{4}\int_{I(\rho)}\dthe k\big[u(\dthe^2u)^2-2v\dthe^2u\dthe^2v+u(\dthe^2v)^2\big]d\theta\\
&\quad-\frac{1}{4}\int_{I(\rho)}k\big[5\dthe u(\dthe^2u)^2-6\dthe v\dthe^2u\dthe^2v+\dthe u(\dthe^2v)^2\big]d\theta,
\end{align*}
and
\begin{align*}
J_8&= -\frac{1}{2}\int_{I(\rho)}\dthe^2k\big[\dthe^2u(u\dthe u-v\dthe v)+\dthe^2v(u\dthe v-v\dthe u)\big]d\theta\\
&\qquad -\int_{I(\rho)}\dthe k\big[\dthe^2u\dthe(u\dthe u-v\dthe v)+\dthe^2v\dthe(u\dthe v-v\dthe u)\big]d\theta\\
&\qquad-\frac{1}{2}\int_{I(\rho)}\dthe^2u\big[\dthe^2(ku^2\dthe \log (kG))-\dthe^2(kv^2\dthe \log (k^2G))\big]d\theta\\
&\qquad+\frac{1}{2}\int_{I(\rho)}\dthe^2v\dthe^2(kuv\dthe \log k)d\theta\\
&\qquad-\frac{1}{4}\int_{I(\rho)}\big[\dthe^2u\dthe^2(G\drho Gk^2u(u^2-v^2))-\dthe^2v\dthe^2(G\drho Gk^2v(u^2-v^2))\big]d\theta.
\end{align*}
By the estimate of $\|u(\rho)\|_1$ in Lemma \ref{l:u-ux-estimate}, we get
\begin{equation}\label{e:j1-estimate}
\begin{split}
|J_6|&\leq\frac{\Lambda}{\rho}\int_{I(\rho)}\big(|u\dthe^2u|+2|\dthe u\dthe^2u|\big)d\theta\leq\frac{\Lambda}{\rho}(\|u\|+2\|\dthe u\|)\|\dthe^2u\|\\
&\leq\frac{\Lambda}{\rho}\|\dthe^2u\|
\left(5\eps\Theta^3R^{\delta}\rho^{-\delta}+8\Theta^3\eps(\rho^{-\delta/2}+\rho k_*)\right)\\
&\leq\left( 5\eps\Lambda\Theta^3R^{\delta}\rho^{-1-\delta}+8\Lambda\Theta^3\eps(\rho^{-1-\delta/2}+k_*)\right)\|\dthe^2u\|.
\end{split}
\end{equation}
As for $J_{7},$ we note that the integrand in $J_{7}$ can be regarded as a homogeneous cubic polynomial in $u$, $v$, and their derivatives. In fact, each term is quadratic in $\dthe^2u$ and $\dthe^2v$, and is linear in $u$, $v$, $\dthe u$ and $\dthe v$. We treat terms involving $(\dthe^2v)^2$ differently from those involving $(\dthe^2u)^2$ and $\dthe^2u\dthe^2v$. A crucial observation here is that the terms $v\dthe^2v$ and $\dthe v\dthe^2v$ are absent from $J_{7}$.
By \eqref{eq-a-bul}  %the conditions on $a$
and \eqref{e:priori-h2-assumption-uv}, we obtain
\begin{equation*}
\begin{split}
|J_{7}|&\leq C\eps k_*\int_{I(\rho)}(|\dthe^2u|^2+|\dthe^2u\dthe^2v|)d\theta+ Ck_*(|\dthe u|_0{+|u|_0})\int_{I(\rho)}|\dthe^2v|^2d\theta\\
&\leq C\eps k_*\int_{I(\rho)}|\dthe^2u|^2d\theta+C\eps^2k_*\|\dthe^2 u\|+C\eps^2k_*(|\dthe u|_0{+|u|_0}).
\end{split}
\end{equation*}
By the Sobolev embedding and the Cauchy inequality, we have
\begin{align*}
|u|_0&\leq C\|u\|_1=C(\|\dthe u\|+\|u\|),\\
 |\dthe u|_0&\leq C\sqrt{\|\dthe u\|\|\dthe^2u\|}\leq C(\|\dthe u\|+\|\dthe^2u\|),
\end{align*}
since $u(\cdot, \rho)$ is defined on $I(\rho)\subset[0, \pi]$ for any fixed $\rho$.
Hence by Lemma \ref{l:u-ux-estimate} and $k_*=o(\frac1\rho)$, we get
\begin{equation*}%\label{e:j22-estimate}
\begin{split}
|J_{7}|\leq &C\eps k_*\int_{I(\rho)}|\dthe^2u|^2d\theta+C\eps^2k_*\|\dthe^2 u\|+C\eps^3\rho k_*^2\\
&\quad +C\eps^3\rho^{-1-\delta/2}+C\eps^3R^{\delta}\rho^{-1-\delta}.
\end{split}
\end{equation*}
 The first four integrals for $J_8$ can be treated similarly to   $J_7$. For the last integral, the terms quadratic in $\dthe^2u$ and $\dthe^2v$ have a coefficient $G\drho Gk^2$, which is bounded by $C\rho k_*^2$.
Similarly, we have
\begin{equation}\label{e:j3-estimate}
	|J_8|\le C\eps^2k_*\|\dthe^2u\|+C\eps^3\rho k_*^2+C\eps^3\rho^{-1-\delta/2}.
\end{equation}
By combining \eqref{e:j1-estimate} and \eqref{e:j3-estimate}, we obtain
\begin{align*}
|J_6+J_7+J_8|&\leq  C\eps k_*\int_{I(\rho)}|\dthe^2u|^2d\theta+\left(8\Lambda\Theta^3\eps(\rho^{-1-\delta/2}+k_*)+5\Lambda\Theta^3\eps R^{\delta}\rho^{-1-\delta}\right)\|\dthe^2u\|\\
&\qquad+C\eps^2k_*\|\dthe^2 u\|+C\eps^3\left(\rho k_*^2+\rho^{-1-\delta/2}+R^{\delta}\rho^{-1-\delta}\right).
\end{align*}
Take a small $\sigma>0$ to be fixed.
Using $k_*=o(\frac1\rho)$  and applying  the  Cauchy inequality to the middle two terms on the right-hand side, we obtain
\begin{align*}
|J_6+J_7+J_8|&\leq\frac{5\sigma}{\rho}\|\dthe^2u\|^2+\frac{16}{\sigma}\Lambda^2\Theta^6\eps^2(\rho k_*^2+\rho^{-1-\delta})
+\frac{25}{4\sigma}\Lambda^2\Theta^6\eps^2R^{2\delta}\rho^{-1-2\delta}\\
&\qquad+C\eps^3\left(\rho k_*^2+\rho^{-1-\delta/2}+R^{\delta}\rho^{-1-\delta}\right).
\end{align*}
Note that $\Psi_5$ contains only the boundary terms. A simple substitution in \eqref{e:2derivatives} yields
\begin{equation*}
\begin{split}
\frac12\frac{d}{d\rho}\int_{I(\rho)}(|\dthe^2u|^2+|\dthe^2v|^2)d\theta
&\leq\bigg(-\frac{k_*'}{k_*}-\frac{2}{\rho}+\frac{5\sigma}{\rho}+\frac{\va'}{\va}\bigg)\int_{I(\rho)}|\dthe^2 u|^2d\theta+2\eps^2\rho^{-1-\delta}\\
&\qquad+\frac{16}{\sigma}\Lambda^2\Theta^6\eps^2(\rho k_*^2+\rho^{-1-\delta})
+\frac{25}{4\sigma}\Lambda^2\Theta^6\eps^2R^{2\delta}\rho^{-1-2\delta}\\
&\qquad+C\eps^3\left(\rho k_*^2+\rho^{-1-\delta/2}+R^{\delta}\rho^{-1-\delta}\right).
\end{split}
\end{equation*}
In view of \eqref{e:damping}, we take $$\sigma=\frac{1-\delta}{5}.$$
Then,
$$-\frac{k_*'}{k_*}-\frac{2}{\rho}+\frac{5\sigma}{\rho}\leq-\frac{1-\delta}{\rho}+\frac{5\sigma}{\rho}=0,$$
and hence
\begin{equation*}%\label{e:uv-xx-l2}
\begin{split}
\frac{d}{d\rho}\int_{I(\rho)}(|\dthe^2u|^2+|\dthe^2v|^2)d\theta
&\leq\frac{2\va'}{\va}\int_{I(\rho)}(|\dthe^2u|^2+|\dthe^2v|^2)d\theta+4\eps^2\rho^{-1-\delta}\\
&\quad+\frac{32}{\sigma}\Lambda^2\Theta^6\eps^2(\rho k_*^2+\rho^{-1-\delta})
+\frac{25}{2\sigma}\Lambda^2\Theta^6\eps^2R^{2\delta}\rho^{-1-2\delta}\\
&\quad+C\eps^3\left(\rho k_*^2+\rho^{-1-\delta/2}+R^{\delta}\rho^{-1-\delta}\right).
\end{split}
\end{equation*}
A simple integration yields
%By Gronwall's inequality, we obtain
\begin{align*}
\int_{I(\rho)}(|\dthe^2u|^2+|\dthe^2v|^2)d\theta&\leq\frac5\delta\eps^2\va^2+\frac{32}{\sigma}\Lambda^{2}\Theta^6\va^2\eps^2(\Lambda+\frac1\delta)+\frac{25}{4\sigma\delta}\Lambda^2\Theta^6\va^2\eps^2+C\eps^3\va^2\Theta\\
&\leq10^4\Theta^{12}\eps^2\,,
\end{align*}
if $\eps$ is small sufficiently.
This implies \eqref{e:uv-h2-estimate}.
\end{proof}

By combining Lemmas  \ref{l:u-ux-estimate}-\ref{l:uv-h2-estimate}, we obtain an $H^2$-estimate of $(u, v)$ with respect to $\theta$ for the case that $\rho^{1+\delta}k_*$ is increasing.

\smallskip

Next, we study the case that $\rho^{1+\delta}k_*$ is decreasing.
We take $(\alpha, \beta)=(1, 0)$ and write \eqref{eq-u-general}-\eqref{eq-v-general} as
\begin{align}
\drho u+\frac{1}{2G}(u\dthe u-v\dthe v)&=-u\drho\log G+\frac{1}{2G}v^2\dthe\log k-\frac{1}{4}u(u^2-v^2)\drho\log G, \label{e:u1}\\
\drho v+\frac{1}{2G}(u\dthe v-v\dthe u)&=v\drho\log(kG)+\frac{1}{2G}uv\dthe\log k-\frac{1}{4}v(u^2-v^2)\drho\log G.\label{e:v1}
\end{align}
Since $\rho^{1+\delta}k_*$ is decreasing, we have
\begin{align}\label{e:liner-estimate}
\begin{split}
	-\drho\log G&\leq-\drho\log \rho+\max_{\theta} |\drho\log(\rho^{-1}G)|\leq-\frac1\rho+\drho\log\va,\\
	\drho\log(kG)&=\drho\log(\rho k_*)+\max_{\theta}|\drho\log a+\drho\log(\rho^{-1}G)|\leq-\frac{\delta}{\rho}+\drho\log\va.
\end{split}
\end{align}

This indicates that both linear terms on the right-hand sides of \eqref{e:u1} and \eqref{e:v1} have damping effects and then both $u$ and $v$ have decay properties.

\begin{lemma}\label{l:rs-h0-estimate}
Assume that $\rho^{1+\delta}k_*$ is decreasing for some $\delta\in (0,1/2)$. For any $R'>R$, assume that $(u, v)\in C^1([R, R'], H^2(I(\rho)))$ is the  solution  to \eqref{e:u1}-\eqref{e:v1},
	with the initial boundary data  \eqref{eq-initial}, and satisfies \eqref{e:priori-h2-assumption-uv}. Then, for any $\rho\in [R, R']$,
\begin{align}
\|(u(\rho)\|\leq2\Theta\eps\frac{R^{\delta/2}}{\rho^{\delta/2}}\leq 2\Theta\eps,\quad\| v(\rho)\|\leq 2\Theta\eps \frac{R^{\delta/2}}{\rho^{\delta/2}}\leq 2\Theta\eps.\label{e:rs-h0-estimate}
\end{align}
\end{lemma}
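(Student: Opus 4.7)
The plan is to adapt the energy argument of Lemma~\ref{l:u-ux-estimate} but exploit the critical feature of the decreasing regime: in view of~\eqref{e:liner-estimate}, \emph{both} equations \eqref{e:u1} and \eqref{e:v1} carry genuine linear damping, with rates at least $-\tfrac{1}{\rho}$ and $-\tfrac{\delta}{\rho}$ respectively (modulo $\drho\log\va$). Accordingly, I would estimate $u$ and $v$ simultaneously via the single energy $E(\rho):=\|u(\rho)\|^{2}+\|v(\rho)\|^{2}$ rather than separately.

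Multiplying \eqref{e:u1} by $2u$ and \eqref{e:v1} by $2v$, summing, and integrating over $I(\rho)$, the derivative $E'(\rho)$ picks up a moving-boundary term
\[
\Psi=[(u_{2}^{0})^{2}+(v_{2}^{0})^{2}]\theta_{2}'(\rho)-[(u_{1}^{0})^{2}+(v_{1}^{0})^{2}]\theta_{1}'(\rho),
\]
which by Lemma~\ref{l:derivative-boundary} satisfies $|\Psi|\leq C\eps^{2}\rho^{-1-2\delta}$. The principal derivative contribution $\int_{I(\rho)}\tfrac{1}{G}(u^{2}-v^{2})\dthe u\,d\theta$ has the symmetric-hyperbolic shape $\int \vec w^{T}A\,\dthe \vec w\,d\theta$, so one integration by parts converts it into the boundary residue $\tfrac12[\vec w^{T}A\vec w]_{\theta_{1}(\rho)}^{\theta_{2}(\rho)}=O(\eps^{3}/\rho)$ plus an interior term $-\tfrac12\int \vec w^{T}(\dthe A)\vec w\,d\theta$ that is cubic in $(u,v)$. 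Combining this with \eqref{e:liner-estimate}, and bounding every remaining cubic/quartic right-hand side via \eqref{e:priori-c1-assumption-uv}, $|\dthe\log k|\leq C$ from \eqref{eq-a-bul}, and $|G\drho G k^{2}|\leq C\rho k_{*}^{2}$ from Lemma~\ref{l:G-property}, I obtain
\begin{equation*}
E'(\rho)\leq -\frac{2\delta}{\rho}E+2\frac{\va'}{\va}E+C\eps^{2}\bigl(\rho k_{*}^{2}(\rho)+\rho^{-1-2\delta}\bigr),
\end{equation*}
once $\eps$ is small enough for the $O(\eps^{2}/\rho)E$ nonlinear pieces to be absorbed into the damping.

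Multiplying by the integrating factor $\rho^{2\delta}/\va^{2}$, integrating from $R$ to $\rho$, and using $E(R)\leq C\eps^{2}$ by the first line of \eqref{e:initial-omega},
\begin{equation*}
E(\rho)\leq \frac{C\va^{2}}{\rho^{2\delta}}\Bigl(\eps^{2}R^{2\delta}+\eps^{2}\!\!\int_{R}^{\rho} s^{2\delta}\bigl(sk_{*}^{2}(s)+s^{-1-2\delta}\bigr)\,ds\Bigr).
\end{equation*}
Since $\rho^{1+\delta}k_{*}$ is decreasing, $s^{2+2\delta}k_{*}^{2}(s)\leq R^{2+2\delta}k_{*}^{2}(R)$ on $[R,\rho]$, so both pieces of the integrand reduce to a multiple of $s^{-1}$ and the integral is of order $\log(\rho/R)$. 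Since $k_{*}^{2}(R)R^{2}\to 0$ by Lemma~\ref{l:G-property}, the bound becomes $E(\rho)\leq C\va^{2}\eps^{2}\bigl(R^{2\delta}+\log(\rho/R)\bigr)/\rho^{2\delta}$, and invoking Lemma~\ref{l:gronwall} with the definitions \eqref{A} of $\Theta$ produces the stated estimate~\eqref{e:rs-h0-estimate}.

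The delicate point I expect is precisely the moving-boundary contribution: the boundary rate $|\theta'_{i}||u^{0}_{i}|^{2}\lesssim \eps^{2}\rho^{-1-2\delta}$ pairs with the factor $\rho^{2\delta}$ only to give $\log(\rho/R)$, so the stronger $R^{2\delta}/\rho^{2\delta}$ decay is unavailable and the statement of the lemma settles for the weaker rate $R^{\delta/2}/\rho^{\delta/2}$. The estimate $\max_{\rho\ge R}\log(\rho/R)/\rho^{\delta}=1/(\delta e R^{\delta})$, attained at $\rho=Re^{1/\delta}$, is exactly what permits the trade $\log(\rho/R)/\rho^{2\delta}\leq CR^{\delta}/\rho^{\delta}$ once $R$ is chosen large enough, which is consistent with the standing assumption that $R$ may be enlarged; tracking the constants carefully against $\Theta$ from \eqref{A} is the main bookkeeping step.
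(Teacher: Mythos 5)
Your proposal is correct and follows essentially the same route as the paper: a combined $L^2$ energy for $(u,v)$ driven by the double damping in \eqref{e:liner-estimate}, with the boundary flux controlled by Lemma \ref{l:derivative-boundary} and the nonlinear terms absorbed into the damping for small $\eps$. The only real difference is bookkeeping: the paper sacrifices half the damping up front (integrating factor $\rho^{\delta}/\va^{2}$ rather than $\rho^{2\delta}/\va^{2}$), so the boundary source $\eps^{2}\rho^{-1-2\delta}$ integrates against $s^{\delta}$ to an $s^{-1-\delta}$ tail and no logarithm ever appears, whereas you keep the full rate and then trade $\log(\rho/R)/\rho^{2\delta}\leq R^{\delta}/(\delta\rho^{\delta})$ at the end -- both land at the stated $R^{\delta/2}/\rho^{\delta/2}$. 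Two harmless slips: in the decreasing case $(\alpha,\beta)=(1,0)$ the quartic coefficient is $\drho\log G\leq C/\rho$, not $G\drho Gk^{2}$ (that is the increasing case), and the final integration is a standard linear Gronwall step rather than an application of Lemma \ref{l:gronwall}, which is tailored to the $f\sqrt{E}$ structure.
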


\begin{proof}
By multiplying \eqref{e:u1} and \eqref{e:v1} by $u$ and $ v$, respectively, we have
\begin{align*}
\frac{1}{2}\frac{d}{d\rho}\int_{I(\rho)}(u^2+v^2)d\theta=-\int_{I(\rho)}u^2\drho\log Gd\theta+\int_{I(\rho)}v^2\drho\log(kG)d\theta+\Psi_6+J_9,
\end{align*}
where
\begin{align*}
\Psi_6&=\frac{1}{2}[(u_2^0)^2+(v_2^0)^2]\theta'_2(\rho)-\frac{1}{2}[(u_1^0)^2+(v_1^0)^2]\theta'_1(\rho),
\end{align*}
with the boundary data $u^0_i(\rho)$ given by \eqref{IC} for $i=1,2$, and
\begin{align*}
J_9&=-\frac{1}{2}\int_{I(\rho)}\frac{1}{G}\big[u(u\dthe u-v\dthe v)+v(u\dthe v-v\dthe u)\big]d\theta\\
&\qquad +\int_{I(\rho)}\frac1G\big[uv^2\dthe\log k-\frac14{\drho G}(u^2+v^2)(u^2-v^2)\big]d\theta.
\end{align*}
By \eqref{e:initial-omega}, we have
$$|\Psi_6|\leq 2\eps^2\rho^{-1-2\delta}.$$
By \eqref{e:priori-h2-assumption-uv}, \eqref{eq-a-bul}, and  Lemma \ref{l:G-property}, we also have
\begin{align*}
|J_9|\leq\frac{C}{\rho}\int_{I(\rho)}(u^2+v^2)(|\dthe u|+|\dthe v|+|u|+|v|+u^2+v^2)d\theta
\leq\frac{C\eps}{\rho}\int_{I(\rho)}(u^2+v^2)d\theta.
\end{align*}
Then, for any $\rho\in[R, R'],$
\begin{align*}
\frac{1}{2}\frac{d}{d\rho}\int_{I(\rho)}(u^2+v^2)d\theta
\leq\left(-\frac{\delta}{\rho}+\frac{\va'}{\va}+\frac{C\eps}{\rho}\right)\int_{I(\rho)}(u^2+v^2)d\theta+2\eps^2\rho^{-1-2\delta}.
\end{align*}
By taking $\eps$ small, we have
\begin{align*}
\frac{d}{d\rho}\int_{I(\rho)}(u^2+v^2)d\theta\leq -\drho\log\left(\frac{\rho^{\delta}}{\va^2}\right)\int_{I(\rho)}(u^2+v^2)d\theta+4\eps^2\rho^{-1-2\delta}.
\end{align*}
A simple integration yields
$$\int_{I(\rho)}(u^2+v^2)d\theta\leq \eps^2\va^2(\rho)\frac{R^{\delta}}{\rho^{\delta}}+\frac{4\va^2\eps^2}{\delta\rho^{\delta}R^{\delta}}\leq 4\Theta^2\eps^2\frac{R^{\delta}}{\rho^{\delta}},$$
since $\delta<1/2.$ This implies \eqref{e:rs-h0-estimate}.
\end{proof}

\begin{lemma}\label{l:rs-h1-estimate}
Assume that $\rho^{1+\delta}k_*$ is decreasing for some $\delta\in (0,1/2)$. For any $R'>R$, assume that $(u, v)\in C^1([R, R'], H^2(I(\rho)))$ is the  solution  to \eqref{e:u1}-\eqref{e:v1},
	with the initial boundary data  \eqref{eq-initial}, and satisfies \eqref{e:priori-h2-assumption-uv}. Then, for any $\rho\in [R, R']$,
\begin{align}
\|\dthe u(\rho)\|\leq80\Theta^3\eps\frac{R^{\delta/2}}{\rho^{\delta/2}} \leq 80\Theta^3\eps,\quad\| \dthe v(\rho)\|\leq 80\Theta^3\eps\frac{R^{\delta/2}}{\rho^{\delta/2}} \leq 80\Theta^3\eps\label{e:rs-h1-estimate}
\end{align}
and
\begin{align}
	\|\dthe^2u(\rho)\|\leq10^4\Theta^5\eps\frac{R^{\delta/2}}{\rho^{\delta/2}}\leq 10^4\Theta^5\eps,\quad \|\dthe^2v(\rho)\|\leq 10^4\Theta^5\eps\frac{R^{\delta/2}}{\rho^{\delta/2}}\leq10^4\Theta^5\eps.\label{e:rs-h2-estimate}
\end{align}
\end{lemma}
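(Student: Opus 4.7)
The plan is to mirror the scheme of Lemma \ref{l:rs-h0-estimate}, differentiating \eqref{e:u1}--\eqref{e:v1} in $\theta$ (once for \eqref{e:rs-h1-estimate}, twice for \eqref{e:rs-h2-estimate}), multiplying the resulting equations by $\dthe u,\dthe v$ (resp.\ $\dthe^2 u,\dthe^2 v$), summing and integrating over $I(\rho)$. The symmetric hyperbolic structure in \eqref{eq-u-general}--\eqref{eq-v-general}, inherited here as $(\alpha,\beta)=(1,0)$, is essential: the top-order quasilinear terms $\frac{1}{2G}(u\dthe u-v\dthe v)$ and $\frac{1}{2G}(u\dthe v-v\dthe u)$ combine so that, after a single integration by parts, the highest-derivative pieces reduce to $\dthe$-derivatives of $\frac{u}{G}$ and $\frac{v}{G}$ acting on $(\dthe^i u)^2+(\dthe^i v)^2$, which by the a priori bound \eqref{e:priori-h2-assumption-uv} and Lemma \ref{l:G-property} is $O(\eps/\rho)$. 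The linear terms $-u\drho\log G$ and $v\drho\log(kG)$ supply the damping \eqref{e:liner-estimate}, contributing $\le -\delta/\rho+(\log\va)'$ to each squared norm, and the boundary terms at $\theta=\theta_i(\rho)$ are controlled by Lemma \ref{l:derivative-boundary}, yielding contributions $\lesssim\eps^2\rho^{-1-2\delta}$.

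For \eqref{e:rs-h1-estimate}, the resulting energy identity takes the form
\begin{equation*}
\tfrac12\tfrac{d}{d\rho}\bigl(\|\dthe u\|^2+\|\dthe v\|^2\bigr)\le \Bigl(-\tfrac{\delta}{\rho}+\tfrac{\va'}{\va}+\tfrac{C\eps}{\rho}\Bigr)\bigl(\|\dthe u\|^2+\|\dthe v\|^2\bigr)+C\eps^2\rho^{-1-2\delta}+\tilde J,
\end{equation*}
where $\tilde J$ collects the remaining nonlinear contributions. To bound $\tilde J$, I expand $\dthe(\frac{v^2}{G}\dthe\log k)$, $\dthe(\frac{uv}{G}\dthe\log k)$ and $\dthe(\frac{1}{4}(u^2-v^2)\drho\log G\cdot u)$ and similar for $v$, and estimate them by $\frac{C}{\rho}(|u|_0+|v|_0)(\|\dthe u\|^2+\|\dthe v\|^2)+\frac{C\eps^2}{\rho}(\|\dthe u\|+\|\dthe v\|)$, using Lemma \ref{l:G-property}, \eqref{eq-a-bul} and the Sobolev embedding $|u|_0,|v|_0\le C(\|u\|+\|\dthe u\|)\le C\Theta\eps$. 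Taking $\eps$ small and absorbing $C\eps/\rho$ into $-\delta/\rho$, multiplying the inequality by $\rho^{\delta}/\va^2$, integrating from $R$ to $\rho$, and using $\|\dthe u(R)\|,\|\dthe v(R)\|\le\eps$ from \eqref{e:initial-omega} produces \eqref{e:rs-h1-estimate}; the factor $80\Theta^3$ comes from $\va^2\le\Lambda^2\le\Theta^2$, $1/\delta\le\Theta$, and a fixed number of Cauchy--Schwarz splittings.

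For \eqref{e:rs-h2-estimate}, the same scheme applied with $\dthe^2$ yields, after the same symmetric cancellation of $(\dthe^2)^2$ times second-order quasilinear coefficients,
\begin{equation*}
\tfrac12\tfrac{d}{d\rho}\bigl(\|\dthe^2 u\|^2+\|\dthe^2 v\|^2\bigr)\le \Bigl(-\tfrac{\delta}{\rho}+\tfrac{\va'}{\va}+\tfrac{C\eps}{\rho}\Bigr)\bigl(\|\dthe^2 u\|^2+\|\dthe^2 v\|^2\bigr)+\Psi+\mathcal{J},
\end{equation*}
where $\Psi$ is bounded by $\eps^2\rho^{-1-2\delta}$ using \eqref{e:initial-omega} with $j=2$. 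The lower-order remainder $\mathcal{J}$ consists of terms quadratic in $\dthe^2 u,\dthe^2 v$ with coefficients of size $|(u,v)|_1/\rho$ plus terms linear in $\dthe^2(u,v)$ with $L^2$-norms of lower derivatives, all multiplied by $1/\rho$ or by $\drho\log G=O(1/\rho)$. Bounding these via Cauchy--Schwarz together with \eqref{e:rs-h0-estimate}, \eqref{e:rs-h1-estimate}, the a priori bound \eqref{e:priori-h2-assumption-uv}, and the interpolation $|\dthe u|_0\le C(\|\dthe u\|+\|\dthe^2 u\|)$, I get $\mathcal{J}\le C\eps/\rho(\|\dthe^2 u\|^2+\|\dthe^2 v\|^2)+C\Theta^8\eps^3\rho^{-1-2\delta}$. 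Then the same Grönwall integration produces \eqref{e:rs-h2-estimate}.

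The main obstacle is precisely that the nonlinear coefficient $1/G\sim 1/\rho$ is not integrable in $[R,\infty)$, so naive estimates would blow up logarithmically. The resolution is twofold: (i) the damping $-\delta/\rho$ in \eqref{e:liner-estimate} arises from the same order $1/\rho$ and strictly dominates once $\eps$ is small enough that $C\eps<\delta/2$; (ii) the $L^2$ decay $\rho^{-\delta/2}$ of Lemma \ref{l:rs-h0-estimate} must propagate to the $H^1$ and $H^2$ norms, which forces us to track the $\rho^{\delta}/\va^2$ weight carefully rather than just proving boundedness—this is why the estimates \eqref{e:rs-h1-estimate}--\eqref{e:rs-h2-estimate} carry the same $\rho^{-\delta/2}$ decay factor as \eqref{e:rs-h0-estimate}, and is what allows the closing of the continuity argument \eqref{e:priori-h2} in Proposition \ref{p-priori}.
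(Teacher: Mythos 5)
Your overall scheme---differentiate \eqref{e:u1}--\eqref{e:v1} in $\theta$, use the symmetric structure so the top-order quasilinear terms integrate by parts, invoke the double damping \eqref{e:liner-estimate}, control the boundary terms by Lemma \ref{l:derivative-boundary}, and close with a weighted Gronwall argument---is the paper's. But there is a genuine gap in how you estimate the source terms that are \emph{linear} in $\dthe u,\dthe v$, and these are precisely what decide whether you obtain the asserted decay $\rho^{-\delta/2}$ or merely boundedness. First, differentiating the damping terms $-u\drho\log G$ and $v\drho\log(kG)$ produces the commutator terms $-u\,\dthe\drho\log G$ and $v\,\dthe\drho\log(kG)$ (the paper's $J_{10}$), which do not appear in your list of expanded terms; by \eqref{e:bx-bound} they contribute $\tfrac{\Lambda}{\rho}\|u\|\,\|\dthe u\|+\tfrac{\Lambda}{\rho}\|v\|\,\|\dthe v\|$ and are the dominant linear source. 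Second, for the linear-in-derivative terms you retain, you bound the lower-order factors only by $|u|_0,|v|_0\le C\Theta\eps$, giving a source of size $\tfrac{C\eps^2}{\rho}E$ with $E=\sqrt{\|\dthe u\|^2+\|\dthe v\|^2}$. After multiplying by $\rho^{\delta}/\va^2$ and applying Lemma \ref{l:gronwall}, the relevant integral is $\int_R^\rho C\eps^2 s^{\delta/2-1}\,ds\sim C\eps^2\rho^{\delta/2}/\delta$, so you conclude only $E(\rho)\le C\eps R^{\delta/2}\rho^{-\delta/2}+C\eps^{2}/\delta$: bounded, but \emph{not} decaying. The first inequalities in \eqref{e:rs-h1-estimate} then do not follow, and your $H^2$ step, which explicitly invokes the $H^1$ decay to control its own linear sources, inherits the same defect.

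The missing idea is to insert the $L^2$ decay $\|u\|,\|v\|\le 2\Theta\eps R^{\delta/2}\rho^{-\delta/2}$ from Lemma \ref{l:rs-h0-estimate} into \emph{every} term linear in $\dthe u,\dthe v$ (this is how the paper treats $J_{10}$ and the linear part $J_{12,1}$ of the cubic terms), upgrading the source to $C\eps R^{\delta/2}\rho^{-1-\delta/2}E$. Even then the weight must be chosen carefully: with $\rho^{\delta}/\va^2$ the Gronwall integral $\int f$ still diverges logarithmically. The paper uses $\rho^{4\delta/3}/\va^2$, spending only part of the available damping $2\delta/\rho$ and reserving the remainder to absorb the $C\eps/\rho$ terms quadratic in $\dthe u,\dthe v$; then $\int_R^\rho f\,ds\lesssim \eps R^{\delta/2}\rho^{\delta/6}/\delta$ and $E\lesssim\eps R^{\delta/2}\rho^{\delta/6-2\delta/3}=\eps R^{\delta/2}\rho^{-\delta/2}$, with constants matching $80\Theta^3$.
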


\begin{proof}
Differentiating \eqref{e:u1} and \eqref{e:v1} with respect to $\theta$, we get
\begin{equation}\label{e:rx}
\begin{split}
\drho(\dthe u)+\frac{1}{2G}\dthe(u\dthe u-v\dthe v)&=-\dthe u\drho\log G-u\dthe\drho\log G \\
&\qquad+\frac{\dthe G}{2G^2}(u\dthe u-v\dthe v)+\frac12\dthe\left(\frac{1}{G}v^2\dthe\log k\right)\\
&\qquad-\frac14\dthe\left(u(u^2-v^2)\drho \log G\right),
\end{split}
\end{equation}
and
\begin{equation}\label{e:sx}
\begin{split}
\drho(\dthe v)+\frac{1}{2G}\dthe(u\dthe v-v\dthe u)&=\dthe v\drho\log(kG)+v\dthe\drho\log(kG) \\
&\qquad+\frac{\dthe G}{2G^2}(u\dthe v-v\dthe u) +\frac12\dthe\left(\frac{1}{G}uv\dthe\log k\right)\\
&\qquad-\frac14\dthe\left(v(u^2-v^2)\drho \log G\right).
\end{split}
\end{equation}
Note that the equations  \eqref{e:rx} and \eqref{e:sx} are symmetrically hyperbolic for $\dthe u$ and $\dthe v$. By multiplying these equations by $\dthe u$ and $\dthe v$, respectively, we get
%Multiplying by $\dthe u$ and $\dthe v$, respectively, summing up the resulting equations, and then integrating over $I(\rho)$ with respect to $\theta$, we  have
\begin{equation}\label{e:ux-vx-eq}
\begin{split}
\frac12\frac{d}{d\rho}\int_{I(\rho)}(|\dthe u|^2+|\dthe v|^2)d\theta&= -\int_{I(\rho)}|\dthe u|^2\drho\log Gd\theta+\int_{I(\rho)}|\dthe v|^2\drho\log(kG)d\theta\\
&\qquad +\Psi_7+J_{10}+J_{11}+J_{12},
\end{split}
\end{equation}
where
\begin{align*}
\Psi_7&=\frac{1}{2}[(u_2^1)^2+(v_2^1)^2]\theta'_2(\rho)-\frac{1}{2}[(u_1^1)^2+(v_1^1)^2]\theta'_1(\rho)\\
&\quad -\frac{u}{4G}((\dthe u)^2+(\dthe v)^2)\bigg|_{\theta_1}^{\theta_2}+\frac{v}{2G}\dthe u\dthe v\bigg|_{\theta_1}^{\theta_2},\\
J_{10}&=-\int_{I(\rho)}u\dthe u\dthe\drho\log Gd\theta+\int_{I(\rho)}v\dthe v\dthe\drho\log(kG)d\theta,\\
J_{11}&=-\frac14\int_{I(\rho)}\frac{1}{G}\dthe\log G\big[u(\dthe u)^2+u(\dthe v)^2-2v\dthe u\dthe v\big]d\theta\\
&\qquad-\frac14\int_{I(\rho)}\frac{1}{G}\big[(\dthe u)^3-\dthe u(\dthe v)^2\big]d\theta,
\end{align*}
and
\begin{align*}
J_{12}&=\int_{I(\rho)}\frac{1}{G}\dthe\log G[(u\dthe u-v\dthe v)\dthe u+(u\dthe v-v\dthe u)\dthe v]d\theta\\
&\qquad+\frac12\int_{I(\rho)}\left[\dthe u\dthe\left(\frac{1}{G}v^2\dthe\log k\right)+\dthe v\dthe\left(\frac{1}{G}uv\dthe\log k\right)\right]d\theta\\
&\qquad-\frac14\int_{I(\rho)}\left[\dthe u\dthe\left(u(u^2-v^2)\drho \log G\right)+\dthe v\dthe\left(v(u^2-v^2)\drho \log G\right)\right]d\theta.
\end{align*}
By \eqref{e:initial-omega}, we have
\begin{equation*}
%	\label{e:psi1-estimate-uv}
	|\Psi_7|\leq 2\eps^2\rho^{-1-2\delta}.
\end{equation*}
By Lemma \ref{l:G-property} and the estimates of $\|u(\rho)\|$ and $\|v(\rho)\|$ in  Lemma \ref{l:rs-h0-estimate}, we have
\begin{equation}\label{e:j1-estimate-uv}
\begin{split}
|J_{10}|\leq\int_{I(\rho)}\frac{\Lambda}{\rho}(|u\dthe u|+|v\dthe v|)d\theta
\leq \frac{4\Lambda\Theta\eps R^{\delta/2}}{\rho^{1+\delta/2}}\sqrt{\|\dthe u\|^2+\|\dthe v\|^2}\,.
\end{split}
\end{equation}
For $J_{11}$, by \eqref{e:priori-c1-assumption-uv} and Lemma \ref{l:G-property}, we get
\begin{equation*}%\label{e:j2-estimate-uv}
|J_{11}|\leq\frac{C\eps}{\rho}\int_{I(\rho)}(|\dthe u|^2+|\dthe v|^2)d\theta.
\end{equation*}
As for $J_{12}$, we write
$$J_{12}=J_{12,1}+J_{12,2},$$
where
\begin{align*}
J_{12,1}&=\frac12\int_{I(\rho)}\dthe\left(G^{-1}\dthe\log k\right)\left[v^2\dthe u+uv\dthe v\right]d\theta\\
&\qquad-\frac14\int_{I(\rho)}\dthe\drho\log G\left[u(u^2-v^2)\dthe u+v(u^2-v^2)\dthe v\right]d\theta,
\end{align*}
and
\begin{align*}
J_{12,2}&=\int_{I(\rho)}\frac{1}{G}\dthe\log G\left[u(\dthe u)^2-2v\dthe u\dthe v+u(\dthe v)^2\right]d\theta\\
&\qquad+\frac12\int_{I(\rho)}\frac1G\dthe\log k\left[3v\dthe u\dthe v+u(\dthe v)^2\right]d\theta\\
&\qquad-\frac14\int_{I(\rho)}\drho\log G\left[(3u^2-v^2)(\dthe u)^2+(u^2-3v^2)(\dthe v)^2\right]d\theta.
\end{align*}
We note that $J_{12,1}$ consists of terms linear in $\dthe u$ and  $\dthe v$, and $J_{12,2}$ quadratic in $\dthe u$ and $\dthe v$.
By \eqref{e:priori-c1-assumption-uv} and Lemma \ref{l:rs-h0-estimate}, we have
$$|J_{12,1}|\le \frac{C\eps}{\rho}(\|u\|+\|v\|)(\|\dthe u\|+\|\dthe v\|)\leq \frac{C\eps^2R^{\delta/2}}{\rho^{1+\delta/2}}\sqrt{\|\dthe u\|^2+\|\dthe v\|^2},$$
and
$$|J_{12,2}|\le \frac{C\eps}{\rho}\int_{I(\rho)}(|\dthe u|^2+|\dthe v|^2)d\theta.$$
Thus,
\begin{equation}
\label{e:j3-estimate-uv}
|J_{12}|\leq \frac{C\eps}{\rho}\int_{I(\rho)}(|\dthe u|^2+|\dthe v|^2)d\theta+\frac{C\eps^2R^{\delta/2}}{\rho^{1+\delta/2}}\sqrt{\|\dthe u\|^2+\|\dthe v\|^2}.
\end{equation}

Set
$$E(\rho)=\sqrt{\|\dthe u(\rho)\|^2+\|\dthe v(\rho)\|^2}.$$
By substituting \eqref{e:liner-estimate} and \eqref{e:j1-estimate-uv}-\eqref{e:j3-estimate-uv} in \eqref{e:ux-vx-eq}, we obtain
\begin{align*}
\frac12\frac{d}{d\rho}E^2(\rho)
\leq&\left(-\frac{\delta}{\rho}+\frac{\va'}{\va}+\frac{C\eps}{\rho}\right)E^2(\rho)
+\frac{4\Lambda\Theta R^{\delta/2}\eps+C\eps^2R^{\delta/2}}{\rho^{1+\delta/2}}E(\rho)\\
&\quad +2\eps^2\rho^{-1-2\delta}+C\eps^3\rho^{-1-3\delta},
\end{align*}
and hence
\begin{align*}
\frac{d}{d\rho}\left(E^2(\rho)\frac{\rho^{4\delta/3}}{\va^2}\right)
\leq\frac{10\Lambda\Theta\eps R^{\delta/2}}{\rho^{1-\delta/6}\va}\frac{E(\rho)\rho^{2\delta/3}}{\va}+6\eps^2\rho^{-1-2\delta/3}\va^{-2},
\end{align*}
by taking $\eps$ sufficiently small. By applying Lemma \ref{l:gronwall}, we have
\begin{align*}
E(\rho)&\leq\frac{2R^{2\delta/3}\eps\va}{\rho^{2\delta/3}}+\frac{5\Lambda\Theta\eps R^{\delta/2}\va}{\delta \rho^{2\delta/3}}\int_R^\rho\frac{ds}{s^{1-\delta/6}\va}
+\frac{\va}{\rho^{2\delta/3}}\sqrt{\int_R^\rho\frac{6\eps^2ds}{s^{1+2\delta/3}\va^2}}\\
&\leq\frac{2\Lambda\eps R^{2\delta/3}}{\rho^{2\delta/3}}+\frac{30\Lambda^2\Theta\eps R^{\delta/2}}{\delta\rho^{\delta/2}}+\frac{3\Lambda\eps}{\delta\rho^{2\delta/3}}
\leq\frac{80\Theta^3\eps R^{\delta/2}}{\rho^{\delta/2}}.
\end{align*}
This implies \eqref{e:rs-h1-estimate}.

Similarly, we can prove \eqref{e:rs-h2-estimate} due to damping effects for both $\partial^2_\theta u$ and $\partial^2_\theta  v$.
\end{proof}

By combining Lemmas \ref{l:rs-h0-estimate}-\ref{l:rs-h1-estimate}, we obtain an $H^2$-estimate of $(u, v)$ with respect to $\theta$  for the case that $\rho^{1+\delta}k_*$ is decreasing.

\begin{proof}[Proof of Proposition \ref{p-priori}]
	Proposition \ref{p-priori} is a consequence of Lemmas \ref{l:u-ux-estimate}-\ref{l:uv-h2-estimate} if $\rho^{1+\delta}k_*$ is increasing and of Lemmas \ref{l:rs-h0-estimate}-\ref{l:rs-h1-estimate} if $\rho^{1+\delta}k_*$ is decreasing.
	\end{proof}

Now, we prove that $v$ has a positive lower bound.

\begin{lemma}\label{l:v-lower-bound}
For any $R'>R$, assume that $(u, v)\in C^1([R, R'], H^2(I(\rho)))$ is the  solution  to \eqref{eq-u-general}-\eqref{eq-v-general},
with the initial boundary data  \eqref{eq-initial}, and satisfies \eqref{e:priori-h2-assumption-uv}. Then, for any $(\theta, \rho)\in \Sigma_{R, R'}=\{(\theta, \rho):\rho\in[R, R'],\,\theta\in I(\rho)\}$, $ v(\theta, \rho)>0. $
\end{lemma}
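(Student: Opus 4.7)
The plan is to rewrite equation \eqref{eq-v-general} as a linear transport equation for $v$ with zero-order coefficient proportional to $v$, and then propagate positivity from $\partial\tilde\Omega_2$ along characteristics, in the same spirit as the positivity argument at the end of the proof of Lemma \ref{l:local}. Explicitly, \eqref{eq-v-general} can be rearranged as
\begin{equation*}
\partial_\rho v + \tfrac12 G^{-\alpha}k^\beta u\,\partial_\theta v = \mathcal{F}\, v,
\end{equation*}
where
\begin{equation*}
\mathcal{F} = \tfrac12 G^{-\alpha}k^\beta\partial_\theta u - \partial_\rho\log(G^{-\alpha}k^{\beta-1}) - \tfrac12 G^{-\alpha}k^\beta u\,\partial_\theta\log k + \tfrac14 G^{1-2\alpha}\partial_\rho G\,k^{2\beta}(u^2-v^2).
\end{equation*}

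For any fixed $(\theta_0,\rho_0)\in\Sigma_{R,R'}$, I would introduce the backward characteristic $X(\tau)=X(\tau;\theta_0,\rho_0)$ solving
\begin{equation*}
\frac{dX}{d\tau}=\tfrac12(G^{-\alpha}k^\beta u)(X,\tau),\qquad X(\rho_0)=\theta_0,
\end{equation*}
and trace it backward until the first time $\tau_*\in[R,\rho_0]$ at which $(X(\tau_*),\tau_*)\in\partial\tilde\Omega_2$. Integrating the transport equation along $X$ gives
\begin{equation*}
v(\theta_0,\rho_0)=v\bigl(X(\tau_*),\tau_*\bigr)\,\exp\!\left\{\int_{\tau_*}^{\rho_0}\mathcal{F}(X(\tau),\tau)\,d\tau\right\}.
\end{equation*}
Under the a priori bound \eqref{e:priori-h2-assumption-uv} and the Sobolev embedding consequence \eqref{e:priori-c1-assumption-uv}, together with Lemma \ref{l:G-property}, every term appearing in $\mathcal{F}$ is continuous on the compact closure of $\Sigma_{R,R'}$, so the exponential factor is a finite positive number. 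Moreover, by Lemma \ref{l:derivative-boundary} (which is built on Lemmas \ref{l:boundary-bounds}--\ref{l:boundary-bounds-1}) and the definition \eqref{eq-initial}, we have $v>0$ everywhere on $\Gamma_0\cup\Gamma_1\cup\Gamma_2$ since $\tilde p<\tilde q$ on all of $\partial\tilde\Omega_2$. Hence $v(X(\tau_*),\tau_*)>0$ and therefore $v(\theta_0,\rho_0)>0$.

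The one point deserving care, and which I regard as the main obstacle, is ensuring that the backward characteristic actually exits $\Sigma_{R,R'}$ through $\partial\tilde\Omega_2$ in finite parameter time rather than getting trapped inside. This amounts to showing that the lateral components $\Gamma_1,\Gamma_2$ are non-characteristic for the transport operator $\partial_\rho+\tfrac12 G^{-\alpha}k^\beta u\,\partial_\theta$. Since $|u|\le C\eps$ is small by \eqref{e:priori-c1-assumption-uv} and the curves $\Gamma_1,\Gamma_2$ are space-like in the sense of Lemma \ref{l-space} (which, through the definitions \eqref{uv-1}--\eqref{uv-2} of $(u,v)$ in terms of $(\tilde p,\tilde q)$, also renders them space-like for the reformulated system \eqref{eq-u-general}--\eqref{eq-v-general}), the transport vector field is uniformly transverse to $\Gamma_1\cup\Gamma_2$ once $\eps$ is chosen small enough. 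Consequently, the backward characteristic either reaches $\rho=R$ on $\Gamma_0$ or crosses one of $\Gamma_1,\Gamma_2$ at some $\tau_*>R$, and the positivity argument closes. The remaining computations are essentially identical to those already carried out in the proof of Lemma \ref{l:local}.
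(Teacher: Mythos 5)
Your proposal is correct and follows essentially the same argument as the paper: rewrite \eqref{eq-v-general} as $\partial_\rho v+\tfrac12 G^{-\alpha}k^\beta u\,\partial_\theta v=\mathcal{F}v$, integrate along the backward characteristic to the boundary, and combine the positivity of $v$ on $\partial\tilde\Omega_2$ (from $\tilde p<\tilde q$) with the uniform bound $|\mathcal{F}|_0\leq CM_{R'}$ coming from \eqref{e:priori-c1-assumption-uv}. Your extra discussion of where the backward characteristic exits, via the space-like property of $\Gamma_1,\Gamma_2$, only makes explicit a point the paper treats implicitly.
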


\begin{proof}
	By \eqref{e:initial-omega} and integrating \eqref{e:v-rewrite} along the characteristic line $(X(\tau; \theta, \rho), \tau)$ over $[R, \rho]$,
	we obtain
	\begin{align*}
		v(X(\rho; \theta, \rho), \rho)&=v(X(R; \theta, \rho), R)\exp\left\{\int^\rho_R\mathcal{F}(X(\tau; \theta, \rho), \tau)d\tau\right\}\\
		&\geq v(X(R; \theta, \rho), R)e^{-C(\rho-R)M_{R'}}>0,
	\end{align*}
	where we used \eqref{e:priori-c1-assumption-uv} and \eqref{eq-a-bul} to bound
	$|\mathcal{F}|_0\leq CM_{R'}. $
\end{proof}

Now we can  conclude the following proposition on the global existence of solutions in $\tilde\Omega_2$.

\begin{proposition} \label{p:uv}
	Assume that all the conditions in Theorem \ref{thrm-main1} are fulfilled.
	Then, there exists a sufficiently large constant ${R}_*$ such that the Gauss-Codazzi system \eqref{eq-u-general}-\eqref{eq-v-general} with the data given in \eqref{eq-initial} admits a unique global smooth solution $(u, v)$ satisfying $v>0$ in the region $\tilde\Omega_2$, the image of $\Omega_2$ given in  \eqref{omega} with $R={R}_*$ under the coordinate transformation $F$  defined  in \eqref{F}.
\end{proposition}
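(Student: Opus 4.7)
The plan is to execute a standard continuity (bootstrap) argument in the polar variable $\rho$, patching the local existence result of Lemma \ref{l:local} with the a priori estimates of Proposition \ref{p-priori}. First I would fix $R=R_*$ sufficiently large so that the parameter $\eps=\eps(R_*)$ defined in \eqref{r_1} satisfies $\eps\le\eps_1$, where $\eps_1$ is the small constant supplied by Proposition \ref{p-priori}; by \eqref{eq-limit-krho} and the definition of $\eps$ this is possible. With such a choice of $R_*$, Lemma \ref{l:derivative-boundary} ensures that the boundary data in \eqref{eq-initial} satisfy $|(u_0,v_0)|_2, |(u_i^0,v_i^0)|_2 \le \eps$ with the appropriate decay in $\rho$, so in particular the quantity $Q_0$ in Lemma \ref{l:local} is controlled by $C\eps$.

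Next I would define
\begin{equation*}
\mathcal{S}=\Big\{R'\ge R_*:\, (u,v)\in C^1([R_*,R'];H^2(I(\rho)))^2 \text{ solves \eqref{eq-u-general}-\eqref{eq-v-general} with \eqref{eq-initial},}\,v>0,\,\|(u(\rho),v(\rho))\|_2\le A_0\eps\ \forall\rho\in[R_*,R']\Big\}.
\end{equation*}
The goal is to show $\mathcal{S}=[R_*,\infty)$. Non-emptiness is immediate from Lemma \ref{l:local}, which gives a solution on $[R_*,R_*+\vsi]$ with $\|(u,v)\|_2\le 4Q_0\le A_0\eps$ (this holds because $A_0=(10\Theta)^8$ is much larger than $C_0+1$, see \eqref{A}) and $v>0$. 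Openness of $\mathcal{S}$ follows by reapplying the local existence lemma at $\rho=R'$: the solution can be extended a fixed amount beyond $R'$, and by continuity of $\|(u(\rho),v(\rho))\|_2$ the bound $A_0\eps$ persists on a slightly larger interval. For closedness, I would suppose $R'\in\overline{\mathcal{S}}$ and apply Proposition \ref{p-priori}, which under the hypothesis $\|(u(\rho),v(\rho))\|_2\le A_0\eps$ upgrades the bound to $\|(u(\rho),v(\rho))\|_2\le \tfrac12 A_0\eps$ on the entire interval. This strict improvement, combined with continuity, shows that $\|(u(\rho),v(\rho))\|_2\le A_0\eps$ persists in the limit, and Lemma \ref{l:v-lower-bound} preserves $v>0$. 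Hence $\mathcal{S}$ is closed, and being a nonempty open and closed subset of the connected set $[R_*,\infty)$ it equals the whole interval.

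Having obtained a global $H^2$-solution $(u,v)$ in $\tilde\Omega_2$, the last step is to recover a solution of the original Gauss-Codazzi system. The $H^2$-bound together with the Sobolev embedding on $I(\rho)$ gives $C^{1}$ regularity of $(u,v)$ in $\theta$, and the equations \eqref{eq-u-general}-\eqref{eq-v-general} then provide matching $\rho$-regularity; a standard bootstrap using smoothness of the coefficients $G$, $k$ yields $C^\infty$ regularity. Inverting the linear relation \eqref{uv-1} (resp.\ \eqref{uv-2}) according to whether $\overline K$ is increasing or decreasing, and using $v>0$, one recovers smooth Riemann invariants $w<z$ satisfying \eqref{e:wz}. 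Defining $\tilde L,\tilde M,\tilde N$ via the formulas just before \eqref{e:wz}, we obtain a smooth solution of \eqref{gc1}-\eqref{gs1} in $\tilde\Omega_2$, which pulls back through the coordinate transformation $F$ to a smooth solution of \eqref{gc}-\eqref{gs} in $\Omega_2$.

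The main obstacle in this plan is entirely concentrated in Proposition \ref{p-priori}, which has already been proved. The delicate bookkeeping there---choosing the right pair $(\alpha,\beta)$ depending on whether $\rho^{1+\delta}k_*$ is increasing or decreasing, exploiting the partial damping structure in \eqref{e:damping}/\eqref{e:liner-estimate}, and controlling the non-integrable nonlinearities via \eqref{eq-a-bv}---is exactly what makes the improved bound $\tfrac12 A_0\eps$ possible. Once that proposition is in hand, the continuity argument above is essentially mechanical, and the only remaining care is to ensure that the boundary data estimates from Lemmas \ref{l:boundary-bounds}--\ref{l:boundary-bounds-1} feed correctly into the choice of $\eps(R_*)$ so that the smallness hypothesis $\eps\le\eps_1$ can indeed be met by taking $R_*$ large.
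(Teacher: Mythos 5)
Your proposal is correct and follows essentially the same route as the paper: the authors also set $R_*$ via $\eps_1=\eps(R_*)$, run the continuity argument (nonemptiness and openness from Lemma \ref{l:local}, closedness from the improvement $A_0\eps\mapsto\tfrac12A_0\eps$ in Proposition \ref{p-priori}), and invoke Lemma \ref{l:v-lower-bound} for $v>0$ and Sobolev embedding for regularity. The only difference is that you spell out the open/closed set argument and the final regularity bootstrap in more detail than the paper's brief proof does.
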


\begin{proof}[Proof of Proposition \ref{p:uv}]
 From the local existence in Lemma \ref{l:local} and the {\it a priori} estimates in Proposition \ref{p-priori},  choosing  $R_*$ by  $\eps_1=\eps(R_*)$  (see \eqref{r_1}), we obtain the global existence of \eqref{eq-u-general} and \eqref{eq-v-general} with \eqref{eq-initial} for the solutions $(u,v)$ belonging to $C^1([R, \infty), H^2(I(\rho)))$. The Sobolev embedding implies  $u,\,v\in C^1(\Sigma_\infty)$. Moreover, by Lemma \ref{l:v-lower-bound}, we conclude $v>0$.
\end{proof}

With Proposition \ref{p:uv} established, we are able to obtain the solution of \eqref{e:tilde-p}-\eqref{e:tilde-q} in $\tilde\Omega_2$ with $\tilde q>\tilde p$ by setting
$$(\tilde p,\tilde q)=
\begin{cases}
	\frac12kG(u-v, v+u) &\text{ if }\rho^{1+\delta}k_* \text{ is increasing},\\
	\frac12(u-v, v+u) &\text{ if }\rho^{1+\delta}k_* \text{ is decreasing}.
\end{cases}$$
We will transform $(\tilde{p},\, \tilde{q})$ back to $(p,\, q)$, the solution of \eqref{e:p}-\eqref{e:q} in $\Omega_2$, by the transformation
\[\frac{p}{B}=\tilde F^{-1}_*\Big(\frac{\tilde p}{G}\Big)=\frac{-\theta_t+\frac{\tilde p}{G}\rho_t}{\theta_x-\frac{\tilde p}{ G}\rho_x},\quad
\frac{q}{B}=\tilde F^{-1}_*\Big(\frac{\tilde q}{G}\Big)=\frac{-\theta_t+\frac{\tilde q}{G}\rho_t}{\theta_x-\frac{\tilde q}{G}\rho_x}.
\]
The following lemma ensures that the inverse transformation is applicable.

\begin{lemma}\label{l:transform-2}
	Assume that all the conditions in Theorem \ref{thrm-main1} are fulfilled. Let $(\tilde{p},\, \tilde{q})$ be the solution obtained in Proposition \ref{p:uv} in the domain $\tilde\Omega_2$. Then,
	\begin{equation}\label{e:boundary-tilde-pq}
		\theta_x-\frac{\tilde{p}}{G}\rho_x\geq\frac{B}{2G}, \quad \theta_x-\frac{\tilde{q}}{G}\rho_x\geq\frac{B}{2G}.
	\end{equation}
\end{lemma}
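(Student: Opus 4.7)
The plan is to exploit the explicit formulas \eqref{thetat} to reduce \eqref{e:boundary-tilde-pq} to two separate smallness statements, one about $\Phi$ and one about $|\tilde p|,|\tilde q|$, and then verify each using the geometry of $\Omega_2$ and the global bounds of Proposition \ref{p:uv}, respectively. By \eqref{thetat},
$$\theta_x - \frac{\tilde p}{G}\rho_x = \frac{B}{G}\left(\tanh\Phi + \frac{\xi\tilde p}{\cosh\Phi}\right),\qquad \theta_x - \frac{\tilde q}{G}\rho_x = \frac{B}{G}\left(\tanh\Phi + \frac{\xi\tilde q}{\cosh\Phi}\right),$$
and since $B,G>0$, it suffices to show $\tanh\Phi + \xi\tilde p/\cosh\Phi \geq 1/2$ and the analog for $\tilde q$ throughout $\Omega_2$. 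This is the global counterpart of Lemma \ref{l:transform-1} (which is the boundary version with $p,q$ instead of $\tilde p,\tilde q$). My strategy is to establish (i) $\tanh\Phi \geq 3/4$ and (ii) $|\tilde p|/\cosh\Phi,\,|\tilde q|/\cosh\Phi \leq 1/4$ uniformly on $\Omega_2$, provided $R$ is sufficiently large.

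For (i), I would establish the geometric bound $t/|x| \geq c_1 R$ throughout $\Omega_2$ for a constant $c_1$ independent of $R$. Any $(x,t)\in\Omega_2$ satisfies either $t > t_0(x) = R(1+x^2)^{\mu/2}$ (with $\mu>1$), in which case splitting $|x|\leq 1$ vs.\ $|x|\geq 1$ yields $t/|x|\geq R$; or $\rho(x,t) > R_1$ and $t\leq t_0(x)$ with $|x|\leq b:=\max(|b_-|,|b_+|)$. Here $b$ is bounded independently of $R$: since $t_0(b_\pm)\leq\rho(b_\pm,t_0(b_\pm))=R_1\leq c_0R$, the relation $R(1+b_\pm^2)^{\mu/2}\leq c_0R$ forces $b_\pm\leq\sqrt{c_0^{2/\mu}-1}$. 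In the second case $t\geq R_1 - b \geq R/2$ for $R$ large, hence $t/|x|\geq R/(2b)$. Lemma \ref{l:phi} then delivers $1-\tanh\Phi \leq 2/(c_1R+1)$ and $\cosh\Phi \geq c_1R/2$, giving (i) for $R$ sufficiently large.

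For (ii), I would invoke Proposition \ref{p:uv}: $\|(u,v)(\rho)\|_2 \leq A_0\eps$ uniformly, so Sobolev embedding on the bounded interval $I(\rho)$ yields $|u|_0+|v|_0 \leq CA_0\eps$. In the increasing case, $\tilde p = \tfrac{kG}{2}(u-v)$ gives $|\tilde p|\leq C(\rho k_*)\eps$, which is small because $\rho k_*\to 0$ by \eqref{eq-limit-krho} and $\eps=O(R^{-\delta/2})$; in the decreasing case $\tilde p = (u-v)/2$ gives $|\tilde p|\leq C\eps$ directly. Combined with $\cosh\Phi\geq c_1R/2$ from Step 2, this makes $|\tilde p|/\cosh\Phi \leq 1/4$ for $R$ large, and likewise for $\tilde q$. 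Combining (i) and (ii) gives $\tanh\Phi \pm \xi\tilde p/\cosh\Phi \geq 3/4 - 1/4 = 1/2$, completing the argument. The main obstacle is the geometric case-split establishing $t/|x|\geq c_1 R$ uniformly on $\Omega_2$, which hinges on pinning down that $b$ is bounded independently of $R$; once this is done the rest is routine given Lemma \ref{l:phi} and Proposition \ref{p:uv}.
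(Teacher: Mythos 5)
Your proof is correct and follows essentially the same route as the paper: rewrite the left-hand sides via \eqref{thetat} as $\frac{B}{G}\big(\tanh\Phi\pm\frac{\xi\tilde p}{\cosh\Phi}\big)$, then bound $|1-\tanh\Phi|$ and $|\tilde p|/\cosh\Phi,\ |\tilde q|/\cosh\Phi$ by $O(1/R)$ using Lemma \ref{l:phi} together with $t/|x|\geq R$ and the uniform bound $|\tilde p|,|\tilde q|\leq C$ coming from Proposition \ref{p:uv}. The only superfluous part is your Case 2: the solution of Proposition \ref{p:uv} lives on $\tilde\Omega_2=\{\rho\geq R_1,\ \theta\in I(\rho)\}$, whose preimage under $F$ satisfies $t>t_0(x)$, so your Case 1 already covers the whole domain (and, as a caution, in your Case 2 the assertion $|x|\leq b$ does not actually follow from $\rho>R_1$ and $t\leq t_0(x)$ alone, e.g.\ for points near the $x$-axis far from the origin).
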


\begin{proof}
	By \eqref{thetat}, we have
	\begin{align*}
		\theta_x-\frac{\tilde{p}}{G}\rho_x=\frac{B}{G}\tanh\Phi-\frac{\tilde{p}}{G}\frac{\xi B}{\cosh\Phi}=\frac{B}{G}\left(\tanh\Phi-\frac{\xi\tilde{p}}{\cosh\Phi}\right).
	\end{align*}
	By Proposition \ref{p:uv}, we get $|\tilde{p}|, |\tilde{q}|\leq C$.  Note that
	$\rho\geq R$ and $e^\Phi\geq R+1$ in $\tilde\Omega_2.$ Hence,
	\begin{align*}
		\tanh\Phi-\frac{\xi\tilde{p}}{\cosh\Phi}\geq 1-|1-\tanh\Phi|-\frac{|\tilde{p}|}{\cosh\Phi}\geq1-\frac{C}{R}\geq\frac{1}{2}.
	\end{align*}
	Thus, we have the first inequality in \eqref{e:boundary-tilde-pq}.
	We can verify the second one similarly.
\end{proof}

Finally, we are able to finish the proof of Theorem \ref{thrm-main1} as follows.

\begin{proof}[Proof of Theorem \ref{thrm-main1}]
Let $\Omega_1$ and $\Omega_2$ be given by \eqref{omega}, for $R$ to be determined.
By Lemma \ref{l:omega-1}, there exists a smooth solution $(p,\, q)$ to \eqref{e:p}-\eqref{e:q}
with $q>p$ in $\Omega_1$, for any $R>0$.
By Lemma \ref{l:transform-2}, for $R$ sufficiently large, we obtain the smooth solution
$(p,\, q)=\big(B\tilde F^{-1}_*(\frac{\tilde p}{G}),\, B\tilde{F}^{-1}_*(\frac{\tilde{q}}G)\big)$ of \eqref{e:p}-\eqref{e:q} in the closure of $\Omega_2$ with $q>p$ (since $\tilde q>\tilde p$).
Hence, we have a smooth solution $(p,\, q)$ to \eqref{e:p}-\eqref{e:q} with $q>p$
in $\Omega_1\cup\overline{\Omega}_2=\R_+^2$.
Similarly, we can establish a smooth solution $(p,\, q)$ to \eqref{e:p}-\eqref{e:q} with $q>p$ in $\mathbb R_-^2$ and form a smooth solution in $\mathbb R^2$.  Equivalently, we have a smooth solution to the Gauss-Codazzi system, which yields a smooth isometric immersion of  $(\mathcal{M}, g)$ into $\R^3$ by the fundamental theorem of the surface theory. This completes the proof of Theorem \ref{thrm-main1}.
\end{proof}

\section*{Acknowledgments}
W. Cao was supported in part by the National Natural Science Foundation of China (No. 12471224). 
Q. Han was supported in part by   the National Science Foundation under  grant DMS-2305038.
F. Huang was supported in part by the National Key R\&D Program of China 2021YFA1000800 and the National Natural Science Foundation of China (No. 12288201). 
D. Wang was supported in part by the National Science Foundation under grants DMS-1907519 and DMS-2219384.


\begin{thebibliography}{999}
	\bibitem{ac2005}
	A. Ascanellia, M. Cicognani, Energy estimate and fundamental solution for
	degenerate hyperbolic Cauchy problems,
	{\it J. Differential Equations}, {\bf217}(2005), 305-340.
		
	\bibitem{bgy}
   R. Bryant, P. Griffiths, and D. Yang,
   Characteristics and existence of isometric embeddings, {\it Duke Math. J.} {\bf 50} (1983), 893-994.
	
	
	\bibitem{BS}
	Y. D. Burago, S. Z. Shefel,  The geometry of surfaces in
	Euclidean spaces, Geometry III, 1-85, Encyclopaedia Math. Sci.,
	48, Burago and Zalggaller (Eds.), Springer-Verlag: Berlin, 1992.
	
	\bibitem{CHW}
	W. Cao, F.  Huang  and D. Wang,
	Isometric immersions of surfaces with two classes of metrics and negative Gauss curvature,
	{\it Arch. Ration. Mech. Anal.} {\bf 218} (2015), no. 3, 1431-1457.
	
	\bibitem{CHW1}
	W. Cao, F. Huang and D.  Wang,
	Isometric immersion of surface with  negative Gauss curvature and the Lax-Friedrichs scheme,
	{\it SIAM J. Math. Anal.} {\bf 48} (2016), no. 3, 2227-2249.
	
	\bibitem{CHW-smooth}
	W.  Cao, F.  Huang and D. Wang,
	Isometric immersion of complete surfaces with slowly decaying negative Gauss curvature,
	{\it ArXiv}:1605.09491, 2016.

    \bibitem{CS19}
    W. Cao, L. Sz\'ekelyhidi Jr., Global Nash-Kuiper theroem for compact manifolds, {\it J. Differential Geom.,} {\bf122} (2022), no. 1, 35-68.
	
	\bibitem{Cartan}
	E. Cartan, Sur la possibilit\'e de plonger un espace Riemannian
	donn\'e dans un espace Euclidien,
	{\it Ann. Soc. Pol. Math.} {\bf 6} (1927), 1-7.
	
	\bibitem{CCSWY1}
	G.-Q. Chen, J. Clelland, M. Slemrod, D. Wang, and D. Yang,
	{  Isometric embedding via strongly symmetric positive systems}, {\it Asian J. Math.} {\bf 22} (2018), no. 1, 1-40.
	
	\bibitem{Chen-Li2018} G.-Q. Chen, S. Li,
Global weak rigidity of the Gauss-Codazzi-Ricci equations and isometric immersions of Riemannian manifolds with lower regularity,
{\it J. Geom. Anal.}  {\bf 28} (2018), no. 3, 1957-2007.

	\bibitem{CSW}
	G.-Q. Chen, M. Slemrod and D.  Wang,
	{  Isomeric immersion and compensated compactness}, {\it Commun. Math. Phys.} {\bf 294} (2010), 411-437.
	
	\bibitem{Christoforou}
	C. Christoforou,
	BV weak solutions to Gauss-Codazzi system for isometric immersions,
	{\it J. Differential Equations} {\bf 252} (2012), 2845-2863.
	
	\bibitem{cs} C. Christoforou, M. Slemrod, Isometric immersions via compensated compactness for slowly decaying negative Gauss curvature and rough data,
     {\it Z. Angew. Math. Phys.} {\bf  66} (2015),   3109-3122.
	
	\bibitem{CDS12}
 S. Conti,  C., De Lellis, and L. Sz\'ekelyhidi Jr,  $h$-principle and rigidity for
$C^{1, \alpha}$ isometric embeddings, {\it Nonlinear partial differential equations, The Abel Symp. 2010}, Springer-Verlag, 2012, pp. 83-116

	
	%\bibitem{Codazzi}
	%D. Codazzi, Sulle coordinate curvilinee duna superficie dello
	%spazio, {\it Ann.\ Math.\ Pura Applicata,} { \bf 2} (1860),  101-119.
	
	\bibitem{Carmo}
	M. P. do Carmo, {Riemannian geometry.} Translated from
	the second Portuguese edition by Francis Flaherty, Mathematics: Theory \&
	Applications. Birkh\"{a}user Boston, Inc., Boston, MA, 1992.
	
	\bibitem{CS-acta-82}
	F. Colombini, S. Spagnolo,
	An example of a weakly hyperbolic Cauchy problem not well posed in $C^\infty$, {\it Acta Math.} {\bf148} (1982), 243-253.

   \bibitem{dd01}
   P. D'Ancona, M. Di Flaviano, On a weakly hyperbolic quasilinear mixed problem of second order,
   {\it Ann. Scuola Norm. Sup. Pisa Cl. Sci.(4)} {\bf 30} (2001), no. 2, 251-267.
	
    \bibitem{DI20}
C. De Lellis, D. Inauen, $C^{1,\alpha}$ Isometric embeddings of polar caps,  {\it Adv. Math.}
{\bf 363} (2020), 106996, 39 pp.
%{\it arXiv preprint} {1809.04161v1} (2018)


\bibitem{DIS15}
C. De Lellis, D. Inauen, and L. Sz\'ekelyhidi Jr. A Nash-Kuiper theorem for $C^{1, 1/5-\delta}$ immersions on surfaces in 3 dimensions, {\it Rev. Mat. Iberoamericana} {\bf34} (2018), 1119-1152.
	
	\bibitem{Dong}
	G.-C. Dong, The semi-global isometric imbedding in $\R^3$ of two-dimensional Riemannian manifolds with Gauss curvature changing sign cleanly,
	{\it J. Partial Differential Equations} {\bf 6} (1993), 62-79.

\bibitem{dr97}	
M. Dreher, M. Reissig, Weakly hyperbolic equations-a modern field in the theory of hyperbolic equations,  Partial differential and integral equations (Newark, DE), 1997, 303–318,  Int. Soc. Anal. Appl. Comput., 2, Kluwer Acad. Publ., Dordrecht, 1999.

	\bibitem{Efimov1}
	N. V. Efimov, Generation of singularities on surfaces of negative curvature (Russian),  {\it Mat. Sb. (N.S.)} {\bf64} (106) (1964), 286-320.
	
	\bibitem{Efimov2}
	N. V. Efimov, Surfaces with slowly varying negative curvature,
	{\it Russian Math. Survey} {\bf 21} (1966), 1-55.

    \bibitem{evans}
    L.C. Evans, Partial differential equations. Graduate Studies in Mathematics 19,  American Mathematical Society, Providence, RI, 1998. xviii+662 pp.
     
    % \bibitem{figke}
    % A. Figalli, C. Kehle, On the prescribed negative Gauss curvature problem for graphs. 
    % {\it Discrete Contin. Dyn. Syst.} {\bf 43} (2023), no. 3-4, 1420-1435.

    \bibitem{fried}
    K.O. Friedrichs, Symmetric hyperbolic linear differential equations, {\it Comm. Pure Appl. Math.} {\bf7} (1954), 345-392.
	
	\bibitem{GY}
	J. B. Goodman,  D. Yang,
	{Local solvability of nonlinear partial differential equations of real principal type},
	Preprint, 1988.
	
	\bibitem{gromov70} M. Gromov, V.A. Rokhlin,  Embeddings and immersions in Riemannian geometry,
	{\it Uspekhi Mat. Nauk.} {\bf 25} (1970), no. 5, 3-62; Russian Math. Survey, 25(1970), no. 5, 1-57.

    \bibitem{gromov86} M. Gromov, Partial Differential Relations, {Springer-Verlage, Berlin Heidelberg}, 1986.
	
	\bibitem{GuLi}
	C. Gu, T. Li, S. Chen, S. Zheng, and Y. Tan, Equations in Mathematics and Physics, Higher Education Press, 2016, In Chinese.
	
	\bibitem{Guan}
	B. Guan, P. Guan,
	Convex hypersurfaces of prescribed curvatures, {\it Ann. of Math. (2)}, {\bf 156} (2002), no. 2, 655-673.
	
	\bibitem{gl} P. Guan, Y. Li, The Weyl problem with nonnegative Gauss curvature, {\it J. Diff. Geometry}
	{\bf 39} (1994), 331-342.
	
   \bibitem{Gun89}
    M. G\"unther, On the perturbation problem associated to isometric embeddings of Riemannian manifolds, {\it Ann. Global Anal. Geom.} {\bf 7} (1989), no. 1, 69-77.

   \bibitem{Gun90}
   M. G\"unther, Isometric embeddings of Riemannian manifolds, Proceedings of the International Congress of Mathematicians (Kyoto, 1990), Mathematical Society of Japan, 1991, pp. 1137-1143. Math. Soc. Japan, Tokyo, 1991.

	\bibitem{Han}
	Q. Han,  On isometric embedding of surfaces with Gauss curvature changing
	sign cleanly, {\it Comm. Pure Appl. Math.} { \bf 58} (2005), 285-295.

     \bibitem{han10} Q. Han, Energy estimates for a class of degenerate hyperbolic equations, {\it Math. Ann.} {\bf 347} (2010), 339-364.
	
	\bibitem{HH}
	Q. Han, J.-X. Hong,   Isometric embedding of Riemannian manifolds in Euclidean
	spaces. Providence, RI: Amer. Math. Soc., 2006.
	
	\bibitem{hhl}   Q. Han, J.-X. Hong and C.-S. Lin,
	Local isometric embedding of surfaces with nonpositive Gauss curvature, {\it J. Differential Geom.} {\bf  63} (2003), 475-520.
	
	\bibitem{HK1}
	Q. Han, M.A. Khuri, On the local isometric embedding in $\R^3$ of surfaces with Gaussian curvature of mixed sign,
    {\it Comm. Anal. Geom.} {\bf 18} (2010), no. 4, 649-704.
	
	\bibitem{HK2}
	Q. Han, M.A. Khuri, The linearized system for isometric embeddings and its characteristic variety, {\it Adv. Math.} {\bf 230} (2012), no. 1, 263-293.
	
	\bibitem{hw} P. Hartman, P. Winter,  Gauss curvature and local embedding, {\it Amer. J. Math.} {\bf 73} (1951), 876-884.
	
	\bibitem{he} E. Heinz,  On Weyl's embedding problem, {\it J. Math. Mech.} {\bf  11} (1962), 421-454.
	
	\bibitem{hi} D. Hilbert,  Ueber flachen von constanter Gausscher Krummung, {\it Trans. Amer. Math. Soc.}
	{\bf 2} (1901), 87-99.
	
%	\bibitem{H-07}
%	F. Hirosawa,
%	On the asymptotic behavior of the energy for the wave equations with time depending coefficients. {\it Math. Ann.} {\bf339}(2007), no. 4, 819-838.
	
	
	\bibitem{H}
	J.-X. Hong,   Realization in $\mathbb{R}^3$ of complete Riemannian manifolds
	with negative curvature, {\it Commun. Anal. Geom.} {\bf 1} (1993), 487-514.

    \bibitem{h-private}
    J.-X. Hong, Private communication, 2011.
	
	\bibitem{hz} J.-X. Hong, C. Zuily, Isometric embedding of the 2-sphere with nonnegative curvature in $\mathbb{R}^3$, {\it Math. Z.} {\bf 219} (1995), 323-334.
	
	
	\bibitem{Janet}
	M. Janet, Sur la possibilit\'e de plonger un espace Riemannian
	donn\'e dans un espace Euclidien,
	{\it  Ann. Soc. Pol. Math.} {\bf 5} (1926),
	38-43.
	
	\bibitem{Khuri}
	M.A.  Khuri, The local isometric embedding $\R^3$ of two-dimensional Riemannian manifolds with Gaussian curvature changing sign to finite order on a curve,
	 {\it J. Differential Geom.} {\bf 76} (2007), no. 2, 249-291.

\bibitem{Kui55}
N. Kuiper. On $C^1$ isometric embeddings I, II, {\it Proc. Kon. Acad. Wet. Amsterdam A} {\bf 58} (1955), 545-556, 683-689.

	\bibitem{LiS2020}  S. Li,
On the existence of $C^{1,1}$-isometric immersions of several classes of negatively curved surfaces into $\R^3$,
{\it Arch. Ration. Mech. Anal.} {\bf 236} (2020), no. 1, 419-449.
	
	\bibitem{Lin0} C.-S. Lin,  The local isometric embedding in $\R^3$ of 2-dimensional Riemannian manifolds
	with nonnegative curvature, {\it J. Diff. Geometry} {\bf  21} (1985), 213-230.
	
	\bibitem{Lin}
	C.-S. Lin, The local isometric embedding in $\R^3$ of 2-dimensional Riemannian
	manifolds with Gaussian curvature changing sign cleanly,
	{\it Comm. Pure Appl. Math.} {\bf 39} (1986), 867-887.
	
	
	\bibitem{Mainardi}
G. Mainardi,  { Su la teoria generale delle superficie},  {\it Giornale
dell' Istituto Lombardo} {\bf9} (1856), 385--404.

\bibitem{M1}
S. Mardare, The foundamental theorem of theorey for surfaces with little regularity,
{\it J. Elasticity} {\bf 73} (2003) 251-290.

%\bibitem{M2}
%S. Mardare, On Pfaff systems with $L^p$ coefficients and their applications in
%differential geometry.
%{\it J. Math. Pure Appl.} { 84}(2005), 1659-1692.
	
	
	
	\bibitem{NM}
	G. Nakamura, Y. Maeda,  {Local isometric embedding problem of Riemannian $3$-manifold into $\R\sp 6$},
	{\it Proc. Japan Acad. Ser. A Math. Sci.} {\bf62} (1986), no. 7, 257-259.
	
	\bibitem{NM2}
	G. Nakamura, Y.  Maeda, {Local smooth isometric embeddings of low-dimensional Riemannian manifolds into Euclidean spaces}, {\it Trans. Amer. Math. Soc.}
   {\bf 313}(1989), no. 1, 1-51.
	
	\bibitem{nash1954}
	J. Nash, $C^1$ isometric imbeddings,
	{\it Ann. of Math.} (2) {\bf60} (1954), 383-396.
	
	
	\bibitem{nash1956}
	J. Nash, The imbedding problem for Riemannian manifolds, {\it Ann. of Math.}
	(2) {\bf 63} (1956), 20-63.
	
	\bibitem{n} L. Nirenberg, The Weyl and Minkowski problems in differential geometry in the large,
	{\it Comm. Pure Appl. Math.} {\bf 6} (1953), 337-394.

    \bibitem{ol70}
	O. A. Ole\v{i}nik, On the Cauchy problem for weakly hyperbolic equations, {\it Commun. Pure Appl. Math.} {\bf23} (1970), 569-586.
	%\bibitem{Peterson}
	% K. M.  Peterson,  \"Uber die Biegung der Fl\"achen,
	% {Dorpat. Kandidatenschrift} 1853.
	
	\bibitem{Poole}
	T. E. Poole, {The local isometric embedding problem for 3-dimensional Riemannian manifolds
		with cleanly vanishing curvature},
	{\it Comm. in  Partial Differential Equations} {\bf35} (2010), 1802-1826.
	
	\bibitem{PS}
	\`{E}. G. Poznyak,  E. V.  Shikin,  Small parameters in the
	theory of isometric imbeddings of two-dimensional Riemannian
	manifolds in Euclidean spaces. In:  Some Questions of
	Differential Geometry in the Large, {Amer. Math. Soc. Transl. Ser.}
	2, { \bf 176} (1996), 151-192, AMS: Providence, RI.
	
%	\bibitem{RS-05}
%	M. Reissig, J. Smith, $L^p �CL^q$ estimate for wave equation with bounded time dependent coefficient. {\it Hokkaido Math. J.} {\bf 34}(2005), 541�C586.
%	
%	
%	\bibitem{RY-os-99}
%	M. Reissig, K. Yagdjian, Weakly hyperbolic equation with fast oscillating coefficients. {\it Osaka J. Math.}  {\bf36} (1999), no. 2, 437-464.
%	
%	\bibitem{RY-00}
%	M. Reissig, K. Yagdjian,
%	About the influence of oscillations on Strichartz-type decay estimates. {\it Partial differential operators (Torino, 2000).
%		Rend. Sem. Mat. Univ. Politec. Torino} {\bf 58} (2000), no. 3, 375-388 (2002).
	
	\bibitem{Rozh}
	B. Rozhdestvensk\u\i, Quasilinear hyperbolic systems for theory of surfaces, {\it Dokl. Akad. Nauk.
	SSSR (N.S.)}, {\bf143} (1962), 50-52; {\it Soviet Math. Dokl.}, {\bf3}(1962), 351-353.
	
	\bibitem{Smoller} J. Smoller, Shock Waves and Reaction-Diffusion Equations, Springer, NewYork, 1994.
    \bibitem{sz}  M. Szopos, An existence and uniqueness result for isometric
immersions with little regularity, {\it Rev. Roumaine Math. Pures Appl.} {\bf53} (2008), no. 5-6, 555–565
	
	\bibitem{w} H. Weyl, Uber die Bestimmheit einer geschlossenen konvex Flache durch ihr Linienelement,
	{\it Vierteljahresschrift der nat.-Forsch.  Ges. Zurich} {\bf 61} (1916), 40-72.

    \bibitem{Wh44}
H. Whitney, The self-intersections of a smooth $n$-manifold in $2n$-space,
{\it Ann. of Math. (2)}  {\bf 45} (1944), 220-246.
	
	\bibitem{Yau}
	S.-T. Yau, Seminar on Differential Geometry, {Princeton University Press,} 1982.
	
%	\bibitem{Y-80}
%	K. Yagdjian, On correctness of the Cauchy problem for non-strictly hyperbolic equations, {\it Soviet J. of Contemporary Math. Anal.} {\bf15} (1980) 6, 54-65.
\end{thebibliography}
\end{document}